\pgfplotsset{compat=newest}
\pgfplotsset{every axis legend/.append style={%
cells={anchor=west}}
}
\tikzset{>=stealth'}
\theoremstyle{plain}
\newtheorem{thm}{Theorem}[section]
\newtheorem{lemma}[thm]{Lemma}
\newtheorem{prop}[thm]{Proposition}
\theoremstyle{definition}
\theoremstyle{remark}
\newtheorem{remark}{Remark}
\newcounter{claimCount}
\newcommand{\N}{\mathbb{N}}
\newcommand{\Q}{\mathbb{Q}}
\newcommand{\R}{\mathbb{R}}
\newcommand{\Z}{\mathbb{Z}}
\newcommand{\SOS}{\textsf{SOS}}
\newcommand{\QQ}{\mathbf{Q}}
\newcommand{\E}{\mathbb{E}}
\DeclareMathOperator{\Var}{Var}
\DeclareMathOperator{\Ent}{Ent}
\newcommand{\ideal}[2]{\mathbf{I}_{#2}(#1)}
\newcommand{\abs}[1]{\left\lvert #1\right\rvert}
\newcommand{\norm}[1]{\left\lVert #1\right\rVert}
\DeclarePairedDelimiterX{\inp}[2]{\langle}{\rangle}{#1, #2}
\newcommand{\1}{\mathds{1}}
\newcommand{\inv}{^{-1}}
\newcommand{\st}{\text{ s.t. }}
\DeclareMathOperator{\id}{id}
\DeclareMathOperator{\relint}{relint}
\DeclareMathOperator{\rank}{rank}
\DeclareMathOperator{\linspan}{span}
\newcommand{\cX}{\mathcal{X}}
\renewcommand{\S}{\mathbf{S}} 
\newcommand{\smat}{\texttt{VecToSymMat}}
\newenvironment{sm}{\left(\begin{smallmatrix}}{\end{smallmatrix}\right)}
\newcommand{\lambdamin}{\lambda_{\text{min}}}
\newcommand{\RR}{\mathbb{R}}
\newcommand{\ZZ}{\mathbb{Z}}
\newcommand{\cE}{\mathcal{E}}
\newcommand{\psd}{\succeq}
\newcommand{\pd}{\succ}
\newcommand{\trace}{\text{trace}}
\newcommand{\cA}{{\mathcal A}}
\tikzstyle{node}=[fill=black, draw=black, shape=circle]
\tikzstyle{right}=[->, >=latex]
\tikzstyle{grey edge}=[-, draw={rgb,255: red,128; green,128; blue,128}, fill={rgb,255: red,128; green,128; blue,128}]
\newcommand{\repourl}{\url{https://github.com/oisinfaust/LogSobolevRelaxations}}
\newcommand{\proofsurl}{\url{https://github.com/oisinfaust/LogSobolevRelaxations/releases/download/v0.1.0/proofs.zip}}
\numberwithin{equation}{section}
\title{Sum-of-Squares proofs of logarithmic Sobolev inequalities on finite Markov chains}
\author{Ois{\'i}n~Faust$^1$ \and Hamza~Fawzi$^1$}
\begin{document}

\maketitle

\begin{abstract}
    Logarithmic Sobolev inequalities are a fundamental class of inequalities that play an important role in information theory. They play a key role in establishing concentration inequalities and in obtaining quantitative estimates on the convergence to equilibrium of Markov processes. More recently, deep links have been established between logarithmic Sobolev inequalities and strong data processing inequalities.    In this paper we study logarithmic Sobolev inequalities from a computational point of view. We describe a hierarchy of semidefinite programming relaxations which give certified lower bounds on the logarithmic Sobolev constant of a finite Markov operator, and we prove that the optimal values of these semidefinite programs converge to the logarithmic Sobolev constant.    Numerical experiments show that these relaxations are often very close to the true constant even for low levels of the hierarchy.    Finally, we exploit our relaxation to obtain a sum-of-squares proof that the logarithmic Sobolev constant is equal to half the Poincar\'e constant for the specific case of a simple random walk on the odd $n$-cycle, with $n\in\{5,7,\dots,21\}$. Previously this was known only for $n=5$ and even $n$.
\end{abstract}


\section{Introduction}\label{sec:intro}

Consider a Markov kernel $K:\cX \times \cX \rightarrow \R_+$ on a finite state space $\cX$, that is, a matrix satisfying
\[
\begin{aligned}
& K_{ij} \geq 0 \quad \forall\, i,j \in \cX\\
& \sum_{j \in \cX} K_{ij} = 1 \quad \forall \,i \in \cX.
\end{aligned}
\]
Together with an initial distribution $\mu_0$ on $\cX$, $K$ defines a continuous-time Markov chain on $\cX$ which evolves according to $\mu_t=P_t\mu_0$, where $P_t$ is the Markov semigroup $P_t = e^{tL}=\sum_{m=0}^\infty \frac{t^m L^m}{m!}$, and $L = K-I$ is the \emph{Laplacian operator}.
 Assuming irreducibility of the Markov kernel, the chain converges as $t\rightarrow \infty$ to the unique stationary distribution $\pi$ satisfying $\pi K = \pi$, and moreover $\pi_i>0\;\forall i\in\cX$. 
Understanding the \emph{mixing time} of this chain, i.e. how fast $\mu_t$ converges to $\pi$, is a subject that has attracted a significant amount of attention in the areas of probability theory, information theory, and dynamical systems. The $L^2$ mixing time can be defined as
\[\tau_2(\epsilon) = \inf\{t>0\st \sup_{\mu_0}\norm{(d\mu_t/d\pi)-1}_{2,\pi}\le \epsilon\}\]
where the supremum is taken over all initial probability distributions $\mu_0$ on $\cX$, and where $\norm{x}_{2,\pi} := \sqrt{\E_\pi[x^2]}$ for $x\in\R^\cX$. By $d\mu_t/d\pi$ we simply mean the density of $\mu_t$ with respect to $\pi$.

\paragraph{The logarithmic Sobolev constant} One way to estimate the mixing time $\tau_2$ is via \emph{logarithmic Sobolev inequalities} \cite{P.Diaconis1996}, which we recall now.
For $x\in\R_+^\cX$, $x\neq0$, we define
\[\Ent_\pi x= \E_\pi[x\log x] - \E_\pi [x]\log\E_\pi[x]= \sum_{i\in\cX}\pi_ix_i\log\left(\frac{x_i}{\E_\pi[x]}\right),\]
where $\E_\pi[x]=\sum_{i\in\cX}\pi_ix_i$. This quantity measures discrepancy between $\pi$ and the probability distribution $\sigma$ whose density with respect to $\pi$ is $x$. 
In fact, when $\E_\pi[x]=1$, $\Ent_\pi x = D(\sigma\Vert \pi)$ is the relative entropy between $\pi$ and $\sigma_i=\pi_ix_i$. For $x,y\in\R^\cX$, we define the \emph{Dirichlet form} corresponding to $K$
\[ \mathcal{E}_K(x,y) = -\inp{x}{Ly}_\pi = -\sum_{i,j}\pi_i x_i(Ly)_i,\]
in terms of the inner product $\inp{x}{y}_\pi=\E_\pi[xy]=\sum_{i\in\cX}\pi_i x_i y_i$.
When it is clear which kernel $K$ is under consideration we may simply write $\mathcal{E}$.
The Dirichlet form is bilinear, and the quadratic form it induces, 
\[\mathcal{E}(x,x) = -\inp{x}{Lx}_\pi = \frac{1}{2}\sum_{i,j}\pi_iK_{ij}(x_i-x_j)^2\]
is positive semidefinite. 
Since $K$ is irreducible, $\mathcal{E}(x,x)$ vanishes if and only if $x$ is constant. A \emph{logarithmic Sobolev inequality} has the form
\begin{equation}\label{eq:lsineq}
    \cE(x,x) \ge \alpha\Ent_\pi[x^2]\qquad\forall\,x\in\R^\cX \ ;
\end{equation}
the \emph{logarithmic Sobolev constant} of the semigroup of the Markov kernel $K$ is the largest constant $\alpha$ satisfying \eqref{eq:lsineq}.
The $L^2$ mixing time of the chain can then be bounded in terms of $\alpha$ as follows \cite[Theorem 3.7]{P.Diaconis1996}
\begin{equation}
    \label{eq:alphabound}
    \tau_2(1/e) \leq \frac{1}{\alpha}\left(1+\frac{1}{2}\log\log(\pi_*^{-1})\right)
\end{equation}
where $\pi_* = \min_{i \in \cX} \pi_i$. 

\paragraph{Poincar\'e inequality} A similar, but older, tool than the logarithmic Sobolev inequality is the \emph{Poincar\'e inequality}
\begin{equation}
    \label{eq:poincareineq}
    \cE(x,x) \ge \lambda\Var_\pi[x] \qquad\forall\,x\in\R^\cX \ , 
\end{equation}
where $\Var_\pi[x] := \E_\pi[x^2] - \E_\pi[x]^2$. The \emph{Poincar\'e constant} of the Markov kernel $K$ is the largest constant $\lambda$ satisfying \eqref{eq:poincareineq}.
A classical result is that $\lambda\ge2\alpha$, see e.g., \cite[Lemma 3.1]{P.Diaconis1996}.
The Poincar\'e constant is sometimes known as the \emph{spectral gap} and, if the semigroup is reversible, it is equal to the second smallest eigenvalue of $-L$ (note that the smallest eigenvalue of $-L$ is 0).
Using the Poincar\'e constant, one can obtain the following bound on the mixing time \cite[Corollary 1.6]{Montenegro2006}
\begin{equation}
\label{eq:poincarebound}
\tau_2(1/e) \leq \frac{1}{\lambda}\left(1+\frac{1}{2}\log(\pi_*^{-1})\right).
\end{equation}

The main advantage of the bound \eqref{eq:alphabound} compared to \eqref{eq:poincarebound} is the dependence on $\pi_*$ which is exponentially smaller in \eqref{eq:alphabound} vs. \eqref{eq:poincarebound}. 
So if $\frac{\alpha}{\lambda}\gg \frac{2+\log\log(\pi_*^{-1})}{2+\log(\pi_*^{-1})}$, the bound based on the logarithmic Sobolev constant will be much better than the one based on the Poincar\'e constant. A typical example is the simple random walk on the hypercube $\cX=\{-1,+1\}^n$ where $\lambda=2/n$ and $\alpha=1/n$, while $\pi_*^{-1} = 2^n$. So for this example, the $L^2$ mixing time bound based on $\alpha$ is better than the bound based on $\lambda$ by a factor of approximately $n/\log_2{n}$.

\paragraph{Strong data processing inequalities}
Logarithmic Sobolev inequalities arise not only in the study of mixing times of Markov chains, but are also connected to the study of strong data processing inequalities for memoryless channels. It can be shown (see \cite{Raginsky13, Raginsky16}) that a strong data processing inequality is implied by a logarithmic Sobolev inequality on a certain Markov kernel derived from the channel. In Section \ref{sec:sdpi} we give more details on this connection, and sketch how our methods can be modified to  prove strong data processing inequalities directly.

\subsection{Contributions}

In this paper we study the problem of \emph{computing} the logarithmic Sobolev constant $\alpha$ of a given Markov kernel $K$. Whereas computing the Poincar\'e constant of $K$ reduces to an eigenvalue problem which can be solved using standard algorithms, computing the constant $\alpha$ seems much harder. One can read in Saloff-Coste's lecture notes on finite Markov chains \cite[Section 2.2.2]{Saloff-Coste1997}: ``\textit{The natural question now is: can one compute or estimate the constant $\alpha$? Unfortunately, the present answer is that it seems to be a very difficult problem to estimate $\alpha$}''. Indeed $\alpha$ is defined in terms of a nonconvex optimization problem, and finding the exact constant $\alpha$ for small chains can require very lengthy and technical computations, see e.g., \cite{Chen2008}.

In this work we propose a new method to compute accurate and certified lower bounds on the logarithmic Sobolev constant of a Markov chain $K$. We make use of the powerful \emph{sum of squares} framework for global optimisation. Whereas sum of squares techniques are most directly applied to \emph{polynomial} inequalities, we show how this framework can be brought to bear on entropy-based functional inequalities by means of rational Pad\'e approximants to the logarithm.
For integers $d\ge2$, we describe a semidefinite program whose solution $\alpha_d$ is a lower bound on $\alpha$. For fixed $d$ the size of the semidefinite program grows polynomially in $\abs{\cX}$. The main technical result of the paper can be summarized in the following:

\begin{thm}\label{thm:hierarchy-main-intro}[see detailed version in Theorem \ref{thm:hierarchy-main}]
    Let $\mathcal{E},\,\pi$ be the Dirichlet form and stationary distribution of an irreducible Markov chain on $\cX=\{1,\dots,n\}$, with log-Sobolev constant $\alpha$. For each $d \geq 2$, there is a number $\alpha_d \leq \alpha$ such that the following is true:\\
    (i) $\alpha_d$ can be computed with a semidefinite program of size $O(n^{2d})$\\
    (ii) $\alpha_{d} \uparrow \alpha$ as $d\uparrow\infty$.
\end{thm}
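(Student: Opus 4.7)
The plan is to replace the logarithm inside $\Ent_\pi[x^2]$ by a rational Pad\'e approximant, converting the log-Sobolev inequality into a polynomial inequality amenable to SOS techniques. Since both $\cE(x,x)$ and $\Ent_\pi[x^2]$ are positively homogeneous of degree $2$ in $x$, I first reduce to the normalized problem on the compact semialgebraic set $\{x\in\R^\cX : \E_\pi[x^2]=1\}$, on which $\Ent_\pi[x^2]=\sum_{i\in\cX}\pi_i x_i^2 \log(x_i^2)$ and each $x_i^2$ lies in the bounded interval $[0,1/\pi_*]$.

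For each $d\ge 2$, the plan is to choose a rational function $R_d(u)=p_d(u)/q_d(u)$ of degree $O(d)$ with $R_d(u)\ge \log u$ on $(0,1/\pi_*]$ (for instance, an appropriate diagonal Pad\'e approximant of $\log$ around $u=1$, with sign of error controlled by Stieltjes-type arguments), and such that $R_d\to \log$ uniformly on compact subsets of $(0,\infty)$, with $u R_d(u)\to u\log u$ uniformly on $[0,1/\pi_*]$. Define $\alpha_d$ to be the largest $\alpha$ for which
\[
\cE(x,x)\;\ge\;\alpha\sum_{i\in\cX}\pi_i x_i^2 R_d(x_i^2)\qquad\text{on}\qquad \{x\in\R^\cX : \E_\pi[x^2]=1\}
\]
admits an SOS certificate of a prescribed degree $O(d)$. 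Since $R_d\ge \log$ pointwise and $\pi_i x_i^2\ge 0$, the right-hand side upper-bounds $\alpha\Ent_\pi[x^2]$, so the inequality implies the log-Sobolev inequality and hence $\alpha_d\le \alpha$. To handle the rational objective inside the polynomial SOS framework, introduce auxiliary variables $z_i$ for $i\in\cX$ with equality constraints $z_i q_d(x_i^2)=p_d(x_i^2)$, yielding a polynomial optimisation in $O(n)$ variables on a basic semialgebraic set. Applying a Putinar-type Positivstellensatz to the quadratic module generated by these constraints --- which is Archimedean thanks to the uniform bound $x_i^2\le 1/\pi_*$ --- produces an SDP whose SOS matrices are indexed by monomials of degree $\le d$ in $O(n)$ variables, giving matrix dimension $O(n^d)$ and total SDP size $O(n^{2d})$; this establishes~(i).

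For~(ii), monotonicity $\alpha_d\le \alpha_{d+1}$ is arranged by choosing the Pad\'e sequence so that $R_{d+1}(u)\le R_d(u)$ pointwise on $[0,1/\pi_*]$, which tightens the surrogate entropy as $d$ grows; coupled with the fact that SOS certificates at level $d$ are feasible at level $d+1$, this gives $\alpha_d\uparrow$. For the limit, fix any $\beta<\alpha$; then by compactness and the strict inequality $\cE(x,x)\ge \alpha\Ent_\pi[x^2]$ there is some margin $\cE(x,x)-\beta\Ent_\pi[x^2]\ge c>0$ on the normalized set (away from constants). Uniform convergence $u R_d(u)\to u\log u$ on $[0,1/\pi_*]$ then implies $\cE(x,x)-\beta\sum_i\pi_i x_i^2 R_d(x_i^2)>0$ for all sufficiently large $d$, and Putinar produces an SOS certificate of (some) bounded degree, so $\alpha_d\ge \beta$ eventually. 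The singularity of $\log$ at $0$ poses no difficulty since $u\log u$ and $u R_d(u)$ both vanish at $u=0$ and the Pad\'e construction can be arranged so that $uR_d(u)$ approximates $u\log u$ uniformly across $[0,1/\pi_*]$.

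The principal obstacles are (a)~constructing $R_d$ with simultaneously the pointwise upper bound on $(0,1/\pi_*]$, monotone convergence from above, controlled degree, and correct endpoint behaviour at $0$ --- standard Pad\'e approximants alternate the sign of their error in ways that may require explicit modification (e.g.\ an added tangent-line term at $u=1$); and (b)~reconciling the finite-level Positivstellensatz with the degree bound: while Putinar's theorem produces \emph{some} SOS certificate, matching its degree to the Pad\'e order so as to preserve the $O(n^{2d})$ SDP size may require either a quantitative Positivstellensatz tailored to this quadratic module or an explicit construction of certificates exploiting the structure of $\cE$ and $R_d$.
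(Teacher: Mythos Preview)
Your overall strategy --- normalize to $\{\E_\pi[x^2]=1\}$, replace $\log$ by a one-sided Pad\'e approximant, and solve an SOS relaxation --- is the same as the paper's. But there is a genuine gap in step~(ii), at the point where you invoke Putinar's Positivstellensatz.

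You write that for $\beta<\alpha$ there is a margin $\cE(x,x)-\beta\Ent_\pi[x^2]\ge c>0$ ``away from constants'', and then that uniform convergence gives strict positivity of $\cE(x,x)-\beta\sum_i\pi_ix_i^2R_d(x_i^2)$, whence Putinar yields a certificate. This fails precisely at the constant vector $x=\1$: both $\cE(\1,\1)=0$ and $\sum_i\pi_i\cdot 1\cdot R_d(1)=R_d(1)=0$ (any Pad\'e approximant of $\log$ at $1$ vanishes there), so the surrogate inequality is an \emph{equality} at $\1$ for every $d$. The polynomial you want to certify is therefore nonnegative but not strictly positive on the normalized set, and Putinar does not apply. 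Your parenthetical ``away from constants'' acknowledges the point but does not resolve it; the same obstruction persists in the lifted formulation with auxiliary variables $z_i$.

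The paper deals with this by avoiding Putinar altogether and instead applying a boundary-Hessian Positivstellensatz due to Nie (Theorem~\ref{thm:nie}), which gives an SOS representation for a nonnegative polynomial provided the second-order \emph{sufficient} optimality conditions hold at each zero. Verifying those conditions at $x=\1$ requires a further idea: one cannot take $\gamma=1/\beta$ directly, since the second-order conditions may fail. The paper takes $P$ with $\Ent_\pi[x^2]\le P(x)\le(1+\epsilon)\Ent_\pi[x^2]$ and considers $f_1(x)=\frac{1}{(1-\epsilon)\alpha}\cE(x,x)-P(x)$; the extra $\frac{2\epsilon}{(1-\epsilon^2)\alpha}\cE(x,x)$ slack, combined with irreducibility of the chain, forces the Hessian to be positive definite on the tangent space at $\1$ (see the proof of Theorem~\ref{hierarchy-convergence}). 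A secondary difference is architectural: rather than introducing auxiliary variables to encode $R_d$, the paper lets the polynomial upper bounds $p_i\in\R[t]_{2d}$ themselves be decision variables and enforces $p_i(t)\ge 2t^2\overline{r_m}(t)$ via a separate \emph{univariate} SOS constraint after clearing denominators; this keeps the multivariate SOS block at size $O(n^{2d})$ while the Pad\'e degree $m$ only enters additively through small univariate blocks.
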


\subsection{Overview of approach} Our approach relies on semidefinite programming, and uses the sum of squares paradigm to prove inequalities. Sum of squares programming is a general method to search for proofs of polynomial inequalities \cite{Lasserre2001,parrilo2003semidefinite}. The sum of squares method cannot, however, be used directly on \eqref{eq:lsineq} as the right-hand side involves a nonpolynomial function. To circumvent this, we search for stronger inequalities where the term $\Ent_\pi[x^2]$ is upper bounded by a polynomial function. Some simple upper bounds based on Taylor expansions can be used and already yield nontrivial bounds. However because of the poor approximation properties of Taylor expansions (finite radius of convergence), these methods are not convergent in general and cannot yield the kind of guarantee of Theorem \ref{thm:hierarchy-main-intro}. The main novelty in our work is that we consider instead rational Pad\'e approximants which converge pointwise to $\log$ everywhere on the positive line, with an exponential convergence rate on any compact set. This choice of rational bounds, instead of Taylor expansions, is key to obtain the convergence result of Theorem \ref{thm:hierarchy-main-intro}.

In addition to the stated convergence result, numerical experiments (see Section \ref{sec:numerics}) show that these relaxations are often very close to the true constant, even for low levels of the hierarchy. In fact, as shown in Section \ref{sec:cycle} for the $n$-cycle, the relaxation can even yield the exact value $\alpha$ already for small values of $d$. If the entries of the kernel $K$ are rational, we further explain how to turn the output of the semidefinite program into a formally verified bound on $\alpha$ with rational arithmetic.
An implementation of our method is available in the Julia language, with examples at:
\[
\text{\repourl}.
\]

Importantly, the approach that we develop here is versatile enough to be applied for other entropic functional inequalities. Most notably, we explain in Section \ref{sec:mlsi} how similar techniques can be used to obtain bounds on the \emph{modified logarithmic Sobolev constant}, and the \emph{strong data processing constant} of a channel. (The latter application appeared in the conference paper \cite{sdpi-isit}.)

\subsection{Related work} As far as we are aware this is the first work that studies the logarithmic Sobolev constant from a computational point of view. 
In theoretical computer science, the papers \cite{barak2012hypercontractivity,kauers2017hypercontractive} show that certain \emph{hypercontractive inequalities}, associated to a particular Markov kernel on the hypercube $\{-1,1\}^n$, can be proved using sums of squares. It turns out that hypercontractive inequalities are strongly related to logarithmic Sobolev inequalities. A hypercontractive inequality associated to a Markov semigroup $P_t$ takes the form
\begin{equation}
\label{eq:hypercontractiveineq}
\|P_t x\|_{q,\pi} \leq \|x\|_{p,\pi}
\end{equation}
where $q = q(t,p) > p$, and we use the notation $\|x\|_{p,\pi} = \left(\sum_{i \in \cX} \pi_i |x_i|^p\right)^{1/p}$. Proving a family of inequalities of the form \eqref{eq:hypercontractiveineq} can be done by proving a single logarithmic Sobolev inequality of the form \eqref{eq:lsineq}. 
In fact, if $P_t$ is reversible, then \eqref{eq:hypercontractiveineq} is \emph{equivalent} to a logarithmic Sobolev inequality with $q=1+(p-1)e^{4\alpha t}$.
This was one of the first motivations for the introduction of logarithmic Sobolev inequalities by Gross in \cite{Gross1975}. We refer to \cite[§5.2.2]{bakry2013analysis} for more details on this connection.

Related to our work also are the papers \cite{boyd2004fastest,sun2006fastest} which formulate the problem of finding the Markov chain with the largest Poincar\'e constant as a convex optimization problem. In fact the paper \cite{boyd2004fastest} concludes by considering the problem of finding the Markov chain with the largest logarithmic Sobolev constant. However no algorithm is proposed as there was no known way of computing/bounding $\alpha$.
The bound we propose in this method can, in fact, be used for this matter. We comment on this application in the discussion section (Section \ref{sec:discussion}).


\subsection{Organization} 
The rest of this paper is organized as follows. 
In Section \ref{sec:review} we review some preliminary results concerning semidefinite programming and sums of squares. 
In Section \ref{sec:approach} we present our approach to bounding the logarithmic Sobolev constant, and we prove the convergence of the hierarchy of semidefinite programs (Theorem \ref{thm:hierarchy-main-intro}). We also discuss some practical aspects related to these bounds, and show how the solution of the semidefinite programs can be turned into formal certificates using exact arithmetic.
In Section \ref{sec:mlsi} we explain how the techniques used for the log-Sobolev constant can be applied to other entropic functional inequalities, namely the modified logarithmic Sobolev constant, and the strong data processing constant.
Finally, in Sections \ref{sec:numerics} and \ref{sec:cycle} we illustrate our method on various examples of Markov chains; in particular we use our method to obtain the exact log-Sobolev constant of the simple random walk on the $n$-cycle for $n\in\{5,7,9,\dots,21\}$.

\section{Background}\label{sec:review}
\label{sec:sdp_sos}

\label{sec:sos_notation}

Given a variable $t$, we let $\R[t]$ (resp. $\R[t]_d$) be the vector space of univariate real polynomials (resp. of degree at most $d$). 
Given variables $x_1,x_2,\dots,x_n$, $\R[x]=\R[x_1,\dots,x_n]$ denotes the space of polynomials with real coefficients in $x=(x_1,x_2,\dots,x_n)$, and $\R[x]_d$ is the (finite-dimensional) subspace of $\R[x]$ containing only polynomials of degree at most $d$.

\paragraph{Sums of squares} A polynomial $p \in \R[x]$ is a \emph{sum of squares} if we can find polynomials $p_1,\ldots,p_m$ such that $p = \sum_{i} p_i^2$. The set of polynomials that can be written as a sum of squares is a convex cone inside $\R[x]$ which we will denote by $\Sigma[x]$. 
Given a subspace $V\subseteq\R[x]$, we define the cone of polynomials that can be written as a sum of squares of polynomials from $V$ as:
\begin{equation}
\label{eq:SigmaV}
\Sigma(V) = \left\{ \sum_{i} q_i^2 : q_i \in V \right\} = \operatorname{cone}(\{q^2 : q \in V\}).
\end{equation}
Of particular importance is the case $V=\R[x]_d$ which corresponds to polynomials that can be written as a sum of squares of polynomials of degree at most $d$. We reserve the notation $\Sigma[x]_{2d}:=\Sigma(\R[x]_d)$ for this set, which has the property that $\Sigma[x]_{2d}=\Sigma[x]\cap\R[x]_{2d}$.

A fundamental fact about $\Sigma(V)$, is that one can decide membership in it using \emph{semidefinite programming} \cite{Shor1987ClassOG,Parrilo2000, Lasserre2001}. 
This is because a polynomial $f \in \R[x]$ belongs to $\Sigma(V)$ if and only if there exists a positive semidefinite \emph{Gram matrix} $Q$ satisfying $f=b(x)^{\top} Q b(x)$ where $b(x) = (b_1(x),\ldots,b_{\dim V}(x))$ is a basis of $V$. Indeed, a positive semidefinite $Q \psd 0$ can be equivalently written as $Q = \sum_{i} q_i q_i^{\top}$, and thus
\begin{equation}
\label{eq:sosV}
f(x) = b(x)^{\top} Q b(x), \;\; Q \psd 0 \iff f(x) = \sum_{i} (q_i^{\top} b_i(x))^2
\end{equation}
is a sum of squares of polynomials in $V$.

\paragraph{Semidefinite programming} Recall that a semidefinite program (SDP) is an optimization problem of the form
\begin{equation}
    \label{eq:sdp}
    \min_{X \in \S^n} \trace(CX) \;\;\; \text{s.t.} \;\;\; \cA(X) = b, X \psd 0
\end{equation}
where $\S^n$ is the space of $n\times n$ real symmetric matrices, $X \psd 0$ means that $X$ is positive semidefinite; and $C \in \S^n$, $\cA:\S^n\to \R^m$, $b \in \R^m$ are given. The set of positive semidefinite matrices is denoted $\S^n_+$. Observe that, by \eqref{eq:sosV}, $\Sigma(V)$ can be described as the feasible set of a semidefinite program
\[
\Sigma(V) = \{f \in \RR[x] : \exists Q \psd 0, \; \sum_{i,j=1}^{\dim V} Q_{ij} b_i(x) b_j(x) = f(x)\}.
\]
(Note that $\sum_{i,j=1}^{\dim V} Q_{ij} b_i(x) b_j(x) = f(x)$ forms a set of linear equality constraints on $Q$.)

Whilst it is generally NP-hard to decide whether a polynomial takes only nonnegative values on $\R^n$ \cite{Murty87}, there are efficient algorithms for solving SDPs to any desired precision using floating point arithmetic.
Therefore, for many computational tasks involving constraints specifying that certain polynomials be nonnegative, we can construct relaxations based on the more tractable condition that these polynomials are sums of squares. 

\paragraph{Constrained polynomial optimization}
In many situations, we are interested in certifying nonnegativity of a polynomial $f$ on a particular subset $S \subset \RR^n$. If $S$ is described by a finite set of polynomial equations and inequalities
\begin{equation}
\label{eq:semialgebraic}
S = \{x \in \RR^n : g_1(x) \geq 0,\ldots,g_J(x) \geq 0, \; h_1(x) = 0 \; h_K(x) = 0\}
\end{equation}
then an obvious \emph{sufficient} condition for $f(x) \in \RR[x]$ to be nonnegative on $S$ is that
\begin{equation}
\label{eq:pop}
f(x) = \sigma_0(x) + \sigma_1(x) g_1(x) + \dots + \sigma_J(x) + \phi_1(x) h_1(x) + \dots + \phi_K(x) h_K(x)
\end{equation}
where $\sigma_0,\ldots,\sigma_J$ are sums of squares, and $\phi_1,\ldots,\phi_K$ are arbitrary polynomials. If we restrict the degrees of $(\sigma_j)$ and $(\phi_k)$ by $\deg(\sigma_0) \leq 2d$, $\deg (\sigma_j g_j) \leq 2d$ and $\deg(\phi_k h_k) \leq 2d$, then the condition \eqref{eq:pop} can be expressed as a semidefinite feasibility problem having one semidefinite constraint of size $\dim \R[x]_{d} \times \dim \R[x]_{d}$ and $J$ semidefinite constraints of size $\dim \R[x]_{d - \frac{1}{2}\deg g_j} \times \dim \R[x]_{d-\frac{1}{2}\deg g_j}$ each \cite{parrilo2003semidefinite}.
%
%

\if0
\subsubsection{Strict feasibility}\label{sec:sos_strict_feasibility}
We include a note on how the notion of strict feasibility of conic optimization problems applies to SDPs and sum-of-squares programs.
This section is slightly technical and can be skipped at first reading. We include it here, because it will be needed for the proof of our main theorem. Recall that a semidefinite program is \emph{strictly feasible} if it admits a feasible point such that every positive semidefinite matrix variable in the SDP is positive definite at that point.
Similarly a sum-of-squares program \eqref{eq:pop} is said to be strictly feasible if 
\[f\in \ideal{h}{2d} + \relint\QQ_d(g)\] holds, 
where $\relint C$ denotes the \emph{relative interior} of a convex set $C$.
The relative interior of a cone of sums of squares can be characterized in terms of positive definite matrices as follows: if $b(x) = \bigl(b_i(x)\bigr)_{i=1}^{\dim V}$ is a basis for the finite-dimensional space of polynomials $V\subseteq\R[x]$, then
\[\relint\Sigma(V) = \{\,b(x)^\top Q b(x)\; \mid \;Q\pd 0\,\}.\]
Consequently, given a strictly feasible sum-of-squares program, it is possible to immediately write down a strictly feasible semidefinite program.

At various points, the following algebraic characterization of strict feasibility will be useful:
\begin{prop}\label{prop:relint}
    For any finite-dimensional space of polynomials $V\subseteq\R[x]$, 
    \begin{equation}
    \label{eq:relintSigmaV}
    \relint\Sigma(V) = \left\{\sum_{j=1}^Bq_j^2\mid B\in\N,\; \operatorname{span}\{q_j\mid j=1\dots,B\}=V\right\}.
    \end{equation}
\end{prop}
\begin{proof}
    Fix a basis $b(x) = \bigl(b_i(x)\bigr)_{i=1}^{\dim V}$ of $V$.

    First let $Q\pd 0$, so that $b(x)^\top Q b(x)\in\relint\Sigma(V)$.
    We can always write $Q=A^\top A$ for some square invertible real matrix $A$. 
    Then the elements of $Ab(x)$ span the space $V$, and $b(x)^\top Q b(x)=\sum_i (Ab(x))_i^2$.

    Now let $f(x) = \sum_{j=1}^Bq_j^2$, where $\operatorname{span}\{q_j\mid j=1\dots,B\}=V$.
    Then there is a matrix $\overline{A}\in \R^{B\times \dim V}$ of full column rank such that $f(x) = \sum_i (\overline{A}b(x))_i^2$, which implies that $f(x) = b(x)^\top Q b(x)$ where $Q$ is the positive definite matrix $\overline{A}^\top \overline{A}$.
\end{proof}
\fi

\paragraph{Univariate polynomials}\label{sec:univariate}
For univariate polynomials, global nonnegativity is equivalent to being a sum of squares. See for example \cite[Theorem 2.3]{lasserre_2015}. We record the following result which will be useful later:
\begin{thm}[e.g. Theorem 2.4 in \cite{lasserre_2015}]\label{thm:char-uni-nonneg}
    Let $p\in\R[t]$ have degree $k$, and suppose $p(t)\ge0$ for all $t\in[-1,1]$.
    \begin{itemize}
        \item If $k$ is odd, then
        $p \in (1-t)\Sigma[t]_{k-1}+(1+t)\Sigma[t]_{k-1}$.
        \item If $k$ is even, then
        $p \in \Sigma[t]_{k}+(1-t^2)\Sigma[t]_{k-2}$.
    \end{itemize}
\end{thm}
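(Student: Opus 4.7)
The plan is to prove this by strong induction on $k=\deg p$. The base cases are straightforward: for $k=0$, $p$ is a nonnegative constant, hence a square; for $k=1$, writing $p(t)=at+b$ with $p(\pm1)\ge 0$ gives $p(t)=\tfrac{b+a}{2}(1+t)+\tfrac{b-a}{2}(1-t)$ with both coefficients nonnegative.

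For the inductive step, I case-split on the root structure of $p$ on and around $[-1,1]$. If $p$ has a root at $\pm 1$, say at $+1$, factor $p=(1-t)q$ with $\deg q=k-1$ and $q\ge 0$ on $[-1,1]$ (by continuity at $1$). If $p$ has a root $a\in(-1,1)$, nonnegativity on $[-1,1]$ forces its multiplicity to be even, so $p=(t-a)^2 q$ with $\deg q=k-2$ and $q\ge 0$ on $[-1,1]$. If $p$ has no real root in $[-1,1]$ but has a real root $a>1$ (resp.\ $a<-1$), then $(a-t)>0$ (resp.\ $(t-a)>0$) on $[-1,1]$, and I would write $p=(a-t)\tilde q$ (resp.\ $p=(t-a)\tilde q$) with $\deg\tilde q=k-1$ and $\tilde q\ge 0$ on $[-1,1]$. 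Finally, if $p$ has no real roots at all, then $k$ must be even, $p>0$ on all of $\R$, and $p$ is a sum of two squares of polynomials of degree $\le k/2$ by the classical factorization over $\R$ combined with the Brahmagupta--Fibonacci identity $(u^2+v^2)(w^2+z^2)=(uw+vz)^2+(uz-vw)^2$, placing $p\in\Sigma[t]_k$ directly.

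In each nontrivial case the algebraic task is to convert the inductive decomposition of the lower-degree quotient into a decomposition of $p$ in the required form. The useful identities are $(1-t^2)=(1-t)(1+t)$ together with the positive-coefficient expansions
\[(a-t)=\tfrac{a+1}{2}(1-t)+\tfrac{a-1}{2}(1+t)\quad(a>1),\qquad(t-a)=\tfrac{-a-1}{2}(1-t)+\tfrac{1-a}{2}(1+t)\quad(a<-1).\]
For example, with $a>1$ and $k$ even, substituting the inductive decomposition $\tilde q=(1-t)A+(1+t)B$ and expanding gives
\[p=\bigl[\tfrac{a+1}{2}(1-t)^2 A+\tfrac{a-1}{2}(1+t)^2 B\bigr]+(1-t^2)\bigl[\tfrac{a-1}{2}A+\tfrac{a+1}{2}B\bigr],\]
which is of the claimed even-degree form; the other parity cases and other factor types are handled analogously, always using $(1-t^2)=(1-t)(1+t)$ to toggle between the ``even'' form $\Sigma[t]+(1-t^2)\Sigma[t]$ and the ``odd'' form $(1-t)\Sigma[t]+(1+t)\Sigma[t]$. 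The main obstacle is this case of a single real root outside $[-1,1]$: one must verify carefully that after the expansion the resulting SOS multipliers respect the precise degree bounds stated in the theorem, which reduces to routine degree tracking once the algebraic identities above are in hand.
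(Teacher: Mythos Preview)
The paper does not give its own proof of this theorem; it is stated with a citation to Lasserre's book and used as a black box. So there is no ``paper's proof'' to compare against.

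Your inductive argument is a correct and standard route to this classical result (sometimes called the Luk\'acs--Markov representation). The base cases are fine, and each case of the inductive step works out with the degree bounds as stated. For the record, the two cases you only sketched do go through: when $k$ is odd and you factor $p=(a-t)\tilde q$ with $a>1$ and $\tilde q=\sigma_1+(1-t^2)\sigma_2$, expanding with $(a-t)=\tfrac{a+1}{2}(1-t)+\tfrac{a-1}{2}(1+t)$ and $(1-t^2)=(1-t)(1+t)$ gives
\[
p=(1-t)\Bigl[\tfrac{a+1}{2}\sigma_1+\tfrac{a-1}{2}(1+t)^2\sigma_2\Bigr]+(1+t)\Bigl[\tfrac{a-1}{2}\sigma_1+\tfrac{a+1}{2}(1-t)^2\sigma_2\Bigr],
\]
and both bracketed SOS terms have degree at most $k-1$; the endpoint-root case $p=(1\mp t)q$ and the interior-root case $p=(t-a)^2q$ are even more direct. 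The ``no real roots'' case is handled exactly as you say via the complex factorization and the two-square identity. So your proposal is a complete proof once the omitted symmetric cases are written out.
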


\if0
This result is easily extended to a characterisation of strict positivity of a polynomial on a closed interval.
    
\begin{prop}\label{prop:strict-char-uni-nonneg}
    Let $p\in\R[t]$ have degree $k$, and suppose $p(t)>0$ for all $t\in[-1,1]$.
    \begin{itemize}
        \item If $k$ is odd, then
        $p \in (1-t)\relint\Sigma[t]_{k-1}+(1+t)\relint\Sigma[t]_{k-1}$.
        \item If $k$ is even, then
        $p \in \relint\Sigma[t]_{k}+(1-t^2)\relint\Sigma[t]_{k-2}$.
    \end{itemize}
\end{prop}
\begin{proof}
    If $k$ is odd, let $\sigma_0\in\relint\Sigma[t]_{k-1}$. Since $p(t)>0$ on $[-1,1]$, there is $\epsilon>0$ such that $p(t)-2\epsilon\sigma_0(t) \ge0$ on $[-1,1]$. So we can find $\sigma_1,\sigma_2\in\Sigma[t]_{k-1}$ such that $p-2\epsilon\sigma_0=(1-t)\sigma_1+(1+t)\sigma_2$. Now \[p=(1-t)(\sigma_1+\epsilon\sigma_0)+(1+t)(\sigma_2+\epsilon\sigma_0) \in (1-t)\relint\Sigma[t]_{k-1}+(1+t)\relint\Sigma[t]_{k-1}.\]

    If $k$ is even, let $\sigma_0\in\relint\Sigma[t]_{k-2}$. Since $p(t)>0$ on $[-1,1]$, there is $\epsilon>0$ such that $p(t)-2\epsilon\sigma_0(t) \ge0$ on $[-1,1]$. So we can find $\sigma_1\in\Sigma[t]_{k},\sigma_2\in\Sigma[t]_{k-2}$ such that $p-2\epsilon\sigma_0=\sigma_1+(1-t^2)\sigma_2$. 
    Now \[p=(\sigma_1+\epsilon(1+t^2)\sigma_0)+(1-t^2)(\sigma_2+\epsilon\sigma_0) \in \relint\Sigma[t]_{k}+(1-t^2)\relint\Sigma[t]_{k-2},\]
    where in the inclusion we have used that $(1+t^2)\sigma_0\in\relint\Sigma[t]_{k}$. This follows from Proposition \ref{prop:relint} and the fact that given a basis $b$ for $\R[t]_{\frac{k}{2}-1}$, the set $b\cup\{tq\mid q\in b\}$ spans $\R[t]_{\frac{k}{2}}$.
\end{proof}
\fi

\section{Our approach}\label{sec:approach}

Consider a Markov chain on $\mathcal{X}=\{1, \dots, n\}$ with irreducible transition matrix $K$, (unique) stationary distribution $\pi$, and Dirichlet form
\[ \mathcal{E}(x,x) = \frac{1}{2}\sum_{i,j \in \cX}\pi_iK_{ij}(x_i-x_j)^2. \]
Denote $\pi_*=\min\{\pi_i\mid i\in \cX\}$. Since the kernel $K$ is irreducible and $\cX$ is finite, $\pi_*>0$.

We are interested in a systematic way of determining numbers $\gamma\in \R$ such that
\begin{equation}\label{cheese}
\gamma \mathcal{E}(x,x) \ge \Ent_\pi[x^2] := \sum_{i\in\cX} \pi_i x_i^2 \log\left(\frac{x_i^2}{\norm{x}_\pi^2}\right)
\end{equation}
holds for every $ x\in \R^n$. Then $\gamma\inv$ will be a lower bound on $\alpha$, the logarithmic Sobolev constant of the chain.

Since \eqref{cheese} is homogeneous in $x$, we need only prove it for $x$ satisfying $\norm{x}_\pi^2=1$. 
We may also assume that the components of $x$ are nonnegative, since $\mathcal{E}(\abs{x}, \abs{x}) \le \mathcal{E}(x,x)$.
This simplifies the right hand side to $\sum_{i\in\cX}2\pi_ix_i^2\log x_i$.
Let us write $S^\pi$ for the intersection of the hyperellipsoid $\norm{x}_\pi^2=1$ with the nonnegative orthant 
\[ S^\pi = \{x\in\R_+^n\mid\norm{x}_\pi^2=1\}.\]
Now the inequalities we are interested in have the form
\begin{equation}\label{bigcheese}
    \gamma \mathcal{E}(x, x) \ge \sum_{i\in\cX} 2\pi_i x_i^2 \log\left(x_i\right)
\end{equation}
whenever $x\in S^\pi$. The right hand side of \eqref{bigcheese} is  not a polynomial in $x$. In order to make use of the sum of squares machinery described in Section \ref{sec:sdp_sos}, we will replace the right hand side of \eqref{bigcheese} by an upper bound which is a polynomial.
Specifically, if $P(x)\in\R[x]$ is a polynomial satisfying $P(x)\ge \Ent_\pi[x^2]$ for all $x\in S^\pi$, then the polynomial inequality
\begin{equation}\label{mediumcheese}
    \gamma \mathcal{E}(x,x) \ge P(x) \qquad\forall\;x\in S^\pi
\end{equation}
implies \eqref{bigcheese}. Fixing $d\geq \frac{1}{2}\deg P$, we can then find valid values of $\gamma$ by solving the following sum of squares program:
\begin{equation}\label{soscheese}
    \min_{\gamma\in \R}\gamma \;\st \; \gamma\mathcal{E}(x,x) - P(x) \in \SOS_d(S^{\pi})
\end{equation}
where $\SOS_d(S^{\pi})$ is a degree-$d$ sum-of-squares relaxation of the set of nonnegative polynomials on $S^{\pi}$ following \eqref{eq:pop}, namely:
\begin{equation}
\label{eq:SOSOpi}
\begin{aligned}
\SOS_d(S^{\pi}) := \Biggl\{ f \in \RR[x]_{2d} : & \; f(x) = \sigma_0(x) + \sum_{i=1}^n x_i \sigma_i(x) + \phi(x) (\|x\|_{\pi}^2-1)\\
& \qquad\qquad \text{ where } \sigma_0 \in \Sigma[x]_{2d}, \; \sigma_i \in \Sigma[x]_{2d-2} \;\;(i=1,\ldots,n)\\
& \qquad \qquad \qquad \qquad \phi \in \RR[x]_{2d-2} \Biggr\}.
\end{aligned}
\end{equation}
Two questions should be addressed:
\begin{itemize}
\item How to choose the polynomial $P$ which approximates $\Ent_\pi[x^2]$ on $S^\pi$?
\item Having chosen a polynomial $P$ such that $\Ent_{\pi}[x^2] \le P(x)\le(1+\epsilon)\Ent_\pi[x^2]$ for all $x \in S^{\pi}$, can we guarantee that the sum-of-squares relaxation \eqref{soscheese} will be $\epsilon$-close to the log-Sobolev constant $\alpha$, for large enough $d$?
\end{itemize}
We address these two questions in the following two subsections, starting by the latter.

\subsection{The sum-of-squares relaxation \eqref{soscheese}}

Assume we have a polynomial $P(x)$ which is an $\epsilon$-approximation to the entropy function $\Ent_{\pi}[x^2]$ on $S^{\pi}$. In this subsection we show that the sum-of-squares program \eqref{soscheese} will give, for large enough $d$, an $\epsilon$-approximation of the logarithmic Sobolev constant. This is the object of the next theorem.

\begin{thm}\label{hierarchy-convergence}
    Let $\mathcal{E},\,\pi$ be the Dirichlet form and stationary distribution of an irreducible Markov chain on $\cX=\{1,\dots,n\}$ with log-Sobolev constant $\alpha$.
    Let $\epsilon\in(0,1)$, and suppose that $P(x)\in\R[x]$ is a polynomial such that, for all $x\in S^{\pi}$, we have
    \begin{equation}\label{eq::poly-approximates-eps}
        \Ent_{\pi}[x^2] \le P(x)\le(1+\epsilon)\Ent_\pi[x^2].
    \end{equation}
    Let $\gamma^*_d$ be the optimal value of \eqref{soscheese}, and call $\alpha^P_d = 1/\gamma^*_d$. Then $\alpha^P_d \leq \alpha$. Furthermore, there exists $d_0\in\N$ such that for every $d\ge d_0$, $\alpha^P_d \ge (1-\epsilon)\,\alpha$.
\end{thm}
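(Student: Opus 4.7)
My plan splits into two directions. For $\alpha^P_d\leq \alpha$: any $\gamma$ feasible for \eqref{soscheese} satisfies $\gamma\mathcal{E}(x,x)\geq P(x)\geq\Ent_\pi[x^2]$ on $S^\pi$ by \eqref{eq::poly-approximates-eps}; the degree-$2$ homogeneity of both sides together with $\mathcal{E}(\abs{x},\abs{x})\leq\mathcal{E}(x,x)$ extends this inequality to every $x\in\R^n$, which forces $\gamma\geq 1/\alpha$. Hence $\gamma^*_d\geq 1/\alpha$ and $\alpha^P_d=1/\gamma^*_d\leq\alpha$.

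For $\alpha^P_d\geq(1-\epsilon)\alpha$, I fix the specific target $\gamma_0:=1/((1-\epsilon)\alpha)$ and aim to produce a single $d_0$ for which
\[g(x):=\gamma_0\,\mathcal{E}(x,x)-P(x)\in\SOS_{d_0}(S^\pi).\]
Since the cone $\SOS_d(S^\pi)$ is nondecreasing in $d$, this will give $\gamma^*_d\leq\gamma_0$ for every $d\geq d_0$, hence $\alpha^P_d\geq(1-\epsilon)\alpha$. A first observation: combining the LSI with the upper bound $P\leq(1+\epsilon)\Ent_\pi[x^2]$ gives, on $S^\pi$,
\[g(x)\geq\bigl(\tfrac{1}{1-\epsilon}-(1+\epsilon)\bigr)\Ent_\pi[x^2]=\tfrac{\epsilon^2}{1-\epsilon}\Ent_\pi[x^2]\geq 0,\]
and $\Ent_\pi[x^2]=0$ on $S^\pi$ forces $x=\vec{1}$, so the zero locus of $g$ on $S^\pi$ is the isolated point $\{\vec{1}\}$.

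The technical heart is to verify that the intrinsic Hessian of $g|_{S^\pi}$ at $\vec{1}$, viewed as a bilinear form on $T_{\vec{1}}S^\pi=\{\xi:\sum_i\pi_i\xi_i=0\}$, is \emph{strictly positive definite}. Writing $x=\vec{1}+\xi$: (i) $\mathcal{E}(x,x)=\mathcal{E}(\xi,\xi)$ exactly since the quadratic form $\mathcal{E}$ vanishes on constants, and the Poincar\'e inequality gives $\mathcal{E}(\xi,\xi)\geq\lambda\norm{\xi}_\pi^2\geq 2\alpha\norm{\xi}_\pi^2$ on the tangent space; (ii) a direct Taylor expansion along $S^\pi$ yields $\Ent_\pi[(\vec{1}+\xi)^2]=2\norm{\xi}_\pi^2+O(\norm{\xi}^3)$, so the intrinsic Hessian of $\Ent_\pi[x^2]$ at $\vec{1}$ equals $4\diag(\pi)|_{T_{\vec{1}}S^\pi}$; (iii) the sandwich $\Ent_\pi[x^2]\leq P\leq(1+\epsilon)\Ent_\pi[x^2]$ on $S^\pi$, together with the fact that $P$, $\Ent_\pi[x^2]$, and $(1+\epsilon)\Ent_\pi[x^2]$ all vanish at $\vec{1}$, forces the intrinsic Hessian of $P$ at $\vec{1}$ to lie between $4\diag(\pi)|_{T_{\vec{1}}S^\pi}$ and $4(1+\epsilon)\diag(\pi)|_{T_{\vec{1}}S^\pi}$ in the PSD order. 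Combining these three facts, the intrinsic Hessian of $g$ at $\vec{1}$ satisfies $\mathrm{Hess}(g)|_{T_{\vec{1}}S^\pi}\succeq \tfrac{4\epsilon^2}{1-\epsilon}\diag(\pi)|_{T_{\vec{1}}S^\pi}\succ 0$.

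To close the argument I invoke a local-global sums-of-squares principle. First, the quadratic module generated by $\{x_i:i=1,\dots,n\}$ together with the equality $\norm{x}_\pi^2-1=0$ is Archimedean: the identity $\pi_*^{-1}-\sum_i x_i^2 = \sum_i(\pi_i/\pi_*-1)\,x_i^2-\pi_*^{-1}(\norm{x}_\pi^2-1)$ exhibits a ball-containing element. Next, $g$ is nonnegative on $S^\pi$ with a single isolated zero at $\vec{1}$, at which no inequality $x_i\geq 0$ is active (so the tangent cone coincides with $T_{\vec{1}}S^\pi$) and the intrinsic Hessian on this tangent space is strictly positive definite. Under precisely these hypotheses, Scheiderer's local-global principle (equivalently Marshall's ``boundary Hessian condition'') yields a Putinar-type representation of $g$ in the module, at some finite degree $d_0$, completing the proof. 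I expect this final invocation to be the main obstacle: because $g$ must vanish at $\vec{1}$ on $S^\pi$, the classical Putinar positivstellensatz (which demands strict positivity on the set) does not apply directly, and the Hessian nondegeneracy computed above is exactly what licenses the finer Morse-type argument.
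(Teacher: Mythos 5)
Your proposal is correct and takes essentially the same route as the paper: both arguments reduce the claim to showing that $g=\frac{1}{(1-\epsilon)\alpha}\mathcal{E}(x,x)-P(x)$ is nonnegative on $S^\pi$ with its only zero at $\1$, verify the first- and second-order optimality conditions there, and then invoke the Scheiderer--Marshall--Nie finite-degree representation theorem (the paper uses Theorem \ref{thm:nie}, i.e.\ Nie's Theorem 1.1, which is exactly the local--global principle you appeal to). The only difference is in how the Hessian condition is checked: you do it quantitatively, sandwiching the intrinsic Hessian of $P$ at $\1$ between those of $\Ent_\pi[x^2]$ and $(1+\epsilon)\Ent_\pi[x^2]$ and using the Poincar\'e bound $\lambda\ge 2\alpha$, whereas the paper obtains it more indirectly from the second-order \emph{necessary} condition for the nonnegative polynomial $\frac{1+\epsilon}{\alpha}\mathcal{E}(x,x)-P(x)$ together with strict positivity of $\nabla^2\mathcal{E}$ on the tangent space.
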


It is a foundational result in sum-of-squares programming that any polynomial which is \emph{strictly positive} on a compact basic semialgebraic set (i.e., a compact set of the form \eqref{eq:semialgebraic}) has a sum-of-squares representation on that set, provided an additional so-called Archimedean condition is satisfied. This is known as Putinar's Positivstellensatz \cite{Putinar93}.
We cannot directly use this result however, as the assumption \eqref{eq::poly-approximates-eps} on $P$ implies that $P(\1) = 0$ and so $\gamma\mathcal{E}(\1,\1) - P(\1) = 0$.
Instead we will use a result from a more recent line of work \cite{Scheiderer03, Marshall2006, Nie14}, which guarantees the existence of a sum-of-squares representation provided the second-order sufficient conditions of optimality are satisfied at the vanishing points of the polynomial.  The most convenient result for our purposes is \cite[Theorem 1.1]{Nie14} which we cite here in the particular case of $S^{\pi}$ for convenience.

\begin{thm}[{\cite[Theorem 1.1]{Nie14}}]\label{thm:nie}
Assume $f(x) \in \R[x]$ is a polynomial such that $f(x) \geq 0$ for all $x \in S^{\pi}$. Let $h(x) = \|x\|_{\pi}^2 - 1$, and assume that any zero $x^*$ of $f$ in $S^{\pi}$ satisfies the following:
\begin{itemize}
\item $x_i^* > 0$ for each $i \in \{1,\ldots,n\}$
\item $\nabla (f - \kappa h)(x^*) = 0$ for some $\kappa \in \RR$ (which can depend on $x^*$)
\item $\nabla^2 (f - \kappa h)(x^*)$ is positive definite on the subspace $\{v \in \R^n : \sum_{i=1}^n \pi_i x_i^* v_i = 0\}$ (the tangent space to $S^{\pi}$ at $x^*$).
\end{itemize}
Then there exists large enough $d$ such that $f \in \SOS_{d}(S^{\pi})$.
\end{thm}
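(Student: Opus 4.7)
The plan is to follow the strategy of Nie \cite{Nie14}, which combines local sum-of-squares certificates near each zero of $f$ with a global Putinar-type representation on the complement.

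First, I would establish that the zero set $Z = \{x \in S^{\pi} : f(x) = 0\}$ is finite. At each $x^* \in Z$, the KKT condition $\nabla f(x^*) = \kappa \nabla h(x^*)$ together with positive definiteness of $\nabla^2(f - \kappa h)(x^*)$ on the tangent space $T_{x^*} S^{\pi}$ forces $f|_{S^{\pi}}$ to have a strict local minimum at $x^*$, so $x^*$ is isolated. Compactness of $S^{\pi}$ then gives $Z = \{x^{(1)}, \ldots, x^{(m)}\}$ finite. By assumption the inequality constraints $x_i \geq 0$ are strictly inactive at each $x^{(k)}$, which means the local analysis near each zero reduces to an equality-constrained problem on a smooth manifold.

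Second, I would construct a local SOS representation in a neighborhood $U_k$ of each $x^{(k)}$. This is the technical heart. Using Marshall's boundary Hessian condition \cite{Marshall2006}, together with the strict positive definiteness of $\nabla^2(f-\kappa_k h)$ on the tangent space, one produces an identity
\[
f = \sigma_0^{(k)} + \sum_{i=1}^n x_i\,\sigma_i^{(k)} + \phi^{(k)} h \qquad \text{valid on } U_k,
\]
with SOS multipliers $\sigma_j^{(k)}$ of bounded degree. The idea is to expand $f$ in a Taylor series about $x^{(k)}$: on the quotient $\R[x]/(h)$ the leading nontrivial term is a positive definite quadratic on $T_{x^{(k)}} S^{\pi}$, which can be written as an explicit sum of squares, and higher-order terms are absorbed by a perturbation argument together with a judicious choice of $\phi^{(k)}$.

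Third, I would glue the local certificates. Choose an SOS partition of unity–like family: polynomials $\theta_k$, $k=0,\ldots,m$, with each $\theta_k$ ($k\geq 1$) strictly positive at $x^{(k)}$ and vanishing outside $U_k$, and $\theta_0$ supported away from $Z$ on an open set where $f \geq c > 0$. On the support of $\theta_0$, Putinar's Positivstellensatz applies directly: the archimedean condition holds since $\|x\|_{\pi}^2 = 1$ combined with $x_i\ge0$ yields a bounded feasible region, so $f$ admits a global SOS representation there. Multiplying each certificate by $\theta_k^2$, summing, and using that $\sum_k \theta_k^2$ is strictly positive on $S^{\pi}$ (so its reciprocal can again be handled by Putinar), one obtains $f \in \SOS_d(S^{\pi})$ for some sufficiently large $d$.

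The main obstacle is the local step: translating the differential-geometric condition ``$\nabla^2(f-\kappa h)(x^{(k)})$ is positive definite on the tangent space'' into an \emph{algebraic} SOS identity modulo $(h)$. This is where Marshall's boundary Hessian condition does the heavy lifting, and one must check that the hypotheses of that result are met by the equality $h(x)=\|x\|_\pi^2-1$ (which defines a smooth ellipsoid, so $\nabla h$ is nonvanishing on $S^\pi$). The patching in the third step is comparatively routine once the local certificates are in hand.
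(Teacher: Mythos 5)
The paper offers no proof of this statement at all: it is quoted verbatim as Theorem~1.1 of \cite{Nie14} and used as a black box, so the only meaningful comparison is with Nie's own argument. Your sketch does reproduce its broad architecture: the second-order condition makes each zero an isolated strict local minimizer of $f$ on $S^{\pi}$, hence the zero set is finite by compactness; the hypothesis $x_i^*>0$ means only the equality $h=0$ is active, so the local analysis is an equality-constrained one; and the technical core is a local certificate at each zero in the spirit of Marshall's boundary Hessian condition \cite{Marshall2006}. Up to that point your plan is consistent with how the cited result is actually proved.

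The genuine gap is in your third, ``gluing'' step. There are no polynomials $\theta_k$ that vanish outside a neighborhood $U_k$, nor a polynomial $\theta_0$ ``supported away from $Z$'': a nonzero polynomial cannot have bounded support, so the SOS-partition-of-unity construction you describe cannot be carried out in $\R[x]$. Likewise, Putinar's Positivstellensatz cannot be ``applied directly on the support of $\theta_0$'': it requires strict positivity on all of $S^{\pi}$ and yields global multipliers; it does not localize to a subregion, and dividing by $\sum_k\theta_k^2$ leaves the category of polynomials. The correct gluing mechanism---and the place where the real work happens in \cite{Marshall2006,Nie14}---is Scheiderer's local--global principle \cite{Scheiderer03}: for an archimedean quadratic module, a polynomial $f\ge 0$ on the set with only finitely many zeros belongs to the module if and only if it belongs to it in the completed local rings (formal power series) at those zeros, and the first- and second-order optimality conditions are used precisely to verify that local membership, e.g.\ via a Lagrangian decomposition of the form $f=\sigma_0+\sum_i\lambda_i(x)\,x_i+\phi\,h$ with polynomial multiplier expressions. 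Calling the patching ``comparatively routine'' inverts the difficulty: without the local--global principle (or Nie's refinement of it) the argument does not close, whereas once it is invoked your step two essentially reduces to checking the hypotheses you already listed.
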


Equipped with this theorem, we are ready to prove Theorem \ref{hierarchy-convergence}.

\begin{proof}[Proof of Theorem \ref{hierarchy-convergence}]
The fact that $\alpha^P_d \leq \alpha$ is obvious by construction. We thus focus on the second claim of the theorem.

Let $f_1(x):=\frac{1}{(1-\epsilon)\,\alpha}\,\mathcal{E}(x,x) - P(x)$. Our goal is to show that for large enough $d$, $f_1 \in \SOS_d(S^{\pi})$. 
By definition of the log-Sobolev constant and \eqref{eq::poly-approximates-eps}, the polynomial $f_0(x):=\left(\frac{1+\epsilon}{\alpha}\right)\mathcal{E}(x,x) - P(x)$ is nonnegative on $S^\pi$.
        Note that $1+\epsilon < (1-\epsilon)^{-1}$ and, by irreducibility, $\mathcal{E}(x,x)$ is strictly positive on $S^\pi\backslash\{\1\}$ (this follows from Perron-Frobenius).
        Since
        \[
        f_1(x) = f_0(x) + \frac{2\epsilon}{(1-\epsilon^2)\alpha}\cE(x,x),
        \]
the polynomial $f_1$ is positive on $S^\pi\backslash\{\1\}$, and zero at $x=\1$.

    It remains to check that the conditions of Theorem \ref{thm:nie} hold for $f_1$ at $x^*=\1$. Certainly $\1>0$, and since $x^*=\1$ is a minimum of $f_1$ on $S^{\pi}$, the first-order optimality condition also holds for some $\kappa\in\R$.

    Let $L_0(x) = f_0(x) - \kappa(\norm{x}_\pi^2-1)$, and $L_1(x) = f_1(x) - \kappa(\norm{x}_\pi^2-1)$.
    The second-order sufficiency condition for $f_1$, which it remains to verify, is that for any $v\neq0\st\inp{v}{\1}_\pi=0$, $v^\top\nabla^2L_1(\1)v>0$.
    By the second-order \emph{necessary} condition for $f_0$ to be nonnegative, $v^\top\nabla^2L_0(\1)v\ge0$ for any such $v$, so it suffices to check that $v^\top\nabla^2\cE(\1,\1)v>0$ (since $L_1(x)=L_0(x) + \frac{2\epsilon}{(1-\epsilon^2)\alpha}\cE(x,x)$). We have $v^\top\nabla^2\cE(\1,\1)v=2\cE(v,v)$, and by irreducibility of the Markov kernel, this quantity is nonnegative can only vanish if $v\in\R\1$ which is impossible since $v\neq0$ and $\inp{v}{\1}_\pi=0$.
\end{proof}

\begin{remark}[Rate of convergence]
It would be satisfying to prove an upper bound on $d_0$ in terms of $n$, or to characterize the rate of convergence of the solutions $\gamma$ to $\sup_{x\in S^\pi}\frac{P(x)}{\cE(x,x)}$, as was done e.g., in \cite{Reznick1995,Doherty2012,Fang2019}.
Unfortunately, as far as we are aware such results are only available for the strictly positive case, and even then, the resulting degree bounds are too large to implement in practice. Our experiments in Section \ref{sec:numerics} suggest that finite convergence often occurs already at $d=3$, at least for small chains.
\end{remark}

Whilst Theorem \ref{hierarchy-convergence} asserts that for large enough $d$, $\alpha^P_d \geq (1-\epsilon) \alpha$, it is not a priori obvious that for small values of $d$, the value of the relaxation $\alpha^P_d$ is even strictly positive, i.e., that \eqref{soscheese} is feasible.
We show below, in Proposition \ref{feas}, that under some mild conditions on $P$ the program \eqref{soscheese} is indeed feasible for any $d$, and hence can already yield nontrivial bounds on $\alpha$ for small values of $d$.

\begin{prop}\label{feas}
Let $\mathcal{E},\,\pi$ be the Dirichlet form and stationary distribution of an irreducible Markov chain on $\cX=\{1,\dots,n\}$ with log-Sobolev constant $\alpha$.
    Suppose that $P(x)\in\R[x]$ is a polynomial such that, for all $x\in S^{\pi}$, we have
    \begin{equation}\label{eq::poly-approximates-eps-b}
        \Ent_{\pi}[x^2] \le P(x)\le(1+\epsilon)\Ent_\pi[x^2]
    \end{equation}
    for some $\epsilon > 0$.
Then for any $d \geq (\deg P)/2$, $\alpha^P_d > 0$, i.e., \eqref{soscheese} is feasible.
\end{prop}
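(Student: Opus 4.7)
The plan is to argue by contradiction via SDP duality. Suppose \eqref{soscheese} were infeasible at some $d \geq (\deg P)/2$. By a Farkas-type separation argument for convex cones in the finite-dimensional space $\R[x]_{2d}$, there would exist a linear functional $L : \R[x]_{2d} \to \R$ satisfying (i) $L(p^2) \geq 0$ for every $p \in \R[x]_d$, (ii) $L(x_i p^2) \geq 0$ for every $p \in \R[x]_{d-1}$ and each $i$, (iii) $L((\|x\|_\pi^2 - 1)\, q) = 0$ for every $q \in \R[x]_{2d-2}$, (iv) $L(\cE(x,x)) \leq 0$, and (v) $L(P) > 0$. The goal will be to derive a contradiction by showing $L(P) = 0$.

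The first step exploits that $\cE(x,x) = \tfrac12\sum_{i,j}\pi_i K_{ij}(x_i-x_j)^2$ is a manifest sum of squares: combining (i) with (iv) forces $L(\cE) = 0$, so $L((x_i-x_j)^2) = 0$ for each edge $(i,j)$ of the kernel. Cauchy--Schwarz applied to the PSD bilinear form $B(f,g) := L(fg)$ on $\R[x]_d$ then yields $L((x_i-x_j)\, g) = 0$ for all $g \in \R[x]_d$ and every edge pair, and by propagation along paths of the irreducible kernel this extends to arbitrary pairs $(i,j)$. The crucial inductive step will be to boost this to $L((x_i-x_j)\, h) = 0$ for every $h \in \R[x]_{2d-1}$, by factoring higher-degree monomials as products of low-degree factors and chaining Cauchy--Schwarz through auxiliary identities of the form $L(((x_i-x_j)\,h_0)^2) = 0$ established earlier in the induction (combined, where needed, with the ideal relations from (iii)). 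Once this holds, $L$ annihilates the entire ideal of the diagonal $\{x_1 = \cdots = x_n\}$ modulo degree $2d$, and hence descends to a pseudo-moment functional $\hat L : \R[t]_{2d} \to \R$ with $L(f(x)) = \hat L(f(t\1))$.

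Once $L$ has descended, the remaining steps become clean. Condition (iii) translates to $\hat L((t^2-1)\hat q(t)) = 0$ for $\hat q \in \R[t]_{2d-2}$, so iteratively $\hat L(t^{2j}) = \hat L(1) =: a$ and $\hat L(t^{2j+1}) = \hat L(t) =: b$. Condition (i) forces $|b| \leq a$, and condition (ii) applied to $\hat p(t) = t - 1 \in \R[t]_{d-1}$ (valid whenever $d \geq 2$) evaluates to $\tfrac12\bigl[(a+b)\cdot 0 - (a-b)\cdot 4\bigr] = -2(a-b) \geq 0$, giving $a \leq b$; combined with $|b| \leq a$ this forces $a = b$. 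Finally, the sandwich bound \eqref{eq::poly-approximates-eps-b} at $x = \1$ pins $P(\1) = 0$ (since $\Ent_\pi[\1^2] = 0$), and the identities $\hat L(t^k) = a$ for all $k \leq 2d$ give $\hat L(f) = a\,f(1)$ for every $f \in \R[t]_{2d}$; hence $L(P) = \hat L(P(t\1)) = a\,P(\1) = 0$, contradicting (v). The main obstacle is the inductive degree-boosting step in the second paragraph: making it rigorous for the full range $\deg h \leq 2d-1$ requires careful bookkeeping and may need supplementary use of the module inequalities (ii) and of (iii) beyond a pure Cauchy--Schwarz chain. A secondary point that needs attention is the borderline case $d = 1$, which can occur only if $\deg P \leq 2$ and should be handled by a separate direct argument (e.g.\ observing that any such $P$ satisfying the sandwich is, modulo $(\|x\|_\pi^2-1)$, a scalar multiple of $\cE(x,x)$).
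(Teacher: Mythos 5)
Your proof takes a genuinely different route from the paper's (which is constructive), but as written it has two real gaps. The first is the opening Farkas step. The set $\SOS_d(S^{\pi})$ is a \emph{truncated} quadratic module, and you never argue that it (or the cone $\SOS_d(S^{\pi})+\R\,\cE$) is closed; without closedness, infeasibility of \eqref{soscheese} only gives you \emph{proper} separation of the line $\{\gamma\cE-P:\gamma\in\R\}$ from the cone, which yields $L\ge 0$ on the cone, $L(\cE)=0$, and $L(P)\ge 0$ --- not the strict inequality $L(P)>0$ in your item (v). The degenerate case $L(P)=0$ cannot be excluded by the separation theorem, and it is vacuous: the evaluation functional at $x=\1$ already satisfies all of (i)--(iv) together with $L(P)=P(\1)=0$ and exists whether or not the program is feasible. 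So proving ``$L(P)=0$'' produces no contradiction unless you first establish a strict-alternative Farkas lemma, i.e.\ closedness of the relevant cone --- and this is exactly the delicate regime here, since every point of the line vanishes at $\1$ and hence runs along the boundary of the cone (compare the paper's Remark \ref{rem:strictfeas} on the failure of strict feasibility).

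The second gap is the degree-boosting step you flag yourself, and it is more than bookkeeping. From $L((x_i-x_j)^2)=0$, Cauchy--Schwarz on $B(f,g)=L(fg)$, $f,g\in\R[x]_d$, gives $L((x_i-x_j)g)=0$ only for $\deg g\le d$; to iterate you first need $L\bigl(((x_i-x_j)h_0)^2\bigr)=0$, which by the same mechanism you only obtain when $\deg\bigl((x_i-x_j)h_0^2\bigr)\le d$, i.e.\ $\deg h_0\le (d-1)/2$, so the chain reaches $h$ of degree roughly $3d/2$, well short of the $2d-1$ needed for $L$ to descend to a univariate functional $\hat L$. In general a level-$d$ pseudo-moment functional whose moment matrix kernel contains a polynomial $q$ does \emph{not} annihilate the degree-$2d$ truncation of the ideal generated by $q$; this ``truncated ideal'' property is precisely what fails without flatness-type hypotheses, so the heart of your argument is missing, not merely unpolished (and the $d=1$ fallback claim that any quadratic $P$ satisfying the sandwich is a multiple of $\cE$ modulo the ideal is also unproven). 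By contrast, the paper sidesteps duality entirely: it writes $P-\kappa(\|x\|_\pi^2-1)=b(x)^\top B\,b(x)$ in a basis $b$ of polynomials vanishing at $\1$, shows via the Poincar\'e inequality and an explicit decomposition of $\Var_\pi[x]$ that $\cE(x,x)\equiv b(x)^\top A\,b(x)$ modulo $(\|x\|_\pi^2-1)$ with $A\pd 0$, and then any $\gamma$ with $\gamma A-B\psd 0$ is feasible; if you want to keep a dual-style argument you would need to supply the closedness statement and the truncated-ideal lemma, each of which is a substantial result in its own right.
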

\begin{proof}
We only include a brief sketch, and we defer the technical details to the appendix.
First we note that, from \eqref{eq::poly-approximates-eps-b}, $P(x) \geq 0$ for $x \in S^{\pi}$ and furthermore $P(\1) = 0$. Thus, by the first-order conditions of optimality we must have $\nabla P(\1) = 2\kappa \pi$ for some $\kappa \in \RR$, or equivalently $\nabla L(\1) = 0$ where 
\[
L(x) = P(x) - \kappa(\|x\|_{\pi}^2-1).
\]
Let $b_1(x),\ldots,b_N(x)$ be a basis for the subspace of polynomials $\{p \in \RR[x]_{d} : p(\1) = 0\}$. By Lemma \ref{lem:spanbibj} in the Appendix we know that
\[
\linspan \left\{ b_i b_j : 1 \leq i,j \leq N \} = \{p \in \RR[x]_{2d} : p(\1) = 0 \text{ and } \nabla p(\1) = 0\right\}.
\]
This means that we can write 
\begin{equation}
\label{eq:LPkappaB}
L(x) = P(x) - \kappa(\|x\|_{\pi}^2-1) = b(x)^T B b(x)
\end{equation}
for some $N\times N$ symmetric matrix $B$, where $b(x) = [b_i(x)]_{1\leq i \leq N}$.

To complete the proof, we will show there exists a positive \emph{definite} matrix $A \pd 0$ such that
\begin{equation}
\label{eq:EbAb}
\cE(x,x) = b(x)^T A b(x) \bmod (\|x\|_{\pi}^2 - 1).
\end{equation}
This will conclude the proof, because then by choosing $\gamma$ large enough such that $\gamma A - B \psd 0$, we get
\[
\gamma \cE(x,x) - P(x) = b(x)^T (\gamma A - B) b(x) \bmod (\|x\|_{\pi}^2 - 1).
\]
Since $\gamma A - B \psd 0$, the term $b(x)^T (\gamma A - B) b(x)$ is a sum-of-squares of polynomials, and so this establishes that $\gamma \cE(x,x) - P(x) \in \SOS_d(S^{\pi})$ which is what we wanted.

We focus on proving \eqref{eq:EbAb}. From the Poincar\'e inequality, we know that $\cE(x,x) - \lambda \Var_\pi[x] \geq 0$ with $\lambda > 0$, for all $x \in S^{\pi}$. Since $\cE(x,x) - \lambda \Var_\pi[x]$ is a quadratic form, this means that it is necessarily a sum-of-squares, i.e., $\cE(x,x) - \lambda \Var_\pi[x] \in \SOS_1(S^{\pi})$. By writing $\cE(x,x) = (\cE(x,x) - \lambda \Var_\pi[x]) + \lambda \Var_\pi[x]$ it suffices to prove that $\Var_\pi[x] = \|x-\1\|_{\pi}^2$ has the required decomposition \eqref{eq:EbAb}. This is proved in Lemma \ref{lem:sosdecompVarpi} in the Appendix.
%
\end{proof}

\begin{remark}
\label{rem:Pvanishgrad}
In the proof of Proposition \ref{feas} the only property we actually use about $P$ is that $P(\1) = 0$ and that $\nabla P(\1) \propto \pi$, which is a consequence of \eqref{eq::poly-approximates-eps}.
\end{remark}

None of our results so far give us any indication of how to choose $P(x)$, nor indeed whether it is possible for a polynomial to approximate $\Ent_\pi[x^2]$ in the manner that Theorem \ref{hierarchy-convergence} demands. This will be the topic of our next section.

\subsection{Choice of polynomial upper bound}\label{Poly}

The focus of this section is on obtaining polynomial upper bounds for the relative entropy $\Ent_\pi[x^2]=\sum_i\pi_ix_i^2\log(x_i^2)$ which are valid on $S^\pi$.
In this work we will only consider separable bounds of the form $P(x)=\sum_i\pi_ip_i(x_i)$, where we will require $p_i(t)\ge t^2\log(t^2)$ for all $t\in[0,1/\sqrt{\pi_i}]$.
We describe two ways of obtaining polynomial upper bounds on $q(t) = t^2\log (t^2)$ for $t > 0$. 
The first is elementary and consists simply of a truncated Taylor approximation. Even though this method yields reasonably good bounds on small chains, it cannot yield a \emph{convergent} sequence of lower bounds to $\alpha$.
The second is based on rational approximations of the logarithm. This more sophisticated method will allow us to obtain a hierarchy of sum-of-squares programs which provably converge to the logarithmic Sobolev constant of a given Markov process (Theorem \ref{thm:hierarchy-main}).

\subsubsection{Taylor series}\label{ref:taylor}
For $k\ge 2$, the Taylor expansion to odd order $2k-1$ of $q(t)=t^2\log(t^2)$ about $t=1$ gives an upper bound on $q(t)$ valid for every $t\ge0$.
To see this, note that by Taylor's theorem, the Lagrange form of the remainder term can be written 
\[q(t)-p_\textrm{Tay}^{2k-1}(t)=\frac{q^{(2k+2)}(\zeta_t)}{(2k+2)!}(t-1)^{2k+2},\] 
for some $\zeta_t$ betwen 0 and $t$.
We have $q^{(3)}(t)=4/t$, hence 
\[q^{(2k+2)}(t)=\frac{4(2k)!(-1)^{2k+1}}{t^{2k}}\]
 for $k \ge 2$.
It follows that \[q(t)- p_\textrm{Tay}^{2k-1}(t)=4(-1)^{2k-1}\frac{(t-1)^{2k+2}}{(2k+1)(2k+2)\zeta_t^{2k}},\] which is nonpositive.

Using $P(x) = \sum_i\pi_ip_\textrm{Tay}^{5}(x_i)$ in the SoS program \eqref{soscheese} will allow us to obtain the exact logarithm Sobolev constant for the simple random walk on the $n$-cycle for small $n$ (see Section \ref{sec:cycle}).
However, in view of the divergence of the truncated Taylor series for $\abs{t-1}>1$, the resulting sequence of sum-of-squares relaxations do not in general converge to the logarithmic Sobolev constant $\alpha$.

\if0
\begin{figure}
    \centering
    \begin{subfigure}{0.49\textwidth}
        \begin{tikzpicture}[]
\begin{axis}[legend pos = {north west}, xlabel = {$t$}, grid=major]\addplot+ [mark = {none}]coordinates {
(0.0, 0.0)
(0.030303030303030304, -0.006421501490296567)
(0.06060606060606061, -0.020594015654042497)
(0.09090909090909091, -0.03963463260823753)
(0.12121212121212122, -0.062008101387594924)
(0.15151515151515152, -0.08664231630084389)
(0.18181818181818182, -0.11271061766865621)
(0.21212121212121213, -0.13953952838961836)
(0.24242424242424243, -0.1665605606360794)
(0.2727272727272727, -0.19328176623425367)
(0.30303030303030304, -0.2192695075247814)
(0.3333333333333333, -0.24413606414846883)
(0.36363636363636365, -0.26753081961744923)
(0.3939393939393939, -0.2891337676342249)
(0.42424242424242425, -0.30865058850842875)
(0.45454545454545453, -0.3258088265968058)
(0.48484848484848486, -0.3403548628871165)
(0.5151515151515151, -0.3520514762746857)
(0.5454545454545454, -0.36067585005836955)
(0.5757575757575758, -0.3660179214147188)
(0.6060606060606061, -0.3678789993847488)
(0.6363636363636364, -0.3660705960894183)
(0.6666666666666666, -0.36041342942947946)
(0.696969696969697, -0.35073656526950936)
(0.7272727272727273, -0.33687667424109446)
(0.7575757575757576, -0.31867738360684056)
(0.7878787878787878, -0.2959887086296029)
(0.8181818181818182, -0.2686665509493263)
(0.8484848484848485, -0.2365722538335365)
(0.8787878787878788, -0.1995722060141143)
(0.9090909090909091, -0.15753748727987585)
(0.9393939393939394, -0.11034355015438367)
(0.9696969696969697, -0.05786993291966162)
(1.0, 0.0)
(1.0303030303030303, 0.06337929366580602)
(1.0606060606060606, 0.13237761713148466)
(1.0909090909090908, 0.20710145928110202)
(1.121212121212121, 0.28765430810345577)
(1.1515151515151516, 0.3741368152200252)
(1.1818181818181819, 0.4666469472409107)
(1.2121212121212122, 0.565280125318512)
(1.2424242424242424, 0.6701293540951114)
(1.2727272727272727, 0.7812853410927282)
(1.303030303030303, 0.8988366074671714)
(1.3333333333333333, 1.0228695909396652)
(1.3636363636363635, 1.1534687416258491)
(1.393939393939394, 1.2907166114010158)
(1.4242424242424243, 1.4346939373701835)
(1.4545454545454546, 1.585479719950432)
(1.4848484848484849, 1.743151296019459)
(1.5151515151515151, 1.907784407537492)
(1.5454545454545454, 2.079453266008551)
(1.5757575757575757, 2.2582306131107748)
(1.606060606060606, 2.4441877777935113)
(1.6363636363636365, 2.637394730110502)
(1.6666666666666667, 2.837920132033282)
(1.696969696969697, 3.0458313854665677)
(1.7272727272727273, 3.2611946776673264)
(1.7575757575757576, 3.484075024251451)
(1.7878787878787878, 3.714536309955907)
(1.8181818181818181, 3.9526413273098866)
(1.8484848484848484, 4.198451813355589)
(1.878787878787879, 4.4520284845476255)
(1.9090909090909092, 4.713431069949556)
(1.9393939393939394, 4.982718342836571)
(1.9696969696969697, 5.259948150804752)
(2.0, 5.545177444479562)
(2.0303030303030303, 5.838462304909122)
(2.0606060606060606, 6.139857969721419)
(2.090909090909091, 6.449418858118713)
(2.121212121212121, 6.767198594777052)
(2.1515151515151514, 7.093250032713945)
(2.1818181818181817, 7.427625275182729)
(2.212121212121212, 7.770375696648121)
(2.242424242424242, 8.121551962893639)
(2.272727272727273, 8.481204050308166)
(2.303030303030303, 8.849381264395703)
(2.3333333333333335, 9.226132257549553)
(2.3636363636363638, 9.611505046129315)
(2.393939393939394, 10.005547026876728)
(2.4242424242424243, 10.408304992704076)
(2.4545454545454546, 10.819825147886675)
(2.484848484848485, 11.240153122689119)
(2.515151515151515, 11.669333987453042)
(2.5454545454545454, 12.107412266172517)
(2.5757575757575757, 12.554431949581664)
(2.606060606060606, 13.010436507777511)
(2.6363636363636362, 13.475468902399918)
(2.6666666666666665, 13.949571598388994)
(2.696969696969697, 14.432786575339373)
(2.727272727272727, 14.925155338469525)
(2.757575757575758, 15.426718929223352)
(2.787878787878788, 15.937517935520278)
(2.8181818181818183, 16.457592501669282)
(2.8484848484848486, 16.986982337961315)
(2.878787878787879, 17.5257267299539)
(2.909090909090909, 18.07386454746097)
(2.9393939393939394, 18.631434253260267)
(2.9696969696969697, 19.19847391153002)
(3.0, 19.775021196025975)
};
\addlegendentry{$t^2\log(t^2)$}
\end{axis}

\end{tikzpicture}
        \caption{The function $t^2\log(t^2)$ for $t\in[0,3]$.}
        \label{fig:t2logt2}
    \end{subfigure}
    \vfill
    \begin{subfigure}[t]{0.49\textwidth}
        \input{png/taylorapprox.tex}
        \caption{Relative error for the order $2k-1$ truncated Taylor series to $t^2\log(t^2)$.}
        \label{fig:taylorrel}
    \end{subfigure}
    \hfill
    \begin{subfigure}[t]{0.49\textwidth}
        \input{png/padeapprox.tex}
        \caption{Relative error for the order $(m+1,m)$ Pad\'e approximant to $\log(t)$. Note that this is the same as the relative error of $2t^2\overline{r_m}(t)$ as an approximation of $t^2\log(t^2)$.}
        \label{fig:paderel}
    \end{subfigure}
       \caption{A comparison of the approximation quality of the truncated-Taylor-series based approximations (B) vs the Pad\'e-approximant based approximations (C) to the function $t^2\log(t^2)$ (A) discussed in the text. Notice that the Taylor approximations get worse with increasing $k$ for $t\gtrapprox 2$, whereas the Pad\'e-approximant based approximation gets uniformly better as $m$ increases.}
       \label{fig:padevstaylor}
\end{figure}
\fi

\subsubsection{Adaptive bounds via Pad\'e approximants}\label{Pade}

The $(m,k)$ \emph{Pad\'e approximant} of a smooth function $g(t)$ around $t=1$ is the rational function
\[r_{m,k}(t)=\frac{R(t)}{S(t)}\]
where $R$ and $S$ are polynomials of degree $m$ and $k$ respectively, such that $S(1)=1$ and such that  derivatives of $r_{m,k}(t)$ at $t=1$ agree with those of $g$ to as high an order as possible. For a generic smooth $g$, this means that derivatives of order ($0, 1\dots, m+k$) agree, i.e. $g(t)-r_{m,k}(t) = O\bigl((t-1)^{m+k+1}\bigr)$.
Despite being defined by the same category of data as the truncated Taylor series' (namely the first few derivatives of $g$ at $t=1$), in may situations Pad\'e approximants can be considered ``better'' approximations of $g$, in the sense that convergence can occur outside of the radius of convergence of the Taylor series, and moreover this convergence may be faster.
As the following result shows, this is indeed the case for the diagonal Pad\'e approximants of the logarithm function.

\begin{thm}\label{thm:main-pade-results}
    For each $m\ge0$ let $\overline{r_m}(t)$ be the $(m+1,m)$ Pad\'e approximant to $\log (t)$ at $t=1$.
    \begin{enumerate}[(i)]
        \item The denominator $S_m(t)$ of the rational function $\overline{r_m}(t)$ has no roots in $(0,\infty)$, hence $S_m$ is positive on this interval.
        \item For each $m\in\N$ there is a constant $B_m$ such that 
        \[0 \leq \overline{r_m}(t) - \log(t) \leq B_m (t-1)^2/t \qquad \forall t > 0, \]
        and such that $B_m \downarrow 0$ as $m\to \infty$. In fact, for $m\ge2$ we can take $B_m=\frac{1}{(m+1)^2}$.
        \item For $t>0$ and $m\ge0$, \[\overline{r_{m+1}}(t) \le \overline{r_m}(t).\]
    \end{enumerate}
\end{thm}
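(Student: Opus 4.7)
My plan is to use the integral representation
\[ \log t = \int_0^1 \frac{t-1}{1+u(t-1)}\,du \]
together with the $(m+1)$-point Gauss--Radau quadrature rule on $[0,1]$ with left endpoint fixed at $u_0=0$. Let $u_1,\dots,u_m\in(0,1)$ be the remaining nodes and $w_0,\dots,w_m>0$ the corresponding weights (summing to $1$). I would first show that
\[ \overline{r_m}(t) = \sum_{k=0}^m w_k\,\frac{t-1}{1+u_k(t-1)}. \]
Expanding $1/(1+u(t-1)) = \sum_{j\ge 0}(-u)^j(t-1)^j$ and using that the $(m+1)$-point Gauss--Radau rule is exact for polynomials of degree at most $2m$, the right-hand side matches $\log t$ to order $2m+1$ in powers of $(t-1)$. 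Combining the fractions shows it has type $(m+1,m)$ (the $u_0=0$ term contributes a pure polynomial piece), so by uniqueness of Pad\'e approximants it must equal $\overline{r_m}$.

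Given this representation, part (i) is immediate: clearing denominators yields $S_m(t)=\prod_{k=1}^m(1+u_k(t-1))$, whose roots $1-1/u_k$ are all negative since $u_k\in(0,1)$, so $S_m>0$ on $(0,\infty)$. For part (ii), the difference $\overline{r_m}(t)-\log t$ is the Gauss--Radau quadrature error for $f(u)=(t-1)/(1+u(t-1))$. Since
\[ f^{(2m+1)}(u) = -(2m+1)!\,\frac{(t-1)^{2m+2}}{(1+u(t-1))^{2m+2}} \le 0 \]
on $[0,1]$ for every $t>0$, the standard Gauss--Radau error formula immediately yields the lower bound $\overline{r_m}(t)\ge\log t$. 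For the quantitative upper bound, I would pass to the Peano-kernel form
\[ \overline{r_m}(t) - \log t = \int_0^1 K_m(u)\,\frac{(t-1)^{2m+2}}{(1+u(t-1))^{2m+2}}\,du, \]
with $K_m\ge 0$ explicit in terms of the Gauss--Radau nodes. The crude bound $1+u(t-1)\ge\min(1,t)$ already gives some $B_m$ with $B_m\downarrow 0$, but the sharp constant $B_m=1/(m+1)^2$ is precisely the Gauss--Radau weight $w_0^{(m)}$ associated with the fixed node, and it matches the leading-order growth $\overline{r_m}(t)\sim w_0^{(m)} t$ at infinity. Proving this sharp rate is the main obstacle; I would attack it using the explicit shifted-Jacobi-polynomial representation of the nodes and splitting the factor $(t-1)^{2m+2}/(1+u(t-1))^{2m+2}$ into one copy of $(t-1)^2/t$ times a bounded factor estimated uniformly in $t$.

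For part (iii), the monotonicity $\overline{r_{m+1}}\le\overline{r_m}$ follows from the classical fact that the $(m+1,m)$ Pad\'e approximants of $\log$ arise as convergents of a Stieltjes-type continued fraction with positive coefficients, so consecutive convergents decrease monotonically to the limit $\log t$ from above on $(0,\infty)$. Alternatively, using the integral representation from (ii) one can write $\overline{r_m}(t)-\overline{r_{m+1}}(t)$ as a positive-kernel integral by exploiting the strict interlacing of the Gauss--Radau nodes $u_k^{(m)}$ and $u_k^{(m+1)}$, a standard property of Gauss-type quadrature. A cross-check is that the leading behaviour near $t=1$ is consistent: the difference $\overline{r_{m+1}}(t)-\overline{r_m}(t)$ has a zero of order exactly $2m+2$ at $t=1$ with strictly negative leading coefficient, since $\overline{r_m}(t)-\log t$ has positive leading term at order $(t-1)^{2m+2}$ while $\overline{r_{m+1}}(t)-\log t$ vanishes to higher order there.
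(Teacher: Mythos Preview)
The paper does not actually prove this theorem: its entire argument is a citation to \cite[Prop.~2.2]{squashedentanglement}, together with the remark that the cited proposition in fact yields the stronger estimate
\[
\overline{r_m}(t) - \log(t) \;\le\; B_m\,\frac{(t-1)^2}{t}\left(\frac{t-1}{t+1}\right)^{2m}.
\]
So there is no in-paper proof to compare to; your proposal supplies what the paper outsources.

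Your approach via the integral representation of $\log$ and Gauss--Radau quadrature is the right one, and is in fact the standard route to these facts (and very likely what is behind the cited proposition). The identification of $\overline{r_m}$ with the $(m{+}1)$-point Gauss--Radau rule anchored at $u_0=0$ is correct, as is your derivation of (i) from the explicit factored denominator and of the lower bound in (ii) from the sign of $f^{(2m+1)}$. Your identification of the sharp constant $B_m=1/(m+1)^2$ with the Gauss--Radau weight $w_0^{(m)}$ at the fixed node, via the linear growth of $\overline{r_m}$ at infinity, is also spot on.

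The one place your sketch is thin is exactly where you flag it: the uniform upper bound with the sharp constant. Your proposed Peano-kernel argument, ``split off $(t-1)^2/t$ and bound the rest uniformly in $t$'', does not work as stated, because the leftover factor $(t-1)^{2m}t/(1+u(t-1))^{2m+2}$ blows up as $t\to\infty$ for $u$ near $0$, so the integrand is not uniformly bounded. One clean way around this is to aim instead for the stronger inequality displayed above (with the extra factor $((t-1)/(t+1))^{2m}$), which is what the paper actually quotes; once that factor is present the remaining estimate becomes uniform. Alternatively, the Stieltjes/continued-fraction machinery you invoke for (iii) also delivers (ii) directly, since for a Stieltjes function the $(m{+}1,m)$ convergents are known to satisfy two-sided bounds of exactly this shape. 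Either route completes the argument; your outline just needs one of them made explicit.
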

\begin{proof}
See \cite[Prop. 2.2]{squashedentanglement}. We remark that the cited proposition shows that $\overline{r_m}(t) - \log(t) \leq B_m \frac{(t-1)^2}{t} (\frac{t-1}{t+1})^{2m}$ so that convergence is exponentially fast on any compact interval of $(0,\infty)$.
\end{proof}

It remains to specify how to use the Pad\'e approximants $\overline{r_m}(t)$ in the sum-of-squares relaxation \eqref{soscheese}. The obvious difficulty is that $\overline{r_m}(t)$ is a rational function, and not a polynomial. To overcome this difficulty, we let the polynomial $P$ (i.e., its coefficients) be itself a decision variable in the optimization problem, subject to the constraint that $P(x)$ upper bounds $\Ent_{\pi}[x^2]$ on $S^{\pi}$. If we consider separable polynomials $P(x)=\sum_i\pi_ip_i(x_i)$, then this gives us the following optimization problem:

\begin{equation}\label{soscheese-vary}
    \min_{\gamma\in \R,\;p_i\in\R[t]_{2d}}\gamma \quad\st \quad \begin{cases} \gamma\mathcal{E}(x,x) - \sum_i\pi_ip_i(x_i) \in \SOS_d(S^{\pi}),\\
     p_i(t) \ge t^2\log(t^2) \quad \forall \,t\in[0,1/\sqrt{\pi_i}],\; \forall\,i\in\cX.\end{cases}
\end{equation}

The coefficients of $p_i$ are now variables of the optimization problems, which are constrained so that the inequality $p_i(t) \ge t^2\log(t^2)$ holds on the appropriate interval. Because we do not have a method to enforce this constraint directly, we replace $\log(t^2)$ by its rational Pad\'e approximant. The key observation is that given a rational function  $R(t)/S(t)$ which is an upper bound for $t^2\log(t^2)$, we can enforce the constraint $p_i(t) \ge R(t)/S(t)$ on the interval $[0,1/\sqrt{\pi_i}]$ by demanding a sum-of-squares certificate for the univariate polynomial  $S(t)p_i(t)-R(t)$. Such a certificate will naturally imply the stronger condition $p_i(t) \ge t^2\log(t^2)$. If we let
\[
\overline{r_m}(t) = \frac{(t-1) R_m(t)}{S_m(t)}
\]
be the $(m+1,m)$ Pad\'e approximant of $\log$ at $t=1$ (where $\deg R_m = \deg S_m = m$), then this yields the following optimization problem:

\begin{equation}\label{main-hierarchy}\tag{$\mathrm{H}_{d,m}$}
\boxed{
    \frac{1}{\alpha_{d,m}} := \min_{\gamma\in \R, \; p_i\in\R[t]_{2d}}\gamma \; : \; \begin{cases} \gamma\mathcal{E}(x,x) - \sum_{i=1}^n \pi_ip_i(x_i) \in \SOS_d(S^{\pi}),\\
        S_m(t) p_i(t) - 2t^2 (t-1) R_m(t) \in \SOS_{d+\lceil{m/2\rceil}}([0,\pi_i^{-1/2}])\\
        \qquad \qquad \qquad\qquad\qquad\qquad \qquad\qquad \;\;\; (i=1,\ldots,n).
        \end{cases}
}
\end{equation}
where $\SOS_{k}([a,b])$ is the set of polynomials of degree at most $2k$  that are nonnegative on the interval $[a,b]$. Since any such polynomial admits a sum-of-squares representation, this set can be described using semidefinite programming (see Section \ref{sec:univariate}).

We note that the semidefinite program \eqref{main-hierarchy} is indexed by two parameters: $d$, which is the degree of the sum-of-squares relaxation, and $m$ which is the degree of the rational approximation to the log function.

The next theorem shows that with large enough $m$ and $d$, one can find a polynomial $p$ of degree $2d$ such that $p_1=\dots=p_n=p$ is feasible for \eqref{main-hierarchy} and such that $\sum_{i} \pi_i p(x_i) \leq (1+\epsilon) \Ent_{\pi}[x^2]$ on $S^{\pi}$.

\begin{thm}\label{thm::poly-approx}
    Let $\pi_1,\dots,\pi_n >0$ be such that $\sum_{i=1}^n\pi_i=1$, and write $\pi_*=\min_i\pi_i$.
    For any $\epsilon \in (0,1)$, there exists $m\in\N$ and a univariate polynomial $p\in\R[t]$ such that the following is true:
    \begin{itemize}
    \item $p(t)\ge 2t^2\overline{r_m}(t)$ whenever $t\in[0,1/\sqrt{\pi_*}]$, and for all $x\in S^{\pi}$; and
    \item $\sum_{i=1}^n \pi_ip(x_i) \le (1+\epsilon)\Ent_\pi[x^2]$ for all $x \in S^{\pi}$.
    \end{itemize}
\end{thm}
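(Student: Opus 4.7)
The plan is to build a polynomial $p$ by uniformly approximating the (rational) function $h_m(t):=2t^2\overline{r_m}(t)$ on the compact interval $[0,1/\sqrt{\pi_*}]$, taking care to preserve the vanishing behaviour of $h_m$ at $t=1$ so that the resulting bound on $\sum_i\pi_i p(x_i)$ degrades only by a multiplicative $(1+\epsilon)$ factor and not an additive constant. The parameter $m$ will be chosen large enough that $h_m$ already lies close to $q(t):=t^2\log(t^2)$, after which a Weierstrass-style approximation plus a tiny quadratic correction produces $p$.

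A key preliminary is a \emph{reverse entropic inequality} on $S^\pi$: there is a constant $C(\pi)<\infty$ such that $\|x-\1\|_\pi^2 \le C(\pi)\,\Ent_\pi[x^2]$ for every $x\in S^\pi$. I would prove this by showing that the ratio $\|x-\1\|_\pi^2/\Ent_\pi[x^2]$ extends continuously to $x=\1$: writing $x=\1+v$ and using that $\sum_i\pi_i v_i = -\tfrac12\|v\|_\pi^2$ on $S^\pi$, together with the Taylor expansion $q(1+u)=2u+3u^2+O(u^3)$, one finds $\Ent_\pi[x^2]=2\|v\|_\pi^2+O(\|v\|_\pi^3)$, so the ratio tends to $1/2$ at $\1$. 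Since both numerator and denominator are continuous and the denominator is strictly positive on $S^\pi\setminus\{\1\}$, compactness of $S^\pi$ bounds the ratio. This is the mechanism that will convert an additive pointwise error of order $(t-1)^2$ into the desired multiplicative bound.

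Next, by Theorem~\ref{thm:main-pade-results}(ii) I would pick $m$ large enough that $0\le h_m(t)-q(t)\le (2B_m/\sqrt{\pi_*})(t-1)^2 \le \eta(t-1)^2$ on $[0,1/\sqrt{\pi_*}]$ for a small $\eta>0$ to be chosen. Because $h_m$ is real-analytic on $(0,\infty)$ with $h_m(1)=0$ and $h_m'(1)=2$, the function
\[
g(t):=\frac{h_m(t)-2(t-1)}{(t-1)^2},\qquad g(1):=\tfrac12 h_m''(1)=3,
\]
is continuous on $[0,1/\sqrt{\pi_*}]$. By the Weierstrass approximation theorem, for any $\eta'>0$ there is a polynomial $\tilde g$ with $\|\tilde g-g\|_\infty\le\eta'$ on the interval, and I would set $p(t):=2(t-1)+(t-1)^2\bigl(\tilde g(t)+\eta'\bigr)$.

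The verification is then essentially algebraic: $p(t)-h_m(t)=(t-1)^2\bigl(\tilde g(t)-g(t)+\eta'\bigr)\ge 0$, giving the first conclusion; combining with the Pad\'e bound gives $0\le p(t)-q(t)\le (2\eta'+\eta)(t-1)^2$ pointwise, whence on $S^\pi$ we obtain $\sum_i\pi_i p(x_i)\le \Ent_\pi[x^2]+(2\eta'+\eta)\|x-\1\|_\pi^2 \le \bigl(1+(2\eta'+\eta)C(\pi)\bigr)\Ent_\pi[x^2]$. One then picks $\eta$ small (large $m$) and $\eta'$ small (sharper Weierstrass approximation) so that $(2\eta'+\eta)C(\pi)\le\epsilon$. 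The step I expect to demand the most care is establishing the reverse entropic inequality at $x=\1$ --- it is exactly this ingredient that upgrades a pointwise error of order $(t-1)^2$ into the required multiplicative factor $(1+\epsilon)$, and it rests on the Taylor analysis above together with the compactness of $S^\pi$.
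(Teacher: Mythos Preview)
Your proof is correct and follows essentially the same route as the paper: choose $m$ large via Theorem~\ref{thm:main-pade-results}, factor out the vanishing at $t=1$ and apply Weierstrass, then convert the $(t-1)^2$-type error into a multiplicative bound via a reverse entropic inequality. The only notable difference is that the paper establishes that last inequality \emph{explicitly} rather than by compactness: from $\log t \ge 1-1/t$ and the constraint $\|x\|_\pi^2=1$ one obtains $\Ent_\pi[x^2]\ge\sum_i\pi_i(x_i-1)^2=\|x-\1\|_\pi^2$ on $S^\pi$ in one line (so in effect $C(\pi)=1$), which replaces your Taylor-plus-compactness argument and makes the whole proof constant-explicit.
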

\begin{proof}
    Write $N_{*}=\pi_*^{-1/2}$, and let $\epsilon_1>0$ be a constant depending on $\epsilon$ and on $N_{*}$ whose value we will determine later. 
    Choose $m\in\N$ large enough that $B_m<\epsilon_1$ (see Theorem \ref{thm:main-pade-results}).
    We obtain that for every $t\ge0$
    \begin{equation}\label{eq:qapprox}
        2t^2\,\overline{r_m}(t)-t^2\log(t^2) \le 2\epsilon_1t(t-1)^2.
    \end{equation}
    
    Applying the Weierstrass Approximation Theorem to the continuous function\footnote{Note that $\overline{r_m}(t) = (t-1) + O((t-1)^2)$ as $t\to 1$ since $\overline{r_m}(t)-\log(t)=O((t-1)^{2m+2})$.} $t\mapsto \frac{t\,\overline{r_m}(t) -(t-1)}{(t-1)^2} + \frac{\epsilon_1}{2}$, 
    we deduce the existence of a polynomial $q$ satisfying
    \begin{equation*}
    0 \le q(t) - \frac{t\,\overline{r_m}(t) -(t-1)}{(t-1)^2} \le \epsilon_1.
    \end{equation*}
    for all $t\in[0,N_{*}]$.
    Now, define $p(t) = 2t(t-1)^2q(t) + 2t(t-1)$, so that
    \begin{equation}
    \label{eq:ubp(t)}
    2t^2\,\overline{r_m}(t) \le p(t) \le 2t^2\,\overline{r_m}(t) + 2\epsilon_1t(t-1)^2
    \end{equation}
    holds for all $t\in[0,N_{*}]$. This polynomial $p$ certainly satisfies the first condition $p(t)\ge 2t^2\overline{r_m}(t)$. Combining \eqref{eq:qapprox} with the upper bound of \eqref{eq:ubp(t)} we have, for $x \in S^{\pi}$
    \begin{equation}\label{eq:qa}
        \sum_{i=1}^{n} \pi_i p(x_i) \le\Ent_{\pi}[x^2] + 4\epsilon_1\sum_{i=1}^n \pi_ix_i(x_i-1)^2.
    \end{equation}
    It remains to bound the last terms in \eqref{eq:qa} by $\epsilon\Ent_\pi[x]$, in order to obtain the desired inequality. For any $x\in S^\pi$, we have
    \begin{align*}\Ent_\pi[x^2] & = \sum\nolimits_i 2\pi_ix_i^2\log(x_i)\\ 
        &\ge \sum\nolimits_i 2\pi_ix_i^2\left(1-\frac{1}{x_i}\right)\\ 
        &= \sum\nolimits_i 2\pi_ix_i(x_i-1)\\ 
        &= \sum\nolimits_i \pi_i(x_i-1)^2 \ge \frac{1}{N_*}\sum\nolimits_i \pi_ix_i(x_i-1)^2,
    \end{align*}
    where we used $\sum_i\pi_ix_i^2=1$ in the penultimate step, and $x_i\le N_*$ in the final step.
    Choosing $\epsilon_1 = \frac{\epsilon}{4N_*}$ completes the proof.
\end{proof}

\subsection{Proof of Theorem \ref{thm:hierarchy-main-intro}}

We now have all the ingredients to prove the main result of the paper.

\begin{thm}\label{thm:hierarchy-main}
    Let $\mathcal{E},\,\pi$ be the Dirichlet form and stationary distribution of an irreducible Markov chain on $\cX=\{1,\dots,n\}$, with log-Sobolev constant $\alpha$.
    For $d\ge2$, $m\ge0$ the sum-of-squares program \ref{main-hierarchy} is feasible, and $0 < \alpha_{d,m}\le\alpha$.
    Moreover,
    \[\lim_{m\to\infty}\lim_{d\to\infty}\alpha_{d,m} = \alpha.\]
\end{thm}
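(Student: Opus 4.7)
The plan is to prove the three claims—$\alpha_{d,m}\le\alpha$, feasibility (whence $\alpha_{d,m}>0$), and the double limit—in that order, assembling the tools already in place (Propositions~\ref{feas}, Theorems~\ref{hierarchy-convergence}, \ref{thm::poly-approx}, \ref{thm:main-pade-results}).

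\textbf{Upper bound.} Suppose $(\gamma,p_1,\dots,p_n)$ is feasible for \eqref{main-hierarchy}. The univariate constraints give $S_m(t)p_i(t)\ge 2t^2(t-1)R_m(t)$ on $[0,\pi_i^{-1/2}]$, and since $S_m>0$ on $(0,\infty)$ by Theorem~\ref{thm:main-pade-results}(i), this reads $p_i(t)\ge 2t^2\,\overline{r_m}(t)$; Theorem~\ref{thm:main-pade-results}(ii) then yields $p_i(t)\ge t^2\log(t^2)$ on that interval. For $x\in S^\pi\cap\R^n_+$ we have $x_i\le\pi_i^{-1/2}$, hence $\sum_i\pi_ip_i(x_i)\ge\Ent_\pi[x^2]$. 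Combined with the main SoS constraint this gives $\gamma\cE(x,x)\ge\Ent_\pi[x^2]$ on $S^\pi\cap\R^n_+$, and thus on all of $\R^n$ via $\cE(|x|,|x|)\le\cE(x,x)$ and homogeneity. Consequently $\gamma\ge 1/\alpha$ and $\alpha_{d,m}\le\alpha$.

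\textbf{Feasibility.} I would exhibit an explicit feasible point for any $d\ge 2$, $m\ge 0$. Differentiating $(t-1)R_m(t)=\overline{r_m}(t)S_m(t)$ at $t=1$ gives $R_m(1)=\overline{r_m}'(1)=1$, while $S_m(1)=1$ by the Padé normalisation; hence $S_m(t)-tR_m(t)=(t-1)U_m(t)$ for some $U_m$ of degree at most $m-1$. Take $p_i(t)=2t(t-1)+C_i(t-1)^2$ with $C_i>0$ to be chosen; a direct computation yields
\[ S_m(t)p_i(t)-2t^2(t-1)R_m(t)=(t-1)^2\bigl[C_iS_m(t)-2tU_m(t)\bigr]. \]
Since $S_m>0$ on the compact set $[0,\pi_i^{-1/2}]$, for $C_i$ sufficiently large the bracket is nonnegative there, making the right-hand side a nonnegative univariate polynomial of degree $m+2\le 2(d+\lceil m/2\rceil)$; by Theorem~\ref{thm:char-uni-nonneg} it lies in $\SOS_{d+\lceil m/2\rceil}([0,\pi_i^{-1/2}])$. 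Moreover $P(x):=\sum_i\pi_ip_i(x_i)$ satisfies $P(\1)=0$ and $\nabla P(\1)=2\pi$, so Proposition~\ref{feas} (invoked through Remark~\ref{rem:Pvanishgrad}) supplies a $\gamma>0$ with $\gamma\cE(x,x)-P(x)\in\SOS_d(S^\pi)$, completing feasibility and giving $\alpha_{d,m}>0$.

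\textbf{Convergence.} Fix $\epsilon\in(0,1)$. Inspecting the proof of Theorem~\ref{thm::poly-approx} (which needs only $B_m$ small, and $B_m\downarrow 0$ by Theorem~\ref{thm:main-pade-results}(ii)), there is $m_0(\epsilon)$ such that for every $m\ge m_0$ one can build a univariate polynomial $p$ with $p(t)\ge 2t^2\,\overline{r_m}(t)$ on $[0,\pi_*^{-1/2}]$ and $\sum_i\pi_ip(x_i)\le(1+\epsilon)\Ent_\pi[x^2]$ on $S^\pi$; Theorem~\ref{thm:main-pade-results}(ii) provides the matching lower bound $\Ent_\pi[x^2]\le\sum_i\pi_ip(x_i)$. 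Applying Theorem~\ref{hierarchy-convergence} to $P(x):=\sum_i\pi_ip(x_i)$ yields $d_1=d_1(\epsilon,m)$ with $\alpha_d^P\ge(1-\epsilon)\alpha$ for $d\ge d_1$. For $d$ also exceeding a threshold depending only on $m$ and $\deg p$, the fixed-degree polynomial $S_m(t)p(t)-2t^2(t-1)R_m(t)$, being nonnegative on each $[0,\pi_i^{-1/2}]$, lies in $\SOS_{d+\lceil m/2\rceil}([0,\pi_i^{-1/2}])$, so $(1/\alpha_d^P,p,\dots,p)$ is feasible for \eqref{main-hierarchy} and $\alpha_{d,m}\ge\alpha_d^P\ge(1-\epsilon)\alpha$. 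The feasible region of \eqref{main-hierarchy} grows with $d$, so $\bar\alpha_m:=\lim_{d\to\infty}\alpha_{d,m}$ exists and belongs to $[(1-\epsilon)\alpha,\alpha]$ for every $m\ge m_0(\epsilon)$; letting $\epsilon\to 0$ gives $\lim_{m\to\infty}\bar\alpha_m=\alpha$.

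\textbf{Main obstacle.} The subtle step is the feasibility argument for arbitrary $d\ge 2$: the Padé upper bound $2t^2\,\overline{r_m}(t)$ is only rational, and the polynomial supplied by Theorem~\ref{thm::poly-approx} has a priori unbounded degree, so it cannot be used for small~$d$. The identity $S_m(t)-tR_m(t)=(t-1)U_m(t)$, which allows the choice $p_i(t)=2t(t-1)+C_i(t-1)^2$ to satisfy the univariate SoS constraint \emph{and} the hypotheses of Proposition~\ref{feas} simultaneously, is what decouples the ambient SoS degree $d$ from the Padé parameter $m$ and makes feasibility available at every $(d,m)$.
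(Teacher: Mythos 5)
Your proposal is correct and follows the same overall architecture as the paper: the bound $\alpha_{d,m}\le\alpha$ by construction, feasibility via an explicit low-degree choice of $p_i$ fed into Proposition~\ref{feas} (through Remark~\ref{rem:Pvanishgrad}), and convergence by combining Theorem~\ref{thm::poly-approx} with Theorem~\ref{hierarchy-convergence}. The one place where you genuinely diverge is the feasibility witness, and your version is actually the more robust one. The paper takes $p(t)=(t-1)(3t^2-2t+1)$ and claims $p(t)-2t^2\overline{r_0}(t)=1$; in fact that difference equals $(t-1)^3$, and $p(0)=-1<0$, so the paper's stated witness does not satisfy the univariate constraint near $t=0$ (e.g.\ at $m=0$), whereas your choice $p_i(t)=2t(t-1)+C_i(t-1)^2$, verified through the factorization $S_m(t)-tR_m(t)=(t-1)U_m(t)$ (which follows from $R_m(1)=S_m(1)=1$), gives $S_m p_i-2t^2(t-1)R_m=(t-1)^2\bigl[C_iS_m(t)+2tU_m(t)\bigr]$, which is indeed nonnegative on $[0,\pi_i^{-1/2}]$ for $C_i$ large. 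Note the sign in your bracket should be $+2tU_m$ with your convention, $\deg U_m=m$ rather than $m-1$, and the resulting degree is $m+3$ rather than $m+2$; all of this is harmless since $m+3\le 2(d+\lceil m/2\rceil)$ for $d\ge2$. An alternative that stays closer to the paper's intent is to use monotonicity of the Pad\'e approximants, $\overline{r_m}\le\overline{r_0}$, and observe that $p_i(t)-2t^2(t-1)=(t-1)^2(C_i-2t)\ge0$ on $[0,\pi_i^{-1/2}]$ once $C_i\ge 2\pi_i^{-1/2}$, which makes a single $m$-independent verification possible; your identity-based route buys the same conclusion without invoking Theorem~\ref{thm:main-pade-results}(iii). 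In the convergence step you re-run the construction of Theorem~\ref{thm::poly-approx} for every $m\ge m_0(\epsilon)$ and use monotonicity in $d$ to define $\bar\alpha_m$, which fills in details the paper leaves implicit (the paper instead appeals, implicitly, to monotonicity in $m$); both are valid, and your explicit check that the fixed-degree univariate certificate fits inside $\SOS_{d+\lceil m/2\rceil}$ once $d$ is large is a point the paper glosses over.
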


\begin{proof}
    To prove feasibility, consider the choice $p_i(t)=p(t)=(t-1)(3t^2 - 2t+1)$ for each $i$. Then for every $t$, we have
    \[
    p(t)-2t^2\overline{r_m}(t)
    \ge
    p(t)-2t^2\overline{r_0}(t)
    =
    p(t)-2t^2(t-1) = 1,
    \]
    since $\overline{r_m}(t)$ is decreasing in $m$ by Theorem \ref{thm:main-pade-results} (ii). Thus these polynomials satisfy the second constraint of \eqref{main-hierarchy}. Furthermore, if we let $P(x) = \sum_{i=1}^n \pi_i p(x_i)$, then we see that $P(\1) = 0$ and $\nabla P(\1) \propto \pi$ so that from Proposition \ref{feas} (see also Remark \ref{rem:Pvanishgrad}) it follows  that $\alpha_{d,m} > 0$ for $d \geq 2$ and $m \geq 0$.

    To prove the second part, let $\epsilon\in(0,1)$ and choose $p\in\R[t]$ and $m\in\N$ as in Theorem \ref{thm::poly-approx}. Let $p_i = p$ for each $i\in\cX$.
    We can now apply Theorem \ref{hierarchy-convergence} with $P(x) = \sum_{i=1}^n \pi_i p(x_i)$, which guarantees the existence of $d_0\ge\frac{1}{2}(\deg p) +1$, such that $\alpha_{d,m} \ge (1-\epsilon)\alpha$ for every $d\ge d_0$.
\end{proof}
\begin{remark}[Size of the semidefinite program \eqref{main-hierarchy}]
\label{rem:size-sdp}
    The semidefinite program corresponding to $\delta_{d,m}$, is over the product of positive semidefinite cones
    \[
    \underbrace{\S_+^{\binom{n+d}{d}} \;\times\; \Bigl(\S_+^{\binom{n+d-1}{d-1}}\Bigr)^n}_{\SOS_d(S^{\pi})}
    \quad \times \quad
    \underbrace{\left( \S_+^{2d+m-3} \right)^n}_{\prod_{i} \SOS_{2d+m}([0,\pi_i^{-1/2}])}
     \]
    which has dimension $O(n^{2d}+m^2n))$  for fixed $d$.
\end{remark}
\begin{remark}[Monotonicity in $d$ and $m$]
    For fixed $d$, $\alpha_{d,m}$ is nondecreasing in $m$ (because the $(m+1,m)$ Pad\'e approximants are nonincreasing in $m$, see Theorem \ref{thm:main-pade-results}).
    Similarly for fixed $m$, $\alpha_{d,m}$ is nondecreasing in $d$, because any feasible $\big(\gamma, (p_i)_i\big)$ for $(\mathrm{H}_{d,m})$ is also valid for $(\mathrm{H}_{d',m})$ with $d'>d$.
    In particular, this means we can (in principle) consider only the diagonal steps of the hierarchy to get a hierarchy $(\alpha_{d,d})_d$ (indexed by a single parameter $d$) which converges monotonically to $\alpha$.
    In practice, since the size of the underlying semidefinite program for $\mathrm{H}_{d,m}$ scales exponentially in $d$, but only polynomially in $m$ (see Remark \ref{rem:size-sdp} above), it is desirable to keep the freedom to increase $m$ and $d$ independently.
\end{remark}

\begin{remark}[Strict feasibility of \eqref{main-hierarchy}]
\label{rem:strictfeas}
Even though we show in Theorem \ref{thm:hierarchy-main} that \eqref{main-hierarchy} is feasible as soon as $d \geq 2$ and $m \geq 0$, the semidefinite program is not, as written, \emph{strictly} feasible. Strict feasibility of a semidefinite program is a very useful property to have both in theory (e.g., for strong duality), and in practice, to avoid numerical issues. Thankfully, one can easily modify the semidefinite program \eqref{main-hierarchy} to make it strictly feasible. There are two reasons why \eqref{main-hierarchy} is not strictly feasible. The first reason is that the polynomial $\gamma \cE(x,x) - \sum_{i} \pi_i p_i(x_i)$ vanishes at $x=\1$ for any feasible $(\gamma, (p_i))$. To make the first constraint of the semidefinite program strictly feasible, it suffices to slightly change the definition of $\SOS_d(S^{\pi})$ in \eqref{eq:SOSOpi} and require that $\sigma_0 \in \Sigma(V_d)$ and $\sigma_1,\ldots,\sigma_n \in \Sigma(V_{d-1})$ where
\[
V_k = \{p \in \RR[x]_{k} : p(\1) = 0\},
\]
and $\Sigma(V_k) = \text{cone}\{ q^2 : q \in V_k \}$ (see \eqref{eq:SigmaV}).
The second reason that \eqref{main-hierarchy} is not strictly feasible, is that the univariate polynomials $S_m(t) p_i(t) - 2t^2(t-1)R_m(t)$ all vanish at $t=1$, so they cannot be in the interior of $\SOS_{d+\lceil m \rceil}(0,\pi_i^{-1/2})$. This can be easily fixed by factoring out a term $(t-1)$ in the definition of $\SOS_{2d+m}([0,\pi_i^{-1/2}])$. We omit the details here for brevity.
\end{remark}

\begin{remark}[A possible alternative approach]
    Instead of searching over \emph{polynomials} $p_i$ which are upper bounds for $2t^2\overline{r_m}(t)$, one could consider simply substituting $P(x)=2\sum_i\pi_ix_i^2\overline{r_m}(x_i)$ in \eqref{soscheese} and searching for a sum-of-squares nonnegativity certificate for 
    \[\gamma\cE(x,x)\prod_iS_m(x_i) - 2\sum_i\pi_ix_i^2(x_i-1)R_m(x_i)\prod_{j\neq i}S_m(x_j)\]
    on the hyperellipse $S^\pi$.
    The main drawback of this approach is that the above polynomial has degree $nm+3$, so we would need to search for certificates in $\QQ_{\frac{nm+3}{2}}(x)$. This means the size of the resulting semidefinite program grows like $\Omega(n^{mn+3})$. Even for very small values of $m,n$, this is impossible to implement in practice.
    In contrast, in our proposed method the semidefinite program has size $O(n^{2d}+m^2n)$, where $d$ is fixed in advance independent of $n$ and $m$. As we will see in section \ref{sec:numerics}, this approach works well in practice.
\end{remark}

\subsection{Exact rational lower bounds}\label{provable}

Computing the lower bounds $\alpha_{d,m}$ requires solving semidefinite programs numerically and at present, all efficient algorithms for solving semidefinite programs use floating point arithmetic. 
Hence the solutions they return are infeasible by a small margin. In this section we briefly describe how these numerical solutions can be turned into formal rational lower bounds on the logarithmic Sobolev constant. The techniques we use are similar to the rational rounding approach of \cite{Peyrl2008}.

The sum-of-squares program \eqref{main-hierarchy} can be put in standard semidefinite programming form as follows:
\begin{equation}\label{modnewsdpcheese}
    \begin{aligned}
        \text{minimize} \; \inp{c}{y} \; \quad \st\qquad& A\begin{pmatrix}y\\z\end{pmatrix}=b, \quad \smat(z)\succeq0\\
                                        &y\in \R^{N_1}, z\in\R^{N_2(N_2+1)/2}
    \end{aligned}
\end{equation}
where $A,b,c$ encodes the problem data, namely the transition kernel $K$ and the invariant distribution $\pi$. Here $\smat$ denotes the vectorization of the upper triangular part of a symmetric matrix. In most chains of interest the transition probabilities (and hence also the stationary probabilities) are rational numbers, so the corresponding data in $A,b,c$ are rational too. 

When a strictly feasible, bounded below SDP in the form \eqref{modnewsdpcheese} is provided as input to an interior-point solver, we can typically expect to obtain a floating point output $(\hat{y},\,\hat{z})$ satisfying 
\begin{equation}
\label{eq:approxfeasible_aff}
    \norm{A\begin{pmatrix}\hat{y}\\\hat{z}\end{pmatrix}-b}_\infty\leq\left(1+\norm{b}_\infty\right)\varepsilon_\mathrm{aff}
\end{equation}
for some constant $\varepsilon_\mathrm{aff}$ (typically $\approx 10^{-8}$). Because the linear equations are only satisfied approximately, we cannot expect that $\inp{c}{\hat{y}}$ is a true upper bound on $\alpha(K)\inv$.

In this case, following \cite{Peyrl2008} we seek to round $(\hat{y},\,\hat{z})$ to rational numbers satisfying the affine constraints exactly.
This can be done in two steps:

\begin{itemize}
\item[1.] First, we convert the floating-point solution $(\hat{y},\,\hat{z})$ to rational numbers. Every floating-point number is already rational, with denominator some power of 2, so this can be done exactly by simply changing the internal representation our computer uses to represent these numbers.
\item[2.] Next, we project $(\hat{y},\,\hat{z})$ onto the affine subspace defined by the constraints of \eqref{modnewsdpcheese}:
\[\mathcal{A}=\left\{(y,\,z) \in \mathbb{Q}^{N_1} \times \mathbb{Q}^{N_2(N_2+1)/2} \mid A\begin{pmatrix}y\\z\end{pmatrix}=b\right\}.\]
The orthogonal projection map onto this space is given by 
\[\Pi_{\mathcal{A}}\begin{pmatrix}y\\z\end{pmatrix} = \begin{pmatrix}y\\z\end{pmatrix} - A^\top\left(AA^\top\right)\inv\left(A\begin{pmatrix}y\\z\end{pmatrix} - b\right).\]
Note that this projection operator uses only rational arithmetic, and can be implemented exactly in a computer algebra system.
Therefore when applied to $(\hat{y}, \hat{z})$, this projection produces a rational point 
\begin{equation}
\label{eq:ystarzstar}
    \begin{pmatrix}y^*\\z^*\end{pmatrix}=\Pi_\mathcal{A}\begin{pmatrix}\hat{y}\\\hat{z}\end{pmatrix}
\end{equation}
satisfying the affine constraints exactly.
\end{itemize}
However the new point $(y^*,z^*)$ may not be feasible for the semidefinite program \eqref{modnewsdpcheese}, in particular $\texttt{VecToSymMat}(z^*)$ may not be positive semidefinite.
To remedy this situation, consider the slightly stronger semidefinite program:
\begin{equation}\label{modnewsdpcheese-eps}
    \begin{aligned}
        \text{minimize }\inp{c}{y} \quad \st\qquad& A\begin{pmatrix}y\\z\end{pmatrix}=b, \quad \smat(z)\succeq \varepsilon I\\
                                        &y\in \R^{N_1}, z\in\R^{N_2(N_2+1)/2},
    \end{aligned}
\end{equation}
where $\varepsilon > 0$ is a parameter. The next proposition shows that by rounding a floating-point solution of \eqref{modnewsdpcheese-eps} with an appropriately chosen parameter $\varepsilon > 0$, one can obtain a rational feasible point to \eqref{modnewsdpcheese} which satisfies the constraints exactly.
\begin{prop}
\label{prop:rounding}
Consider the semidefinite program \eqref{modnewsdpcheese-eps} where $A \in \R^{r\times (N_1 + N_2(N_2+1)/2)}$ is assumed to have full row rank $r$.
Assume $(\hat{y},\hat{z})$ is an approximate rational feasible point of \eqref{modnewsdpcheese-eps} with 
\[
\varepsilon = \sqrt{\frac{2r}{\lambdamin(AA^{\top})}} (1+\|b\|_{\infty}) \varepsilon_{\text{aff}}
\]
i.e.,
\begin{equation}
\label{eq:approxfeasible}
\smat(\hat{z}) \succeq \varepsilon I \text{ and } \|A \begin{sm} \hat{y}\\ \hat{z}\end{sm} - b\|_{\infty} \leq (1+\|b\|_{\infty})\varepsilon_{\text{aff}}.
\end{equation}
Then $(y^*,z^*)$ defined by \eqref{eq:ystarzstar} is a rational feasible point to \eqref{modnewsdpcheese}.
\end{prop}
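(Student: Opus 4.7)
The plan is to show two things about $(y^*,z^*)$: that the affine constraints $A\binom{y^*}{z^*}=b$ hold exactly, and that $\smat(z^*)\succeq 0$. The first is immediate by construction, since $\Pi_{\mathcal{A}}$ is precisely the orthogonal projection onto the affine subspace $\mathcal{A}$, and rational arithmetic suffices to carry out this projection when $A$, $b$, and $(\hat{y},\hat{z})$ are rational. The second property is where the work lies: I need to control how much the projection perturbs $\hat{z}$, and argue that this perturbation is small enough that the strict positive definiteness of $\smat(\hat{z})\succeq \varepsilon I$ is preserved.

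To bound the perturbation, I use the closed form $\binom{y^*-\hat{y}}{z^*-\hat{z}}=-A^\top(AA^\top)^{-1}(A\binom{\hat{y}}{\hat{z}}-b)$. Since $A$ has full row rank $r$, $AA^\top\succ 0$, and a direct calculation gives
\[
\left\|\binom{y^*-\hat{y}}{z^*-\hat{z}}\right\|_2^2 = (A\tbinom{\hat{y}}{\hat{z}}-b)^\top (AA^\top)^{-1}(A\tbinom{\hat{y}}{\hat{z}}-b) \leq \frac{1}{\lambdamin(AA^\top)}\left\|A\tbinom{\hat{y}}{\hat{z}}-b\right\|_2^2.
\]
Combining with $\|\cdot\|_2\le\sqrt{r}\|\cdot\|_\infty$ on $\RR^r$ and the assumed affine residual bound gives
\[
\|z^*-\hat{z}\|_2 \;\leq\; \frac{\sqrt{r}\,(1+\|b\|_\infty)\,\varepsilon_{\text{aff}}}{\sqrt{\lambdamin(AA^\top)}}.
\]

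Next, I convert this into an operator-norm bound on $\smat(z^*-\hat{z})$. Using the fact that for a symmetric matrix $M$ obtained by unpacking the vector $v$ of its upper-triangular entries (each off-diagonal entry appearing once), one has $\|M\|_F^2 = \sum_i M_{ii}^2 + 2\sum_{i<j}M_{ij}^2 \leq 2\|v\|_2^2$, and hence $\|M\|_{\text{op}}\le\|M\|_F\le\sqrt{2}\,\|v\|_2$. Applied to $z^*-\hat{z}$ and combined with the previous estimate, this yields
\[
\|\smat(z^*-\hat{z})\|_{\text{op}} \;\leq\; \sqrt{\tfrac{2r}{\lambdamin(AA^\top)}}\,(1+\|b\|_\infty)\,\varepsilon_{\text{aff}} \;=\; \varepsilon,
\]
exactly matching the choice of $\varepsilon$ in the statement. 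Therefore $\smat(z^*) = \smat(\hat{z}) + \smat(z^*-\hat{z}) \succeq \varepsilon I - \varepsilon I = 0$, completing the proof.

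The main subtlety to get right is the factor of $\sqrt{2}$ coming from the conversion between the $\ell_2$-norm of the vectorized upper triangle and the Frobenius/operator norm of the resulting symmetric matrix; this is precisely what produces the $2r$ (rather than $r$) inside the square root in the definition of $\varepsilon$. The rest is essentially a sequence of elementary norm inequalities, tied together by the assumption $\lambdamin(AA^\top)>0$ guaranteed by the full-row-rank hypothesis on $A$.
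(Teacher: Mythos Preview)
Your proof is correct and follows essentially the same approach as the paper's: both bound the projection displacement via the pseudoinverse of $A$ (you compute $\|A^\top(AA^\top)^{-1}v\|_2^2=v^\top(AA^\top)^{-1}v$ directly, the paper uses the operator norm $\|A^\top(AA^\top)^{-1}\|_\sigma=\lambdamin(AA^\top)^{-1/2}$), pick up the factor $\sqrt{r}$ from $\|\cdot\|_2\le\sqrt{r}\|\cdot\|_\infty$ on the residual, and the factor $\sqrt{2}$ from the $\smat$ vectorization to reach $\|\smat(z^*-\hat{z})\|_{\mathrm{op}}\le\varepsilon$. Your explicit remark about where the $\sqrt{2}$ comes from matches the paper's use of $\|\hat{Z}-Z^*\|_F\le\sqrt{2}\|\hat{z}-z^*\|_2$.
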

\begin{proof}
See Appendix \ref{sec:proofrounding}.
\end{proof}

\if0
\begin{algorithm}[htbp]
    \DontPrintSemicolon
    \SetKwInOut{Input}{input}\SetKwInOut{Output}{output}
    \SetKwFunction{SymMatToVec}{SymMatToVec}
    \SetKwFunction{SolveSDP}{SolveSDP}
    \SetKwFunction{RationalRepresentation}{RationalRepresentation}
    \SetKwFunction{SolveRationalSystem}{SolveRationalSystem}
    \SetKwFunction{VecToSymMat}{VecToSymMat}
    
    \Input{$\bm{K} \in \Q^{n\times n}$ transition matrix, and stationary distribution $\bm{\pi} \in \Q^{n}$ satisfying $\bm{K} \geq 0$ and $\bm{K} \1 = \1$ and $\bm{\pi} \bm{K} = \bm{\pi}$.}
    \Output{$\bm{\underline{\alpha}} \in \Q$, a lower bound on the log-Sobolev constant for $\bm{K}$}
    \texttt{(Boldface denotes rational data)}
    \BlankLine
    \Begin{
        $\bm{A},\;\bm{b}, \; \bm{c}\gets$ data describing the SDP \eqref{modnewsdpcheese} in rational numbers\;
        $\varepsilon \gets \sqrt{\dfrac{2r}{\lambda_\mathrm{min}\left(AA^\top\right)}}\left(1+ \norm{b}_\infty\right)\varepsilon_\mathrm{aff}$\;
        $\left(\hat{y},\,\hat{z}\right) \gets \SolveSDP\left(A,\,b,\,\varepsilon\right)$ \quad \text{(floating points)}\;
        $\begin{pmatrix}\bm{y^*}\\\bm{z^*}\end{pmatrix} \gets \begin{pmatrix}\bm{\hat{y}}\\\bm{\hat{z}}\end{pmatrix} - \bm{A}^\top\,\SolveRationalSystem\left(\bm{A}\bm{A}^\top,\; \bm{A}\begin{pmatrix}\bm{\hat{y}}\\\bm{\hat{z}}\end{pmatrix} - \bm{b}\right)$\;
        Ensure that $\smat(\bm{z^*})$ is positive semidefinite, by attempting a $LDL^{\top}$ factorization\;
        \Return{$\langle \bm{c} , \bm{y^*} \rangle \inv$}
    }
    \SetKwProg{myproc}{subprocedure}{}{}
    \BlankLine
    
    \myproc{\SolveSDP}{
    \Input{$A\in\mathrm{Mat}_\mathbb{F}\left(r,\, N_1 + \binom{N_2+1}{2}\right),\, b\in\mathbb{F}^M, \, c \in \mathbb{F}^{N_1}, \, \varepsilon\in\mathbb{F}$ \textit{(all inputs in floating point)}}
    \Output{$\left(y,\,z\right)\in\mathbb{F}^{N_1}\times\mathbb{F}^{\binom{N_2+1}{2}}$, approximate solution of SDP \eqref{modnewsdpcheese-eps}, satisfying $\|A\begin{sm}y\\ z\end{sm} - b\|_{\infty} \leq (1+\|b\|_{\infty})\epsilon_{\text{aff}}$.}
    }
    \BlankLine
    
    \myproc{\SolveRationalSystem}{
    \Input{$\bm{M}\in\mathrm{GL}_{d_1}(\Q),\,\bm{v}\in\Q^d_1$}
    \Output{$\bm{u}\in\Q^d_1 \text{ such that } \bm{Mu}=\bm{v}$}
    }
    \caption{\texttt{ComputeLowerBoundLogSobolev}}\label{genprop}
\end{algorithm}
\fi

\section{Beyond logarithmic Sobolev inequalities}
\label{sec:mlsi}

In this section we show how the techniques developed in the previous section can be applied for other entropic functional inequalities. We show how to use sum-of-squares relaxations combined with Pad\'e rational approximants to obtain bounds on \emph{modified logarithmic Sobolev constants} (also known as the entropy constant), and \emph{strong data processing inequalities}.

\subsection{Algorithmic bounds on the entropy constant}

Given an irreducible Markov kernel $K$ with stationary distribution $\pi$, as an alternative to \eqref{eq:lsineq}, one can try to prove \emph{modified} logarithmic Sobolev inequalities (MLSI) \cite{Ledoux2001} of the form
\begin{equation}\label{eq:mlsi-orig}
    \cE(x, \log x) \ge \rho \Ent_\pi[x]\qquad  \forall x\in \R^n_+.
\end{equation}
The largest constant $\rho$ for which \eqref{eq:mlsi-orig} holds is called the \emph{entropy constant} of the Markov semigroup $P_t$ associated to $K$ via $P_t=e^{t(K-I)}$.
Modified log-Sobolev inequalities are important because it can be shown \cite[Theorem 2.4]{Tetali2006} that the entropy constant has an equivalent characterization as the best constant $\rho$ such that 
\begin{equation}\label{eq:mlsi-relent-contracts}
    D(\mu_t \Vert \pi) \le e^{-\rho t}D(\mu_0 \Vert \pi)
\end{equation}
holds for all initial distributions $\mu_0$ on $\cX$, where $\mu_t:=P_t\mu_0$. Therefore, the entropy constant of a Markov semigroup bounds the \emph{relative entropy} mixing time of the corresponding Markov process.

For every irreducible kernel $K$, one has $2\alpha(K)\le \rho(K) \le 2\lambda(K)$, and if $K$ is reversible, the first inequality can be strengthened to $4\alpha(K)\le \rho$ \cite[Proposition 1.10 \& Remark 1.11]{Montenegro2006}. Thus the log-Sobolev constant always provides a lower bound on the entropy constant. However, one can have $\alpha \ll \rho$;  for example in the case of the random walk on the group of permutations $S_n$ of $[n]$ which at each jump moves by a uniformly sampled transposition, see \cite[Example 3.12]{Tetali2006}.

Note that for any $x\in\R^n_{++}$ and $\gamma>0$, we have 
\[\cE(\gamma x, \log \gamma x) = \gamma \cE(x, \log x + \log(\gamma)\1) = \gamma \cE(x, \log x) + \log(\gamma)\cE(x, \1)=\gamma \cE(x, \log x),\]
so by homogeneity we may restrict the variable $x$ appearing in \eqref{eq:mlsi-orig} to the simplex
\[\Delta^\pi := \{x\in\R^n_+ : \E_\pi[x]=1\}.\]
We can now rearrange \eqref{eq:mlsi-orig}, using the definitions of $\cE$ and $\Ent_\pi$, to obtain the equivalent inequality
\begin{equation}\label{eq:mlsi-reform}
    \sum_{i,j\in\cX} \pi_i K_{ij} x_i [\bar\rho \log x_i - \log x_j] \ge0\qquad \forall x\in\Delta^\pi,
\end{equation}
where $\bar\rho:=1-\rho$.

Suppose that for each pair $(i,j)\in\cX\times\cX$, $p_{ij}(t, s)$ is a \emph{bivariate} polynomial satisfying
\begin{equation}\label{eq-bivar-mlsi}
    p_{ij}(t, s) \le \bar\rho \,t\log t - t\log s\qquad\forall(t,s) \in [0, \pi_i\inv]\times[0,\pi_j\inv],
\end{equation}
for some fixed $\bar\rho > 0$.
Then the polynomial inequality 
\begin{equation}\label{eq:poly-mlsi}
    \sum_{i,j\in\cX} \pi_i K_{ij} p_{ij}(x_i, x_j) \ge 0 \qquad \forall x\in\Delta^\pi
\end{equation}
implies \eqref{eq:mlsi-reform}.
In order to construct sum-of-squares relaxations of this type of inequality, we introduce the set
\begin{equation}\label{eq:SOSDelta}
\begin{aligned}
\SOS_d(\Delta^{\pi}) := \Biggl\{ f \in \RR[x]_{2d} : & \; f(x) = \sigma_0(x) + \sum_{i=1}^n x_i \sigma_i(x) + \phi(x) (\E_\pi[x]-1)\\
& \qquad\qquad \text{ where } \sigma_0 \in \Sigma[x]_{2d}, \; \sigma_i \in \Sigma[x]_{2d-2} \;\;(i=1,\ldots,n)\\
& \qquad \qquad \qquad \qquad \phi \in \RR[x]_{2d-1} \Biggr\}.
\end{aligned}
\end{equation}
$\SOS_d(\Delta^\pi)$ is the degree-$d$ sum-of-squares relaxation of the set of polynomials which are nonnegative on $\Delta^\pi$.
The following sum-of-squares programs, indexed by $d\ge \frac{1}{2} \max_{i,j} \{\deg p_{ij}\}$, are a sequence of increasingly general sufficient conditions for \eqref{eq:poly-mlsi}:
\begin{equation}\label{eq:sos-mlsi}
    \sum_{i,j\in\cX} \pi_i K_{ij} p_{ij}(x_i, x_j) \in \SOS_d(\Delta^\pi).
\end{equation}

Now we consider how to obtain sum-of-squares certificates of the bivariate inequality \eqref{eq-bivar-mlsi}. In Section \ref{Pade}, we saw that the rational functions $\overline{r_m}(s)=\frac{(s-1)R_m(s)}{S_m(s)}$ [the $(m+1,m)$ Pad\'e approximants to $\log(s)$ around $s=1$] are a convergent sequence of upper bounds for $\log(s)$.
In order to prove inequalities of the form \eqref{eq-bivar-mlsi}, we will also need rational \emph{lower bounds} on $\log(t)$. Observe that
$\log(t) = -\log(1/t) \ge -\overline{r_m}(1/t)$.
Therefore \[t\log(t) \ge -t\,\overline{r_m}(1/t)= \frac{(t-1)\overline{R_m}(t)}{\overline{S_m}(t)},\] where $\overline{S_m}(t):=t^mS_m(1/t)$ and $\overline{R_m}(t):=t^mR_m(1/t)$ are degree-$m$ polynomials.

A sufficient condition for \eqref{eq-bivar-mlsi} is therefore 
\[p_{ij}(t, s) \le - \bar\rho \,t\,\overline{r_m}(1/t) - t\,\overline{r_m}(s) = \bar\rho\frac{(t-1)\overline{R_m}(t)}{\overline{S_m}(t)} - \frac{t(s-1)R_m(s)}{S_m(s)} \qquad\forall(t,s) \in [0, \pi_i\inv]\times[0,\pi_j\inv].\]
Clearing denominators (recalling that the polynomial $S_m(t)$ is positive for $t\ge 0$), this is equivalent to the polynomial inequality
\[\bar\rho(t-1)\overline{R_m}(t)S_m(s) - t(s-1)R_m(s)\overline{S_m}(t) - p_{ij}(t, s)\overline{S_m}(t)S_m(s) \ge 0 \qquad\forall(t,s) \in [0, \pi_i\inv]\times[0,\pi_j\inv].\]
This polynomial inequality can be enforced as a sum-of-squares constraint using the set 
\begin{equation*}
    \SOS_k\Bigl([0, \pi_i\inv]\times[0,\pi_j\inv]\Bigr) := \Bigl\{  \sigma_0 + t\,(1-\pi_it)\sigma_1 + s(1-\pi_j s)\sigma_2 \;\mid\; \sigma_0 \in \Sigma[t,s]_{2k}, \; \sigma_1,\sigma_2 \in \Sigma[t,s]_{2k-2} \Bigr\}
    \end{equation*}
of degree-$k$ sum-of-squares relaxations of the set of polynomials which are nonnegative on $[0, \pi_i\inv]\times[0,\pi_j\inv]$.

We arrive at the following sum-of-squares optimization problem, parametrized by positive integers $d, m$:
\begin{equation*}
    \boxed{
        \rho_{d,m} := 1 - \min_{\bar\rho\in \R, \; p_{ij}\in\R[t,s]_{2d}}\bar\rho \; : \; \begin{cases} \sum_{i,j\in\cX} \pi_i K_{ij} p_{ij}(x_i, x_j) \in \SOS_d(\Delta^\pi),
            \vspace{2mm}\\
            \bar\rho(t-1)\overline{R_m}(t)S_m(s) - t(s-1)R_m(s)\overline{S_m}(t) - p_{ij}(t, s)\overline{S_m}(t)S_m(s) \\ \qquad\qquad\qquad {} \in \SOS_{d+m}\Bigl([0, \pi_i\inv]\times[0,\pi_j\inv]\Bigr)
             \qquad (i,j=1,\ldots,n).
            \end{cases}
    }
\end{equation*}
For every $m,d$, the numbers $\rho_{d,m}$ are upper bounds for the true entropy constant $\rho(K)$, and $\rho_{d,m}$ can be computed as the solution to a semidefinite program of size $O(n^{2d}+m^4n^2)$ for fixed $d$.

\subsection{Strong data processing inequalities}\label{sec:sdpi}
\newcommand{\cY}{\mathcal{Y}}
A fundamental inequality in information theory is the \emph{data processing inequality}, which states that if $W:\cX\to\cY$ is a channel, then for any two input distributions $\mu,\pi$ on $\cX$ we have
\[
D(\mu W\| \pi W) \leq D(\mu\|\pi).
\]
For a specific channel $W$ and reference input distribution $\pi$, such an inequality can usually be strengthened. We say that the pair $(\pi,W)$ satisfies a \emph{strong data processing inequality (SDPI)} if there is a constant $\delta < 1$ such that
\begin{equation}\label{eq:SDPI1}
D(\mu W\| \pi W) \leq \delta D(\mu\|\pi)
\end{equation}
for all probability distributions $\mu$ on $\cX$ \cite{Ahlswede76,Polyanskiy17}. 
The smallest such $\delta=\delta^*(\pi,W)$ is the SDPI constant of $(\pi,W)$.
For example, the binary symmetric channel with noise $\epsilon$ and a uniform source is known to have SDPI constant $\delta^*(\mathrm{Bern}(\frac{1}{2}), \mathrm{BSC}_\epsilon)=(1-2\epsilon)^2$ \cite{Ahlswede76}. 
Strong data processing inequalities, and more generally the contraction properties of discrete channels, have received a lot of attention recently in the information theory community \cite{Raginsky13,courtade2013outer,anantharam2014hypercontractivity,Raginsky15,Polyanskiy17,polyanskiy2020application,Polyanskiy21}, and have been applied e.g., to obtain various converse results.

The similarity of \eqref{eq:SDPI1} and \eqref{eq:mlsi-relent-contracts} suggests a connection between LSI/MLSI and SDPI. Indeed, such a connection was made precise by Raginsky in \cite{Raginsky13}. First, one defines the cascade channel $W^\sharp W : \cX\to \cX$ by 
$W^\sharp W(j | i) = \sum_{k\in\cY}\frac{W(k| i)W(k | j) \pi_j}{(\pi W)_k}$ for $i,j\in\cX$.
$W^\sharp W$ satisfies the detailed balance equations with respect to $\pi$, so it is a $\pi$-reversible Markov kernel on $\cX$. Then one has $\alpha(W^\sharp W) \le 1-\delta^*(\pi, W) \le \rho(W^\sharp W)$ \cite[Theorems 3 \& 4]{Raginsky13}.

In the paper \cite{sdpi-isit}, the authors present a hierarchy of semidefinite programming relaxations  which give certified upper bounds on the strong data processing (SDPI) constant of a discrete channel. Moreover it is shown that the hierarchy converges to the true SDPI constant. The relaxations developed in \cite{sdpi-isit} should be seen as the SDPI analogues of the relaxations developed in the present work for log-Sobolev and modified log-Sobolev inequalities. 

\section{Numerical results}\label{sec:numerics}

In this section we numerically illustrate the ideas developed in the previous sections.
In cases where the exact log-Sobolev constant $\alpha$ is not known (which are all cases in this section apart from subsection \ref{complete}) we use a method developed in section \ref{upper_bounds} to find an upper bound $\overline \alpha$ for $\alpha$ in order to quantify approximation accuracy.

In each experiment (sections \ref{complete} through \ref{sec::3point}) we set $2d=6$ (the degree of the polynomials $p_i$ bounding the logarithmic term) and use the Pad\'e relaxation approach of section \ref{Pade} with $m=5$ (the degree of the Pad\'e upper bound).

\subsection{Obtaining upper bounds for comparison}\label{upper_bounds}
The exact log-Sobolev constant is unknown for most finite state Markov chains.
In order to judge the accuracy of our sum-of-squares method, we will need a way of obtaining good upper bounds on the log-Sobolev constant of a given Markov chain.
One way to do this is to search for $x\in\R^n$ for which
\[ \Psi(x) := \frac{\mathcal{E}(x,x)}{\Ent_\pi[x^2]} = \frac{\sum_{i,j\in\cX}\pi_iK_{ij}(x_i-x_j)^2}{\sum_{i\in\cX} \pi_i x_i^2\log\bigl(\frac{x_i^2}{\norm{x}_\pi^2}\bigr)}\]
is small.
Indeed the log-Sobolev constant of the chain under consideration is exactly $\alpha = \inf_{x\in\R^n} \Psi(x)$.

This minimization problem is nonconvex and difficult in general. We use Newton's method to perform a local optimization, initialized at a randomly chosen initial point $x^*$, to obtain a local minimizer $x^*_\mathrm{min}$.
This is repeated $R$ times, with different initializations, and the algorithm returns the best $x^*_\mathrm{min}$ found.
We then take $\overline{\alpha} = \Psi(x_\mathrm{min})$, a valid upper bound on the real log-Sobolev constant $\alpha$.
Experimentally, we find that $R=100$ is usually sufficient for random Markov chains on $n=10$ vertices (see the numerical results in section \ref{sec:random}).

\subsection{The complete graph}\label{complete}

The log-Sobolev constant for the simple random walk on the complete graph with $n$ vertices is known to be $\alpha=\frac{n-2}{(n-1)\log (n-1)}$ (see \cite[Corollary A.4]{P.Diaconis1996}).
In Table \ref{tbl:compl} lower bounds obtained from a Pad\'e relaxation with $d=3$ and $m=5$ are listed, along with the relative error of this approximation, defined in this case as $\epsilon_\mathrm{rel} = \alpha/\alpha_{3,5} - 1$.
For comparison, we have also tabulated the relative errors of a truncated Taylor series based approximation, as described in section \ref{ref:taylor}.

While performing the computations, no attempt was made to take advantage of the large amount of symmetry in the structure of these Markov chains. Doing this would have sped up computations significantly, but requires some effort to implement.
In section \ref{sec:cycle}, we will see an example where symmetry is exploited in order to compute the log-Sobolev constant for chains on larger state spaces than we consider here.

\begin{table}[ht]
    \centering
	\begin{tabular}[b]{|c|l|l||l|}
            \hline
            $n$   & $\alpha_{3,5}$  & $\epsilon_\mathrm{rel}$ (for $\alpha_{3,5}$) & $\epsilon_\mathrm{rel}$ (Taylor)\\
            \hline
            3   & 0.72134751987  & $7.96 \times 10^{-10}$ & $2.76\times 10^{-4}$\\
            4   & 0.6068261485   & $4.25 \times 10^{-9}$ & $2.09\times 10^{-3}$\\
            5   & 0.541010629    & $2.16 \times 10^{-8}$ & $6.21\times 10^{-3}$\\
            6   & 0.497067908    & $7.95 \times 10^{-8}$ & $1.28\times 10^{-2}$\\
            7   & 0.46509209     & $2.22 \times 10^{-7}$ & $2.18\times 10^{-2}$\\
            8   & 0.44048407     & $5.06 \times 10^{-7}$ & $3.31\times 10^{-2}$\\
            9   & 0.4207856      & $1.02 \times 10^{-6}$ & $4.62\times 10^{-2}$\\
            10  & 0.4045500      & $1.85 \times 10^{-6}$ & $6.12\times 10^{-2}$\\
            11  & 0.3908638      & $3.13 \times 10^{-6}$ & $7.76\times 10^{-2}$\\
            12  & 0.3791184      & $5.06 \times 10^{-6}$ & $9.51\times 10^{-2}$\\
            13	& 0.3688909	     & $7.81 \times 10^{-6}$ & $1.14\times 10^{-1}$\\
            \hline
        \end{tabular}
    \caption{Computed lower bounds on the log-Sobolev constant of $K_n$. The rightmost column gives the relative error coming using a degree-5 Taylor series approximation.}
    \label{tbl:compl}
\end{table}


\subsection{The Petersen graph}

\begin{figure}[ht]
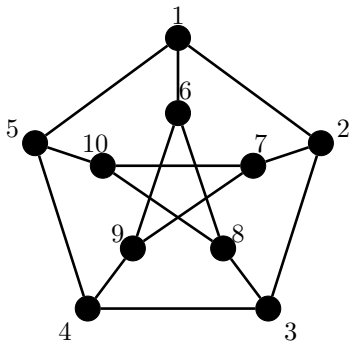

    \ctikzfig{png/petersen}
    \centering
    \caption{The Petersen graph}
    \label{fig:petersen}
\end{figure}

The Petersen graph is a famous graph with 10 vertices and 15 edges. 
It exhibits a lot of symmetry -- its automorphism group is isomorphic to $S_5$.
We consider the simple random walk on this graph.
Its spectral gap is $\lambda=2/3$.
Using a Pad\'e relaxation with $d=3$ and $m=5$, we obtain
\[\alpha_{3,5} = 0.306638... \approx 0 . 4 5 9 9 5 7\cdot\lambda.\]
Using the technique discussed in Section \ref{upper_bounds}, we obtain the upper bound
\[\overline \alpha = 0.306652... \approx 0 . 4 5 9 9 7 9\cdot\lambda.\]
We conclude that for the Petersen graph $\alpha\in (0.306638, 0.306652)$.

\subsection{Random graphs}\label{sec:random}

We generate 6 random graphs on 10 vertices with 15, 20, 25, 30, 35, and 40 edges respectively, as follows: the first graph is chosen uniformly at random among all graphs with 10 vertices and 15 edges. For each new graph, 5 new edges are selected uniformly at random.

\begin{figure}[htbp]
    \centering
    \begin{subfigure}[b]{0.25\textwidth}
        \caption{15 edges}
        \includegraphics[height=3cm]{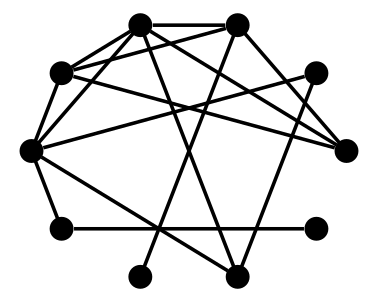}
        \label{rand:15}
    \end{subfigure}
    \hspace{0.05\textwidth}
    \begin{subfigure}[b]{0.25\textwidth}
        \caption{20 edges}
        \includegraphics[height=3cm]{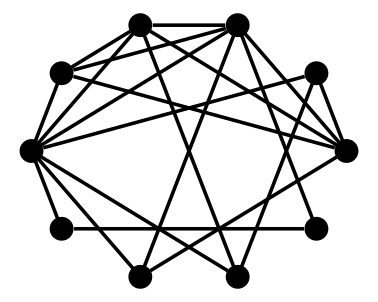}
        \label{rand:20}
    \end{subfigure}
    \hspace{0.05\textwidth}
    \begin{subfigure}[b]{0.25\textwidth}
        \caption{25 edges}
        \includegraphics[height=3cm]{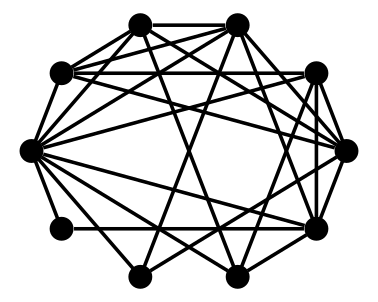}
        \label{rand:25}
    \end{subfigure}
    
    \begin{subfigure}[b]{0.25\textwidth}
        \caption{30 edges}
        \includegraphics[height=3cm]{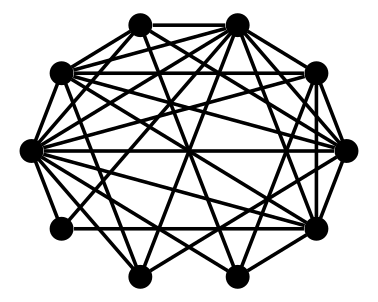}
        \label{rand:30}
    \end{subfigure}
    \hspace{0.05\textwidth}
    \begin{subfigure}[b]{0.25\textwidth}
        \caption{35 edges}
        \includegraphics[height=3cm]{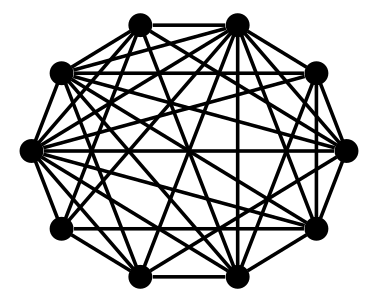}
        \label{rand:35}
    \end{subfigure}
    \hspace{0.05\textwidth}
    \begin{subfigure}[b]{0.25\textwidth}
        \caption{40 edges}
        \includegraphics[height=3cm]{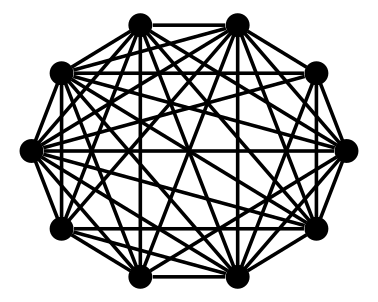}
        \label{rand:40}
    \end{subfigure}
    \caption{Random graphs with 10 vertices}\label{fig:rand}
\end{figure}

The corresponding sum-of-squares lower bounds $\alpha_{3,5}$ are listed in Table \ref{tab:rand}.
Upper bounds $\overline \alpha$ are obtained using the method in Section \ref{upper_bounds}, with $R=100$.

\begin{table}[htbp]
    \begin{center}
        \caption{Upper and lower bounds on log-Sobolev constants for simple random walk on the graphs in Figure \ref{fig:rand}}
        \label{tab:rand}
        \begin{tabular}{|c|c|c|c|c|c|c|}
            \hline
            Graph (c.f. Fig \ref{fig:rand})  &   Edges  &   $\lambda/2$  &  $\alpha_{3,5}$  &  $\overline \alpha$   & $\epsilon_\mathrm{rel}  $\\
            \hline
            (A)  &   15  &      0.11415158  &  0.09708470  &  0.09708648  &  $1.83 \times 10^{-5}$\\
            (B)  &   20  &      0.23223992  &  0.18379917  &  0.18380145  &  $1.24 \times 10^{-5}$\\
            (C)  &   25  &      0.32038757  &  0.27180172  &  0.27182406  &  $8.22 \times 10^{-5}$\\
            (D)  &   30  &      0.37911247  &  0.30284158  &  0.30285960  &  $5.95 \times 10^{-5}$\\
            (E)  &   35  &      0.41522081  &  0.34270699  &  0.34271490  &  $2.31 \times 10^{-5}$\\
            (F)  &   40  &      0.43176480  &  0.35569664  &  0.35570480  &  $2.29 \times 10^{-5}$\\
            \hline
        \end{tabular}
    \end{center}
\end{table}

\subsection{The three-point stick}\label{sec::3point}

\begin{figure}[htbp]
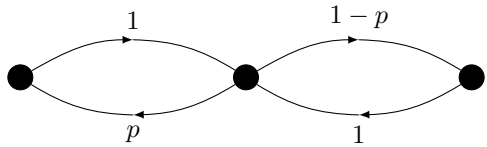

    \centering
    \ctikzfig{png/3stick}
    \caption{A Markov chain on the 3-point stick.
    }\label{3stick}
\end{figure}

Consider the family of three-point Markov chain in Figure \ref{3stick}, parametrized by $p\in(0,1/2)$.
It satisfies $\lambda=1$ independently of $p$.
Similar families of chains on the three-point space were studied in \cite{Chen2008} (although not this one).
For $p=1/2$, our chain coincides with one studied in \cite[Theorem 4.1]{Chen2008}, where it was shown that $\alpha=1/2$.

We obtain lower bounds $\alpha_{3,5}$ and upper bounds $\overline \alpha$ on the log-Sobolev constant for a range of values of $p$ in $[0.001,\, 0.49]$.
Once again, we take $d=3$ and $m=5$ when computing the sum-of-squares relaxation $\alpha_{3,5}$.
These are plotted in Figure \ref{3stickresults}.
The relative error (defined as $\epsilon_\mathrm{rel}=\frac{\overline \alpha - \alpha_{3,5}}{\alpha_{3,5}}$) ranges from $\epsilon_\mathrm{rel}=1.57\times10^{-8}$ for $p=0.49$ to $\epsilon_\mathrm{rel}=7\times10^{-2}$ for $p=0.001$.

\begin{figure}[htbp]
    \centering
    \includegraphics[width=0.7\textwidth]{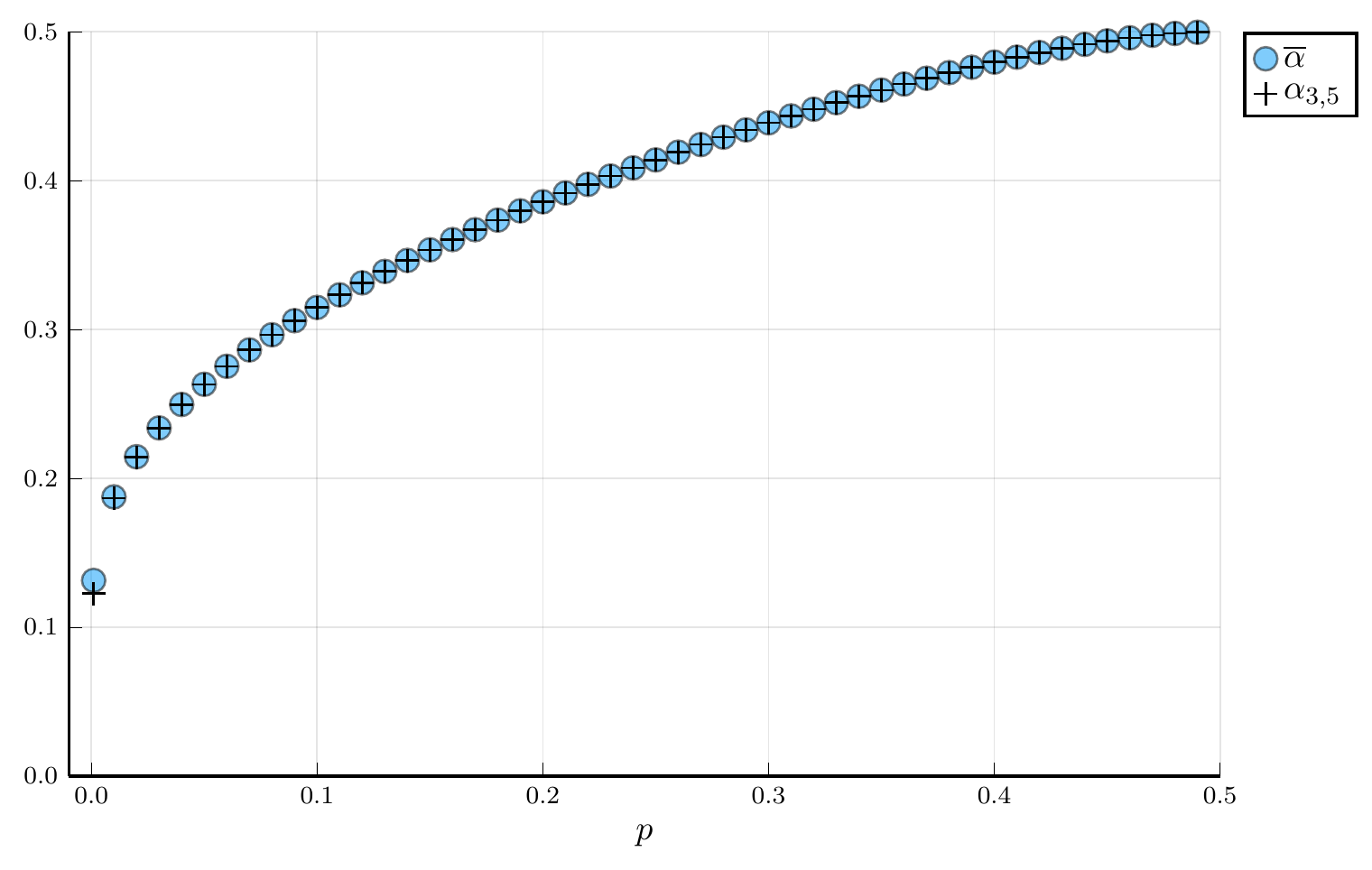}
    \caption{
        Upper and lower bounds on the log-Sobolev constant of the Markov chain shown in Figure \ref{3stick}, parametrized by $p$.
        The leftmost value of $p$ is 0.001, and the other values shown in this plot are $p=i/100,\; i=1, \dots, 49$. 
    }
        \label{3stickresults}
\end{figure}

\section{Exact log-Sobolev constants for the odd $n$-cycle}\label{sec:cycle}

Consider the simple random walk on the $n$-cycle, $n\ge 3$ defined by the transition kernel
\[
K_{i, i\pm 1} = 1/2 \;\; \forall i \in \ZZ_n
\]
and $K_{ij} = 0$ otherwise. This chain has Dirichlet form 
\[\mathcal{E}(x,x)=\frac{1}{2n}\sum_{i\in\Z_n}(x_i - x_{i+1})^2,\]
its stationary distribution is the uniform distribution $\pi=\1/n$, and it has spectral gap $\lambda=1-\cos\bigl(\frac{2\pi}{n}\bigr)$.
The log-Sobolev constant is known for $n$ even \cite{Chen2003}, and when $n=5$ \cite{Chen2008}, in which case it is equal to half the spectral gap. Its value was not yet known for odd $n$ larger than 5.
We will show how the technique developed in Section \ref{sec:approach} of this paper for obtaining lower bounds on logarithmic Sobolev constants can use to prove the following:
\begin{thm}
The logarithmic Sobolev constant for the simple random walk on the $n$-cycle is equal to $\frac{\lambda}{2}=\frac{1}{2}(1-\cos\frac{2\pi}{n})$ for $n\in\{5,7,9,\dots,21\}$.
\end{thm}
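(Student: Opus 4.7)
The bound $\alpha \le \lambda/2$ is the classical comparison inequality of Diaconis--Saloff-Coste \cite[Lemma 3.1]{P.Diaconis1996}. The task is therefore to prove the matching lower bound $\alpha \ge \lambda/2$ for each odd $n\in\{5,7,\dots,21\}$, by exhibiting an explicit sum-of-squares certificate witnessing that $\gamma = 2/\lambda$ is feasible for the program \eqref{soscheese}.

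For the polynomial majorant of the entropy I would take the separable Taylor bound
\[
P(x) \;=\; \frac{1}{n}\sum_{i\in\ZZ_n} p^{5}_{\text{Tay}}(x_i),
\]
where $p^5_{\text{Tay}}$ is the order-$5$ Taylor expansion of $q(t)=t^2\log(t^2)$ about $t=1$. By the discussion in Section~\ref{ref:taylor}, $p^5_{\text{Tay}}(t)\ge q(t)$ for all $t\ge 0$, so an SOS certificate of $\tfrac{2}{\lambda}\mathcal{E}(x,x) - P(x) \in \SOS_d(S^\pi)$ at the minimum admissible level $d=3$ (since $\deg P = 6$) would imply $\alpha \ge \lambda/2$ by the reduction at the start of Section~\ref{sec:approach}. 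The Pad\'e route of Section~\ref{Pade} is not suitable for this purpose, because $p^5_{\text{Tay}}$ has higher-order contact with $q$ at $t=1$ than any $\overline{r_m}$, and such high-order contact is exactly what is needed for the SOS relaxation to be tight at the saturated value $\alpha=\lambda/2$.

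To solve the resulting SDPs at the sizes required, I would exploit the dihedral symmetry group $D_n$ of the cycle: the Dirichlet form $\mathcal{E}$, the polynomial $P$, and the constraints $\|x\|_\pi^2=1$ and $x_i\ge 0$ are all $D_n$-invariant, so by averaging Gram matrices one may restrict the search to $D_n$-equivariant decompositions. Block-diagonalising via the isotypic components of the regular representation of $D_n$ shrinks \eqref{soscheese} to a product of small blocks indexed by the irreducible representations, bringing $n\le 21$ within reach numerically. Once a floating-point certificate has been obtained, it must be converted to an exact object: the rational rounding procedure of Section~\ref{provable} applies, but performed over the number field $K_n = \mathbb{Q}(\cos(2\pi/n))$ rather than $\mathbb{Q}$, since $2/\lambda$ is algebraic but not rational. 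Equivalently, clear denominators and search for an SOS certificate of $2\mathcal{E}(x,x) - \lambda P(x)$ with Gram matrix entries in $K_n$; the projection step of Proposition~\ref{prop:rounding} is then an exact linear-algebra computation over $K_n$.

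The main obstacle is not conceptual but the absence of any a priori guarantee that a degree-$d=3$ certificate exists for each $n$ in the range. Because the Poincar\'e inequality is saturated ($\lambda = 2\alpha$), the polynomial $\tfrac{2}{\lambda}\mathcal{E}(x,x) - P(x)$ is only marginally nonnegative on $S^\pi$; its Hessian at $x=\1$ acquires kernel directions along the Poincar\'e eigenspace in addition to the expected tangential direction, so Theorem~\ref{thm:nie} does not directly produce a certificate, and feasibility at low $d$ is a case-by-case arithmetic fact to be verified. Each $n\in\{5,7,\dots,21\}$ must therefore be checked individually; the $D_n$-symmetry reduction together with exact rounding over $K_n$ makes this feasible throughout the stated range, and the cutoff at $n=21$ reflects the point at which either the conditioning of the reduced SDP or the size of the $K_n$-rational linear systems exceeds what we can verify exactly with standard computer-algebra tools.
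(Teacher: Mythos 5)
Your overall route is the same as the paper's: the degree-$5$ Taylor majorant $p^5_{\mathrm{Tay}}$, a degree-$3$ sum-of-squares certificate for $\frac{2}{\lambda}\mathcal{E}(x,x)-P(x)$ on the sphere, dihedral symmetry reduction to block-diagonalize the Gram matrix, and exact rounding over $\Q[\cos(2\pi/n)]$. But there is one genuine gap: you invoke the rounding procedure of Proposition \ref{prop:rounding} without addressing the fact that its hypothesis $\smat(\hat z)\succeq \varepsilon I$ can \emph{never} be satisfied for the problem as you have set it up. Because the target polynomial vanishes at $x=\1$ and its Hessian there annihilates the Poincar\'e eigenvectors $\phi_i=\cos(2\pi i/n)$, $\psi_i=\sin(2\pi i/n)$ (the eigenvalue $\lambda$ is exactly saturated), every polynomial $q_j$ in any SOS decomposition must satisfy $q_j(\1)=0$ and $\phi^\top\nabla q_j(\1)=\psi^\top\nabla q_j(\1)=0$ (this is the content of Lemma \ref{lemma:soshessian}); hence every feasible Gram matrix over a full monomial basis of $\R[x]_3$ is singular, the SDP has empty interior, and a projected rational (or $\Q[\cos(2\pi/n)]$-rational) point will generically fail to be positive semidefinite. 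You correctly observe the degeneracy, but you treat it only as ``no a priori guarantee that a degree-$3$ certificate exists,'' whereas it is an obstruction to the exact-certification step itself and must be removed before rounding. The paper handles this by facial reduction: Theorem \ref{thm:cyclesosWmax} identifies the necessary face — the Gram basis must be restricted to a subspace $W\subseteq W_{\mathrm{max}}=\{q\in\R[x]_3 : q(\1)=0,\ \phi^\top\nabla q(\1)=\psi^\top\nabla q(\1)=0\}$ — and the search is then carried out over $\Sigma(W)$ (in fact over a smaller invariant $W$ cut out by additional third-derivative conditions to shrink the SDP), after which strict feasibility, and hence the rounding/LDL$^\top$ extraction, becomes possible. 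Without this step your pipeline stalls exactly at the point where a floating-point solution must be turned into an exact certificate.

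Two smaller remarks. First, your justification for rejecting the Pad\'e route is incorrect as stated: the $(m+1,m)$ Pad\'e approximant agrees with $\log t$ at $t=1$ to order $(t-1)^{2m+2}$, so for $m\ge 2$ the function $2t^2\overline{r_m}(t)$ has contact with $t^2\log(t^2)$ of order at least as high as $p^5_{\mathrm{Tay}}$; the paper simply chooses the Taylor bound here because it is a polynomial and suffices, not because Pad\'e contact is too low. Second, the paper's certificate \eqref{eq:cyclesosfeas} drops the orthant constraints $x_i\ge 0$ and works only modulo $\|x\|^2-n$ with an SOS from $W\subset\R[x]_3$; keeping the full $\SOS_3(S^\pi)$ formulation as you propose is not wrong, but it enlarges the SDP you must round exactly, which matters at $n=21$.
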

The proofs are too long to be checked manually, but are made available online, along with the small amount of code required to carry out a verification with the open-source computer algebra system SageMath \cite{sagemath}.

Showing that the log-Sobolev constant of the $n$-cycle is equal to $\lambda/2$ amounts to proving the inequality
\[
\frac{2}{\lambda} \cE(x,x) - \frac{2}{n} \sum_{i=1}^n x_i^2 \log(x_i) \geq 0
\]
for all $x \in \RR^{n}$ satisfying $\norm{x}_\pi^2=\frac{1}{n}\norm{x}^2= 1$. Following the approach described in Section \ref{sec:approach}, we will prove instead a stronger inequality where the logarithmic term $2x_i^2 \log(x_i)$ is upper bounded by its Taylor expansion to order 5, namely
\[
p_\mathrm{Tay}^5(t) = 2(t-1)+3(t-1)^2+\frac{2}{3}(t-1)^3-\frac{1}{6}(t-1)^4+\frac{1}{15}(t-1)^5.
\]
If we define the polynomial
\[
F(x) = \frac{1}{1-\cos(2\pi/n)}\sum_{i\in\Z_n}(x_i - x_{i+1})^2 - \sum_{i=1}^np_\mathrm{Tay}^5(x_i),
\]
our goal is to show that $F(x)$ is a sum-of-squares on the sphere $\{x \in \RR^n : \norm{x}^2 = n\}$. In other words, we want to solve the sum-of-squares feasibility problem:
\begin{equation}
\label{eq:cyclesosfeas}
\exists h \in \RR[x]_{n,4} \; \text{ such that } \; F(x) + \left(\norm{x}^2-n\right)h(x) \in \Sigma\left(W\right)
\end{equation}
where $W \subset \R[x]_{3}$. This can be phrased as a semidefinite feasibility problem and solved numerically much like the examples in Section \ref{sec:numerics}.

\subsection{Facial reduction}

There are two barriers to extracting a proof from a floating-point SDP solution using the rational rounding method described in Subsection \ref{provable}.

Firstly, the data defining the problem are not in general rational, since they include the number $\cos(2\pi/n)$.
However this can be easily remedied by working in the number field $\Q[\cos(2\pi/n)]$ instead.
Secondly, in order to produce formal certificates, we want the sum-of-squares feasibility problem \eqref{eq:cyclesosfeas} to be \emph{strictly} feasible (see Section \ref{provable}). 
Note that Remark \ref{rem:strictfeas} about strict feasibility  does not apply here because the variable $\gamma$ from \eqref{main-hierarchy} is now fixed to be $2/\lambda$.
The next theorem gives necessary conditions on the subspace $W$ so that the feasibility problem \eqref{eq:cyclesosfeas} is strictly feasible.

\begin{restatable}{thm}{cyclesosWmax}\label{thm:cyclesosWmax}
If \eqref{eq:cyclesosfeas} is strictly feasible, then necessarily 
\begin{equation}\label{eq:maximal_subspace}
    W \;\subseteq\; W_\mathrm{max} :=\{q\in\R[x]_{3} \mid q(\1) = 0,\; \phi^\top\nabla q(\1) = 0,\; \psi^\top\nabla q(\1) = 0\}.
\end{equation}
where $\phi,\,\psi\in\R^n$ are the vectors $\phi_i = \cos(2\pi i/n)$, $\psi_i = \sin(2\pi i/n)$.
\end{restatable}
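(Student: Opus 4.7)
The plan is to exploit the fact that strict feasibility of a sum-of-squares problem forces the summands in any certificate to span $W$, and then to show that each such summand must satisfy the three linear constraints defining $W_{\max}$. The ingredient that drives this is that $F$ vanishes at $\1$ and, on the sphere $\{\|x\|^2 = n\}$, vanishes to order higher than two along the two tangent directions $\phi,\psi$.

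First I would record the relevant vanishing of $F$ at $\1$ and compute its first two derivatives. Since each squared difference $(x_i - x_{i+1})^2$ and the Taylor remainder $p_{\text{Tay}}^5(1)$ both vanish there, $F(\1) = 0$; direct differentiation gives $\nabla F(\1) = -2\1$ (which is normal to the sphere) and $\nabla^2 F(\1) = \tfrac{4}{\lambda}(I - K) - 6I$. With Lagrange multiplier $\kappa = -1$ this yields the tangential Hessian form $\tfrac{4}{\lambda} v^\top (I-K) v - 4\|v\|^2$ for $v \perp \1$. Because $\phi,\psi$ are eigenvectors of $I-K$ with eigenvalue $\lambda$, orthogonal to $\1$, and have $\|\phi\|^2 = \|\psi\|^2 = n/2$, this form \emph{vanishes} for $v \in \{\phi,\psi\}$. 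Consequently, along any smooth curve $y(t)$ on $\{\|x\|^2 = n\}$ with $y(0) = \1$ and $y'(0) \in \linspan\{\phi,\psi\}$, we have $F(y(t)) = O(t^3)$.

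Now suppose \eqref{eq:cyclesosfeas} is strictly feasible. Then there exist $q_1,\dots,q_B \in W$ with $\linspan\{q_j\} = W$ and a polynomial $h$ such that
\[
F(x) + (\|x\|^2 - n)\,h(x) \;=\; \sum_{j=1}^B q_j(x)^2,
\]
this being the standard description of relative interior points of $\Sigma(W)$. Substituting $x = \1$ gives $\sum_j q_j(\1)^2 = 0$, hence $q_j(\1) = 0$ for every $j$. Evaluating the identity along the curve $y(t)$ of the previous step, the left-hand side is $O(t^3)$, while the right-hand side expands as $\sum_j \bigl(t\,v^\top \nabla q_j(\1) + O(t^2)\bigr)^2 = t^2 \sum_j (v^\top \nabla q_j(\1))^2 + O(t^3)$; matching the $t^2$ coefficients yields $v^\top \nabla q_j(\1) = 0$ for $v \in \{\phi,\psi\}$ and all $j$. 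Thus each $q_j$ lies in $W_{\max}$, and since the $q_j$ span $W$, we conclude $W \subseteq W_{\max}$.

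The main technical obstacle is the Hessian cancellation $\tfrac{4}{\lambda} v^\top (I-K) v - 4\|v\|^2 = 0$ for $v \in \{\phi,\psi\}$. This is a short spectral computation on the cycle, but it is precisely the place where the saturation $\alpha = \lambda/2$ of the log-Sobolev inequality enters; everything else is a routine Taylor expansion of the SOS identity.
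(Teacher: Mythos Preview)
Your proof is correct and follows essentially the same approach as the paper: both exploit that strict feasibility yields a certificate $F_h = \sum_j q_j^2$ with the $q_j$ spanning $W$, use $F_h(\1)=0$ to force $q_j(\1)=0$, and then use the vanishing of the tangential Hessian in the directions $\phi,\psi$ (via the eigenvalue identity $(I-K)\phi = \lambda\phi$) to force $\phi^\top\nabla q_j(\1)=\psi^\top\nabla q_j(\1)=0$. The only tactical difference is that the paper computes $h(\1)=1$ explicitly and works with $\nabla^2 F_h(\1)$, whereas you restrict to a curve on the sphere so the $h$-term drops out and the Lagrange multiplier $\kappa=-1$ appears through $y''(0)$; these are equivalent.
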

\begin{proof}
For $h \in \RR[x]_{n,4}$ let 
\begin{equation}
\label{eq:Fhcycle}
F_h(x) = F(x) + \left(\norm{x}^2-n\right)h(x).
\end{equation}
The theorem follows from the fact that for any $h$ feasible for \eqref{eq:cyclesosfeas}, $F_h(\1) = 0$ and $\phi^{\top} \nabla^2 F_h(\1) \phi = 0$ and $\psi^{\top} \nabla^2 F_h(\1) \psi = 0$. The details are deferred to Appendix \ref{sec:appcycle}.
\end{proof}


In practice, we used the following choice of subspace $W$, which is a strict subspace of $W_{\max}$:
\begin{equation}
\label{eq:cyclesubspaceW}
    W = \left\{q\in\R[x]_{3}\; \middle\vert \; \begin{array}{l}
    q(\1) = 0,\\
     \phi^\top\nabla q(\1) = 0,\\
     \psi^\top\nabla q(\1) = 0,\\
    \dfrac{d^3q}{dx_idx_jdx_k}(\1) = 0\qquad \forall\; i,\, j,\, k \text{ distinct}
    \end{array}
    \right\}.
\end{equation}
The last condition is introduced in order to reduce the size of the semidefinite program. It demands that when expressed in terms of the variables $\tilde{x}_i=x_i-1$, every term from a polynomial in $W$ involves at most two different variables.
For $n\ge3$ we have $\dim W =\binom{n+3}{3}-\binom{n}{3}-3 = 3n(n+1)/2 - 2$, while $\dim W_\mathrm{max} = \binom{n+3}{3}-3$. When $n=17$ these dimensions are 457 and 1137 respectively.

Given this choice of $W$, we formulate \eqref{eq:cyclesosfeas} as a semidefinite feasibility problem and we use an approach very similar to that given in Section \ref{provable} to obtain an exact feasible point. The only difference is that the computations are done in the number field $\Q[\cos(2\pi/n)]$ instead of $\Q$.
If the obtained Gram matrix is strictly feasible, an $LDL^{\top}$ factorization can be performed to obtain a sum-of-squares certificate for the nonnegativity of $F(x)$, proving that $\alpha = \lambda/2$ for the $n$-cycle.
Such a proof can be readily verified in a computer algebra system, by first squaring and adding together the polynomials in the certificate, and then checking that the result is equal to $F(x)$.

\subsection{Additional practical considerations}
In this subsection, we will describe some further details of the method used to find exact proofs for the $n$-cycle.
Most importantly, we show how to use representation theory-based symmetry reduction techniques, described in \cite{Gatermann2004}, to determine a polynomial basis for $W$ with respect to which the Gram matrix in a sum of squares representation of $F_h(x)$ can be chosen to be block diagonal.
By using such a basis, we are able to find more compact certificates than we would otherwise have done.
The amount of computation required is also reduced, allowing us to search for sum-of-squares proofs for larger $n$ than would otherwise be possible.

\subsubsection{Symmetry reduction}\label{sec:symmred}
The automorphism group of the $n$-cycle is $D_{2n}$, the dihedral group of order $2n$ generated by the cyclic shift $i \in \ZZ_n \mapsto i+1$ and the reflection $i \in \ZZ_n \mapsto -i$.
It is easy to see that the function $F(x)$ is invariant under the action of permuting the indices of the $x_i$ according to elements of $D_{2n}$.
In other words, $F(x)=F(x_{\sigma})$ for each $\sigma\in D_{2n}$, where we use the notation $(x_\sigma)_i:=x_{\sigma(i)}$.
Furthermore, if \eqref{eq:cyclesosfeas} is feasible for some $h \in \R[x]_{4}$, then $h$ can always be chosen to be invariant under this action as well. This can be seen by considering $\frac{1}{2n}\sum_{\sigma\in D_{2n}}h(x_\sigma)$.

The group $D_{2n}$ naturally acts on $\R[x]$ by permuting the variables. The subspace $W$ defined in Equation \eqref{eq:cyclesubspaceW} happens to be an invariant subspace for this action.
By Maschke's theorem, $W$ can be written as a direct sum of irreducible representations of $D_{2n}$. 
Moreover, we can group each of these into isotypic components corresponding to the unique irreducible representation of $D_{2n}$ to which they are equivalent, i.e.,
\begin{equation}\label{eq:maschke}
    W = \bigoplus_{i=1}^{\frac{n+3}{2}}\left(\bigoplus_{j=1}^{m_i} W_{ij}\right).
\end{equation}
The irreducible representations of $D_{2n}$ are well known: for $n$ odd (which is the case of interest to us), $D_{2n}$ has two 1-dimensional irreducible representations and $\bigl(\frac{n-1}{2}\bigr)$ 2-dimensional irreducible representations (so $\frac{n+3}{2}$ in total). A standard analysis allows us to get the multiplicities $m_i$ with which each irreducible representation appears in $W$, and the explicit subspaces $W_{ij}$ in \eqref{eq:maschke}, see Appendix \ref{sec:Wdecompirrep}. In particular each 2-dimensional representation of $D_{2n}$ appears with multiplicity $\frac{3n+3}{2}$ or $\frac{3n+1}{2}$, the trivial 1-dimensional representation appears with multiplicity $n+2$, and the sign 1-dimensional representation appears with multiplicity $\frac{n-1}{2}$.

Let $N = \dim W = 3n(n+1)/2 - 2$ and fix a basis $b(x) = (b_1(x),\ldots,b_{N}(x))$ of $W$ adapted to the decomposition \eqref{eq:maschke}, and such that the action of $D_{2n}$ on $W$ is \emph{orthogonal}. 
By this we mean that for any $\sigma \in D_{2n}$ we can write $b(x_{\sigma}) = P_{\sigma} b(x)$ for some orthogonal matrix $P_{\sigma}$.
Our semidefinite feasibility problem \eqref{eq:cyclesosfeas} can be written as
\begin{equation}
\label{eq:sdpfeascycle}
\text{find $h \in \R[x]^{\text{inv}}_{4}$ and $Q \in \S^{N}_+$ such that } F_h(x) = b(x)^{\top} Q b(x)
\end{equation}
where $\R[x]^{\text{inv}}$ is the ring of polynomials invariant under the action of $D_{2n}$, and $F_h$ is the polynomial defined in \eqref{eq:Fhcycle}. 
Since $F_h$ is invariant under the action of $D_{2n}$, an averaging argument tells us that we can choose $Q$ to satisfy $P_{\sigma}^{\top} Q P_{\sigma} = Q$ for all $\sigma \in D_{2n}$. A standard application of Schur's lemma tells us that such $Q$ must be block diagonal, with one block for each isotypic component. 
With a little care, it can be actually ensured that each of these isotypic blocks of $Q$ is itself block diagonal, with a number of blocks equal to the dimension of the corresponding irreducible representation.
This analysis allows us to reduce the single semidefinite constraint of size $N\times N$ in \eqref{eq:sdpfeascycle}, to smaller block constraints of size $m_i \times m_i$.

\subsection{Outcome}

For the cases $n\in\{5,7,9,\dots,21\}$, we were able to find exact sum-of-squares certificates of \eqref{eq:cyclesosfeas}.
Together with the discussion at the beginning of Section \ref{sec:approach}, these certificates prove that the log-Sobolev constant of the $n$-cycle is half of its spectral gap for odd $n>3$ up to 21.
The decompositions are made available online\footnote{\proofsurl}, along with the Julia code used to generate them and the few lines of SageMath code required to verify them.
The exact computer algebra necessary to convert a numerical SDP solution into a sum-of-squares certificate of \eqref{eq:cyclesosfeas} beyond the case $n=21$ is beyond the capabilities of the computer used to perform the computations for in this paper; however, experiments carried out entirely in floating point arithmetic suggest that \eqref{eq:cyclesosfeas} continues to hold for larger $n$ (at least up to $n=35$).


\section{Discussion}\label{sec:discussion}

In this paper we have presented a computational method, based on semidefinite programming, to estimate the logarithmic Sobolev constant of a Markov kernel on a finite state space. Our method provably converges to the true value of the logarithmic Sobolev constant as the level of the hierarchy $d \to \infty$.
Moreover, we have shown how to extract rigorous certificates of the validity of these bounds.
The accuracy of the method was investigated on several example chains.
Finally, we have used this method to derive the exact logarithmic Sobolev constant for some Markov chains for which it was not previously known (to the best of our knowledge).
We hope this can be used by others as a tool to guide future exploration of logarithmic Sobolev inequalities on discrete spaces.

\paragraph{Implementation} The code necessary to reproduce the results in this paper is made available online\footnote{\repourl}.
It is written in the Julia programming language \cite{julia} and makes use of \texttt{JuMP} \cite{jump}, a popular mathematical optimization package.
\texttt{Nemo}, a computer algebra package for Julia, is used for exact computations in rational numbers and in cyclotomic fields (necessary for the results in Section \ref{sec:cycle}).

\paragraph{Further work}
There are many possible directions for future work.
\begin{itemize}
    \item Unless the Markov kernel under consideration has a large symmetry group, the method we have proposed is generally capable of yielding accurate results on a typical personal computer only for Markov chains whose state space has size $n\lesssim 13$.
    It would of course be of interest to develop a method which remains performant for chains supported on much larger state spaces, or to improve the method presented in this paper in this direction.
    \item The works \cite{boyd2004fastest,sun2006fastest} proposed a semidefinite programming-based algorithm for finding the fastest-mixing Markov process on a graph by maximizing the spectral gap, and compared the resulting optimal chains with popular heuristics such as the maximum-degree method and the Metropolis-Hastings algorithm. The problem of searching for a fastest-mixing process in terms of the logarithmic Sobolev constant was actually suggested in the former paper as a future line of research. 
    The approach described in the present paper not only allows us to lower bound the logarithmic Sobolev constant of a particular chain, but it can also be used to search for fast-mixing Markov processes in the sense of maximizing the logarithmic Sobolev constant. 
    Indeed, given a graph with vertices $\cX$ and edge set $E$, and a target distribution $\pi$, we can consider a modified version of the sum-of-squares programs considered in this paper:
    \[
        \begin{aligned}
            \min \quad \gamma \quad\st \quad& \frac{1}{2}\sum_{i,j\in\cX}\pi_i \widetilde{K}_{ij}(x_i-x_j)^2 - \sum_{i\in\cX}\pi_i p_i(x_i) \in \SOS_d(S^{\pi})\\
            & \widetilde{K}\1 = \gamma\1\\
            & \pi\widetilde{K} = \gamma\pi\\
            &\widetilde{K} \ge 0\\
            & \widetilde{K}_{ij} = 0 \;\forall (i,j)\notin E.
        \end{aligned}
    \]
    This is a sum-of-squares relaxation of the problem of maximizing the log-Sobolev constant $\gamma\inv$ over all Markov kernels $\gamma\inv\widetilde{K}$ supported on $E$ and having stationary distribution $\pi$.
    In the future, it would be interesting to compare the properties of the chains derived in \cite{boyd2004fastest,sun2006fastest} with those obtained from the optimization problem above.
\end{itemize}

\bibliography{bib}{}
\bibliographystyle{plain}

\appendix


\section{Proofs of lemmas for Proposition \ref{feas}}
\label{proofs}

\begin{lemma}
\label{lem:spanbibj}
Let $a \in \RR^n$, and let $b_1(x),\ldots,b_N(x)$ be a basis for the subspace $\{p \in \RR[x]_d : p(a) = 0\}$. Then
\begin{equation}
\label{eq:spanbibj}
\linspan \left\{ b_i(x) b_j(x) : 1\leq i,j \leq N \right\} = \left\{p \in \RR[x]_{2d} : p(a) = 0 \text{ and } \nabla p(a) = 0 \right\}.
\end{equation}
\end{lemma}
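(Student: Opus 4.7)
The plan is to reduce to the shifted monomial basis and then verify both containments by a direct combinatorial argument on multi-indices.

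First I would observe that the span $\linspan\{b_i b_j : 1 \leq i, j \leq N\}$ is independent of the choice of basis $(b_i)$ for $V := \{p \in \RR[x]_d : p(a) = 0\}$: if $(\tilde b_i)$ is another basis with transition matrix $(c_{ik})$, then $\tilde b_i \tilde b_j = \sum_{k,\ell} c_{ik} c_{j\ell} \, b_k b_\ell$, so the two spans coincide. Hence it suffices to prove \eqref{eq:spanbibj} for one convenient basis.

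The convenient choice is the shifted monomial basis. After translating by $a$ and writing $\tilde x := x - a$, the space $V$ is spanned by $\{\tilde x^{\alpha} : 1 \leq |\alpha| \leq d\}$, and the right-hand side of \eqref{eq:spanbibj} is exactly $\linspan\{\tilde x^{\gamma} : 2 \leq |\gamma| \leq 2d\}$, since a polynomial $p \in \RR[x]_{2d}$ satisfies $p(a) = 0$ and $\nabla p(a) = 0$ if and only if its expansion in $\tilde x$ has no constant or linear terms. So the lemma reduces to the claim
\[
\linspan\{\tilde x^{\alpha} \tilde x^{\beta} : 1 \leq |\alpha|, |\beta| \leq d\} = \linspan\{\tilde x^{\gamma} : 2 \leq |\gamma| \leq 2d\}.
\]

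The containment $\subseteq$ is obvious since $|\alpha + \beta| \in [2, 2d]$. For $\supseteq$, I would show that every multi-index $\gamma$ with $2 \leq |\gamma| \leq 2d$ admits a decomposition $\gamma = \alpha + \beta$ with $1 \leq |\alpha|, |\beta| \leq d$: if $|\gamma| \leq d+1$, take any $\alpha$ with $|\alpha| = 1$ and $\alpha \leq \gamma$ coordinatewise; if $|\gamma| > d+1$, peel off coordinates of $\gamma$ one at a time to build $\alpha$ until $|\alpha| = \lfloor |\gamma|/2 \rfloor$, which lies in $[1,d]$, so that $\beta = \gamma - \alpha$ also satisfies $|\beta| \in [1,d]$.

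There is no real obstacle here — the only point requiring care is the splitting argument, which uses both bounds $|\gamma| \geq 2$ and $|\gamma| \leq 2d$ in an essential way. Finally I would translate back from the shifted variable $\tilde x$ to $x$ to conclude.
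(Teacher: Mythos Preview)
Your proof is correct and follows essentially the same approach as the paper: reduce to $a=0$ (equivalently, work in shifted coordinates), identify the right-hand side with polynomials having no constant or linear term, and then split each monomial $x^{\gamma}$ with $2 \le |\gamma| \le 2d$ as $x^{\alpha} x^{\beta}$ with $1 \le |\alpha|,|\beta| \le d$. The only cosmetic difference is that the paper expands $x^{\alpha}$ and $x^{\beta}$ directly in the given basis rather than first arguing basis-independence, but the content is the same.
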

\begin{proof}
It is immediate to verify that the inclusion $\subseteq$ holds in \eqref{eq:spanbibj}. To prove the reverse inclusion, we assume, without loss of generality that $a=0$. In this case, the right-hand side corresponds to degree $2d$ polynomials with no constant or linear monomials. Thus it suffices to show that any monomial $x^{\gamma}$ where $2\leq |\gamma| \leq 2d$ is in $\linspan \left\{ b_i(x) b_j(x) : 1\leq i,j \leq N \right\}$. We can write $x^{\gamma} = x^{\alpha} x^{\beta}$ where $1\leq |\alpha|,|\beta|\leq d$, and so, by assumption $x^{\alpha} = \sum_{i} \lambda_i b_i$ and $x^{\beta} = \sum_{i} \mu_i b_i$ so that $x^{\gamma} = \sum_{ij} \lambda_i \mu_j b_i b_j$ is a linear combination of the $b_i b_j$.
\end{proof}

\begin{lemma}
\label{lem:sosdecompVarpi}
Let $b_1(x),\ldots,b_N(x)$ be a basis of the subspace of polynomials
\begin{equation}
\label{eq:subspacep1=0}
V_d = \{p \in \RR[x]_{d} : p(\1) = 0\}.
\end{equation}
Let $\pi_1,\ldots,\pi_n > 0$ such that $\sum_{i=1}^n \pi_i = 1$.
Then there exists a positive \emph{definite} matrix $A$ of size $N\times N$ such that
\begin{equation}
\label{eq:Qpdef}
\left(\sum_{d'=0}^{d-1}\norm{x}_\pi^{2d'}\right) \norm{x-\1}_\pi^2  = [b_i(x)]^T A [b_i(x)].
\end{equation}
\end{lemma}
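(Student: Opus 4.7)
The plan is to exhibit an explicit sum-of-squares decomposition of the left-hand side in which every squared polynomial lies in $V_d$, then show these polynomials span $V_d$; standard linear algebra then forces the associated Gram matrix to be positive definite. This mirrors the flavour of the (commented) Proposition~\ref{prop:relint}: a Gram matrix built from a spanning family (with positive weights) is automatically positive definite.

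\textbf{Step 1 (explicit decomposition).} Expanding the multinomial gives
\[
\norm{x}_\pi^{2d'} = \Bigl(\sum_i \pi_i x_i^2\Bigr)^{d'} = \sum_{i_1,\dots,i_{d'}} \pi_{i_1}\cdots \pi_{i_{d'}}\, (x_{i_1}\cdots x_{i_{d'}})^2,
\]
and multiplying by $\norm{x-\1}_\pi^2 = \sum_j \pi_j (x_j-1)^2$ yields
\[
\norm{x}_\pi^{2d'}\norm{x-\1}_\pi^2 \;=\; \sum_{j}\sum_{i_1,\dots,i_{d'}} \pi_j \pi_{i_1}\cdots \pi_{i_{d'}}\, \bigl(x_{i_1}\cdots x_{i_{d'}}(x_j-1)\bigr)^2.
\]
Summing over $d'=0,1,\dots,d-1$ expresses the left-hand side as a nonnegative combination of squares of the polynomials $q_{d',\vec i, j}(x):= x_{i_1}\cdots x_{i_{d'}}(x_j-1)$, each of which has degree $d'+1 \le d$ and vanishes at $\1$, hence lies in $V_d$.

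\textbf{Step 2 (the $q_{d',\vec i, j}$ span $V_d$).} Since $\{x^\alpha - 1 : 1\le |\alpha| \le d\}$ spans $V_d$, it suffices to show that each $x^\alpha - 1$ lies in $\operatorname{span}\{q_{d',\vec i, j}\}$. I would induct on $|\alpha|$: for $|\alpha|=1$ we have $x_i-1 = q_{0,\emptyset,i}$; for $|\alpha|=k\ge 2$, write $x^\alpha = x_{i_1}\cdots x_{i_k}$ and $m = x_{i_1}\cdots x_{i_{k-1}}$, so
\[
x^\alpha - 1 \;=\; m(x_{i_k}-1) + (m-1) \;=\; q_{k-1,(i_1,\dots,i_{k-1}),i_k} + (m-1),
\]
where $m-1$ lies in the span by the inductive hypothesis.

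\textbf{Step 3 (positive definiteness).} Writing each $q_{d',\vec i, j} = v_{d',\vec i, j}^\top b(x)$ in the basis $b$ and collecting terms gives an identity LHS $= b(x)^\top A b(x)$ with $A = \sum c_{d',\vec i, j}\, v_{d',\vec i, j} v_{d',\vec i, j}^\top$ and $c_{d',\vec i, j} = \pi_j \pi_{i_1}\cdots \pi_{i_{d'}} > 0$. For any $u\in\R^N\setminus\{0\}$,
\[
u^\top A u \;=\; \sum c_{d',\vec i, j}\,(v_{d',\vec i, j}^\top u)^2 \;\ge\; 0,
\]
vanishing only if every $v_{d',\vec i, j}^\top u = 0$. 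Since the $q_{d',\vec i, j}$ span $V_d$ by Step 2, their coordinate vectors $v_{d',\vec i, j}$ span $\R^N$, forcing $u=0$. Hence $A\pd 0$, as required.

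The only delicate point is ensuring that the \emph{naturally arising} squared polynomials in Step 1 are numerous enough to span $V_d$; the inductive argument of Step 2 shows that the polynomials $x_{i_1}\cdots x_{i_{d'}}(x_j-1)$ for $0\le d' \le d-1$ already suffice, so no additional square terms need to be added by hand.
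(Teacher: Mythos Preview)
Your proof is correct and takes essentially the same approach as the paper: both exhibit the same explicit sum-of-squares decomposition of the left-hand side into squares of polynomials of the form $(\text{monomial})\cdot(x_j-1)$, then argue these polynomials span $V_d$ so that the resulting Gram matrix is positive definite. The only cosmetic difference is the spanning argument: you induct on $|\alpha|$ using the telescoping identity $x^{\alpha}-1 = m(x_{i_k}-1) + (m-1)$ for a chosen basis of $V_d$, whereas the paper inducts on $d$ by writing an arbitrary $p\in V_d$ as $\sum_i(x_i-1)p_i + r$ with $r\in V_{d-1}$; these are two phrasings of the same idea.
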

\begin{proof}
The existence of a positive \emph{semidefinite} matrix $A$ that satisfies \eqref{eq:Qpdef} is straightforward because the left-hand side is a sum of squares.

The main point of the lemma is that one can choose $A$ to be positive definite. Note that it suffices to prove the lemma for one choice of spanning set of \eqref{eq:subspacep1=0}, say $\sigma_1,\ldots,\sigma_M$ with $M \geq N$. This is because if $b_1,\ldots,b_N$ is a basis of \eqref{eq:subspacep1=0} then we can write $[\sigma_i(x)] = R [b_i(x)]$ where $R$ is a $M\times N$ matrix with $\rank(R) = N$, and so $[\sigma_i(x)]^T A [\sigma_i(x)] = [b_i(x)]^T R^T A R [b_i(x)]$ where $R^T A R \pd 0$.

    For each $d'>0$ define the collection of polynomials indexed by $\alpha\in [n]^{d'}$,
    \[\sigma^{d'}_\alpha(x) = (\pi_{\alpha_1}\dots\pi_{\alpha_{d'}})^{\frac{1}{2}}(x_{\alpha_1}-1)x_{\alpha_2}\dots x_{\alpha_{d'}}.\]
    Clearly, we have 
    \[\norm{x-\1}_\pi^2 \norm{x}_\pi^{2d'} = \sum_{\alpha\in [n]^{d'}}\sigma^{d'}_\alpha(x)^2.\]
    To prove the lemma, we need to show that the collection of $(\sigma^{d'}_{\alpha})_{d'=1}^d$ spans the subspace $V_d = \{p \in \RR[x]_{d} : p(\1) = 0\}$.
    We prove this by induction. The base case $d=1$ is certainly true, since $\{x_1-1\dots,x_n-1\}$ is a basis for $V_1$.
    Let $p\in V_d$ for $d>1$. We can always write $p(x)=x_1p_1(x) + \dots x_np_n(x) + p_0$, where $p_0$ is a constant and $p_i\in\R[x]_{d-1}$ for $i\in[n]$. We then rewrite $p$ as
    \[p(x) = \underbrace{\sum_{i=1}^n (x_i-1)p_i(x)}_{q(x)} + \underbrace{\sum_{i=0}^n p_i(x)}_{r(x)}.\]
    Note that $r(\1)=p(\1)=0$, so we have $r\in V_{d-1}$. Using the induction hypothesis, $r\in\bigoplus_{d'=1}^{d-1} \operatorname{span}\{ \sigma^{d'}_{\alpha}\}$.
    On the other hand, $q$ is a sum of terms of the form $(x_i-1)x^\alpha$ where $x^\alpha$ is a monomial of degree at most $d-1$, i.e. $q\in\bigoplus_{d'=1}^{d} \operatorname{span} \{ \sigma^{d'}_{\alpha}\}$.
    Therefore, $p\in\bigoplus_{d'=1}^{d} \operatorname{span} \{ \sigma^{d'}_{\alpha}\}$.
\end{proof}

\section{Proof of Proposition \ref{prop:rounding} on rounding}
\label{sec:proofrounding}

\begin{proof}[Proof of Proposition \ref{prop:rounding}]
Let $\hat{Z},\,Z^*$ be the symmetric matrices corresponding to $\hat{z},\,z^*$. By assumption we know that $\lambdamin(\hat{Z}) \geq \varepsilon$. We need to prove that $\lambdamin(Z^*) \geq 0$. To do so it suffices to show that $\|\hat{Z} - Z^*\|_{\sigma} \leq \varepsilon$, where $\|\cdot \|_{\sigma}$ denotes the spectral norm.
We have the following sequence of inequalities, where $\|\cdot\|_F$ is the Frobenius norm:
\[
    \begin{aligned}
        \norm{\hat{Z} - Z^*}_\sigma &\leq \norm{\hat{Z} - Z^*}_F\\
        & \leq \sqrt{2}\norm{\begin{pmatrix}\hat{y}-y^*\\\hat{z}-z^*\end{pmatrix}}_2 \\
        &= \sqrt{2}\norm{\begin{pmatrix}\hat{y}\\\hat{z}\end{pmatrix} - \Pi_\mathcal{A}\begin{pmatrix}\hat{y}\\\hat{z}\end{pmatrix}}_2\\
        &= \sqrt{2}\norm{A^\top\left(AA^\top\right)\inv\left(A\begin{pmatrix}\hat{y}\\\hat{z}\end{pmatrix}-b\right)}_2\\
        &\leq \sqrt{2}\norm{A^\top\left(AA^\top\right)\inv}_\sigma\norm{A\begin{pmatrix}\hat{y}\\\hat{z}\end{pmatrix}-b}_2 \\
        &\leq \sqrt{\dfrac{2r}{\lambda_\mathrm{min}\left(AA^\top\right)}}\left(1+\norm{b}_\infty\right)\varepsilon_\mathrm{aff} = \varepsilon.
    \end{aligned}
\]
The last inequality follows by observing that 
\[\norm{A^\top\left(AA^\top\right)\inv}_\sigma = \sqrt{\lambda_\mathrm{max}\bigl((AA^\top)\inv AA^\top(AA^\top)\inv\bigr)}=\sqrt{\lambda_\mathrm{max}((AA^\top)^{-1})}=1/\sqrt{\lambdamin(AA^\top)}\;,\]
and that
\[\|A\begin{sm} \hat{y}\\ \hat{z}\end{sm} - b\|_{2} \le \sqrt{r}\|A\begin{sm} \hat{y}\\ \hat{z}\end{sm} - b\|_{\infty}  \le \sqrt{r}\left(1+\norm{b}_\infty\right)\varepsilon_\mathrm{aff}.\]
This completes the proof.
\end{proof}

\section{The cycle graph}
\label{sec:appcycle}

\subsection{Proof of Theorem \ref{thm:cyclesosWmax}}

If \eqref{eq:cyclesosfeas} is strictly feasible then there is $h \in \R[x]_{4}$ and a basis $q_1,\ldots,q_N$ of $W$ such that
\begin{equation}
\label{eq:Fhsos}
F_h(x) := F(x) + \left(\norm{x}^2-n\right) h(x) = \sum_{j=1}^{N} q_j(x)^2.
\end{equation}
(Indeed, one can check that the interior of $\Sigma(W)$ is the precisely the set of polynomials that can be written as $\sum_i q_i^2$ where $q_i$ forms a basis of $W$.)
We recall for convenience
\[
F(x) = \frac{2}{\lambda}\sum_{i\in\Z_n}(x_i - x_{i+1})^2 - \sum_{i=1}^np(x_i),
\]
where $\lambda = (1-\cos(2\pi/n))/2$ and $p(t) = 2(t-1)+3(t-1)^2+\frac{2}{3}(t-1)^3-\frac{1}{6}(t-1)^4+\frac{1}{15}(t-1)^5$ is the Taylor expansion of $2t^2 \log(t)$ to order 5.

Observe that $F_h(\1) = 0$, and so this implies that $q_j(\1) = 0$ for all $j=1,\ldots,N$. If we differentiate the identity \eqref{eq:Fhsos} we get $\nabla F_h(\1) = 0$ i.e., $- 2\1 + 2h(\1) \1 = 0$ which implies $h(\1) = 1$. Furthermore
\[
\nabla^2 F_h(\1) = \frac{4}{\lambda}(I-K) - 6I + 2 h(\1)I + \nabla h(\1) \1^{\top} + \1 \nabla h(\1)^{\top}
\]
where $K$ is the transition matrix of the simple walk on the $n$-cycle. The vectors $\phi = (\cos(2\pi i/n))_{1\leq i \leq n}$ and $\psi = (\sin(2\pi i/n))_{1\leq i \leq n}$ are eigenvectors of $(I-K)$ with eigenvalue $\lambda$, and are both orthogonal to $\1$, so we have that $\phi^\top \nabla^2 F_h(\1)\phi=\psi^\top \nabla^2 F_h(\1)\psi=0$. Using Lemma \ref{lemma:soshessian} (to follow) this implies that $\phi^{\top} \nabla q_j(\1) = \psi^{\top} \nabla q_j(\1) = 0$.

We have shown that the $q_j$ must all satisfy the linear relations
\[
q_j(\1) = 0, \; \phi^{\top} \nabla q_j(\1) = \psi^{\top} \nabla q_j(\1) = 0 \;\; \forall j=1,\ldots,N.
\]
Since by assumption the $(q_j)$ span the subspace $W$ we get the desired inclusion \eqref{eq:maximal_subspace}.

It remains to prove the lemma below.

\begin{lemma}
\label{lemma:soshessian}
    Let $F(x)\in\R[x]_{2d}$ with $F(\1)=0$ and suppose that $F$ is a sum of squares of polynomials:
    \[F(x)=\sum_{j=1}^N q_j(x)^2.\]
    Let $\nabla^2F(x)$ be the Hessian matrix of second derivatives of $F$.
    Then for any $v\in\ker \nabla^2 F(\1)$, $1\le j\le N$ we have 
    \[v^\top\nabla q_j(\1) = 0.\]
\end{lemma}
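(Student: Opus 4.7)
The plan is to exploit the fact that $\1$ must be a global minimum of $F$, and then compute $\nabla^2 F(\1)$ explicitly in terms of the $q_j$.

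First, since $F = \sum_j q_j^2 \geq 0$ everywhere and $F(\1) = 0$, the point $\1$ is a global minimizer of $F$, and moreover $q_j(\1) = 0$ for every $j$. Next, I would compute the Hessian at $\1$ by differentiating $F = \sum_j q_j^2$ twice: in general $\nabla^2(q_j^2) = 2 \nabla q_j \nabla q_j^\top + 2 q_j \nabla^2 q_j$, so using $q_j(\1) = 0$ the second term drops out and
\[
\nabla^2 F(\1) = 2 \sum_{j=1}^N \nabla q_j(\1)\,\nabla q_j(\1)^\top.
\]

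Now for any $v \in \ker \nabla^2 F(\1)$ this gives
\[
0 = v^\top \nabla^2 F(\1)\, v = 2 \sum_{j=1}^N \bigl(v^\top \nabla q_j(\1)\bigr)^2,
\]
and since each term in the sum is a nonnegative real number, each must vanish, i.e. $v^\top \nabla q_j(\1) = 0$ for all $j$, as desired.

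There is really no main obstacle: the entire argument is the standard observation that at a zero of a sum of squares the gradients of the summands span the range of the Hessian. The only thing to be careful about is justifying that $q_j(\1) = 0$ (immediate from $\sum_j q_j(\1)^2 = F(\1) = 0$) so that the cross term $q_j \nabla^2 q_j$ drops out of the Hessian computation.
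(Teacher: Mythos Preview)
Your proof is correct and essentially the same as the paper's: both establish $q_j(\1)=0$ from $F(\1)=0$, compute $v^\top \nabla^2 F(\1) v = 2\sum_j (v^\top \nabla q_j(\1))^2$, and conclude that each summand vanishes. The only cosmetic difference is that the paper reaches this identity by differentiating the one-variable function $t\mapsto F(\1+tv)$ twice rather than writing down the Hessian formula $\nabla^2(q_j^2) = 2\nabla q_j \nabla q_j^\top + 2 q_j \nabla^2 q_j$ directly.
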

\begin{proof}
    For a real variable $t$ define the function $f(t)=F(\1+tv)$.
    One the one hand, 
    \[f''(0) = \frac{d}{dt}\left(v^\top\nabla F(\1+tv)\right)\vert_{t=0} = v^\top \nabla^2F(\1)v = 0.\]
    On the other,
    \[ 
    \begin{aligned}
        f''(0) &= \sum_{j=1}^N\frac{d^2}{dt^2}q_j^2(\1+tv)\vert_{t=0} \\
        &= 2\sum_{j=1}^N\left(\left(\frac{d}{dt}q_j(\1+tv)\vert_{t=0}\right)^2 + q_j(\1)\frac{d^2}{dt^2}q_j(\1+tv)\vert_{t=0}\right)\\
        &=2\sum_{j=1}^N\left(\frac{d}{dt}q_j(\1+tv)\vert_{t=0}\right)^2\\
        &=2\sum_{j=1}^N\left(v^\top\nabla q_j(\1)\right)^2.
    \end{aligned}
    \]
    This implies $v^{\top} \nabla q_j(\1) = 0$ for all $1\leq j\leq N$ as desired.
\end{proof}
This completes the proof of Theorem \ref{thm:cyclesosWmax}.

\subsection{Decomposition of the subspace $W$ into irreducibles}
\label{sec:Wdecompirrep}

In order to decompose $W$ (defined in \eqref{eq:cyclesubspaceW}) into irreducible representations, it will be helpful to first consider an enlarged space $\widetilde{W}$.
This space is defined as $\widetilde{W}=\{q\in\R[x]_{3}\mid\frac{d^3q}{dx_idx_jdx_k}(\1)=0\quad\forall\; i,j,k\text{ distinct}\}$.
Note that $W=\widetilde{W}\cap W_\mathrm{max}$.
In the rest of this section, we will work in the more convenient variables $\tilde{x}_i:=x_i-1$. From this point of view, $\widetilde{W}$ has a nice interpretation as the space of degree 3 polynomials, no term of which involves 3 different variables.

There is an obvious way to coarsely decompose $\widetilde{W}$ into invariant subspaces:
\begin{enumerate}[(1)]
    \item $\operatorname{span}\{1\}$, the space of constant polynomials,
    \item $\operatorname{span}\{\tilde{x}_1, \dots, \tilde{x}_n\}$, $\operatorname{span}\{\tilde{x}_1^2, \dots, \tilde{x}_n^2\}$, and $\operatorname{span}\{\tilde{x}_1^3, \dots, \tilde{x}_n^3\}$ are invariant subspaces which each induce a copy of the permutation representation.
    Moreover, for  $\Delta\in\{1,\dots, \frac{n-1}{2}\}$, the spaces \[
        \operatorname{span}\{\tilde{x}_i\tilde{x}_j\mid i,j\in\{1,\dots,n\},\; d_n(i,\,j)=\Delta\}\] are also invariant subspaces equivalent to the permutation representation via the isomorphism
        \[\tilde{x}_i\tilde{x}_j\mapsto\begin{cases} \bm{e}_{i+\frac{\Delta}{2}\pmod n}&\text{if }\Delta\text{ even}\\ \bm{e}_{i+\frac{\Delta+n}{2}\pmod n}&\text{if }\Delta\text{ odd}.\end{cases}\]
    Here $d_n(i,j):=\min\{\vert i-k\vert\st k\cong j\pmod n\}$.
    \item For $\Delta\in\{1,\dots, \frac{n-1}{2}\}$, the spaces 
    \[\operatorname{span}\{\tilde{x}_i^2\tilde{x}_j\mid i, j\in\{1,\dots,n\},\; d_n(i,\,j)=\Delta\}\] 
    are invariant subspaces which each induce a copy of the regular representation.
    Indeed, writing $D_{2n}=\langle r, s \mid r^n = s^2 = \id,\, srs=r\inv\rangle$, the map
    \[\tilde{x}^2_i\tilde{x}_j\mapsto\begin{cases}\bm{e}_{r^j}&j\equiv i+\Delta\pmod n\\ \bm{e}_{r^js} & j\equiv i - \Delta\pmod n\end{cases}\]
    is an isomorphism which respects the action of $D_{2n}$.
\end{enumerate}
In order to find a full decomposition of $\widetilde{W}$ into irreducible representations as in \eqref{eq:maschke}, it remains only to decompose the regular and permutation representations of $D_{2n}$ into irreducibles.
We omit the details (this is standard material in representation theory) except to say that the resulting symmetry-adapted basis $\tilde{b}(\tilde{x})$ for $\widetilde{W}$ can be chosen so that each basis element is a polynomial whose coefficients lie in $\Q[\cos(2\pi/n)]$.
It may be necessary to rescale certain basis elements by a constant factor outside of this field, e.g., the coefficients of $\sum_i\psi_i\tilde{x}_i = \sum_i\sin(2\pi i/n)\tilde{x}_i$ are not in $\Q[\cos(2\pi/n)]$, but those of $\sin(2\pi/n)\cdot(\sum_i\psi_i\tilde{x}_i)$ are.

We now aim to obtain a decomposition of $W$ into irreducible representations.
It is easily verified that $\operatorname{span}\{\sum_i\phi_i\tilde{x}_i,\;\sum_i\psi_i\tilde{x}_i\}$ is an invariant subspace of $\operatorname{span}\{\tilde{x}_1, \dots, \tilde{x}_n\}$.
In fact, the span of these two polynomials is irreducible, and not equivalent to any other subrepresentation of $\operatorname{span}\{\tilde{x}_1, \dots, \tilde{x}_n\}$.
It follows that both $\sum_i\phi_i\tilde{x}_i$ and $\sum_i\psi_i\tilde{x}_i$ are elements of $\tilde{b}(\tilde{x})$, the basis of $\widetilde{W}$ obtained from the procedure sketched out above.
Therefore a symmetry-adaped basis for $W=\widetilde{W}\cap{W_\mathrm{max}}$ can be obtained by dropping from $\tilde{b}(\tilde{x})$ the polynomials $\{\sum_i\phi_i\tilde{x}_i$ and $\sum_i\psi_i\tilde{x}_i\}$, as well as the constant polynomial.

\end{document}